\newtheorem{theorem}{Theorem}
\newtheorem{lemma}[theorem]{Lemma}
\newtheorem{corollary}[theorem]{Corollary}
\newtheorem{proposition}[theorem]{Proposition}
\newtheorem{definition}[theorem]{Definition}
\newtheorem{example}[theorem]{Example}
\newtheorem{remark}[theorem]{Remark}
\newtheorem{fact}[theorem]{Fact}
\definecolor{e-mail}{rgb}{0,.40,.80}
\definecolor{reference}{rgb}{.20,.60,.22}
\definecolor{citation}{rgb}{0,.40,.80}
\def\namedlabel#1#2{\begingroup
    #2%
    \def\@currentlabel{#2}%
    \phantomsection\label{#1}\endgroup
}
\definecolor{answer}{rgb}{1,0,0}
\numberwithin{equation}{section}
\newcommand{\N}{{\mathbb N}}
\newcommand{\Q}{{\mathbb Q}}
\def \dd {\partial}
\def \D {\Delta}
\def \Nn {\N^m\times {\mathfrak n}}
\def \M {\mathcal M}
\def \L {\mathfrak L}
\def \n {\mathfrak n}
\def \LUB {\operatorname{LUB}}
\def \l {\langle}
\def \r {\rangle}
\def \ord {\operatorname{ord}}
\definecolor{todo}{rgb}{0,0,1}
\begin{document}

\begin{frontmatter}

\title{Effective bounds for the consistency of differential equations}

\author{Richard Gustavson\fnref{RG}}
\address{Manhattan College\\
Department of Mathematics\\
4513 Manhattan College Parkway\\
Riverdale, NY 10471}
\ead{rgustavson01@manhattan.edu}

\author{Omar Le\'on S\'anchez}
\address{University of Manchester\\
School of Mathematics\\
Oxford Road \\
Manchester, M13 9PL}
\ead{omar.sanchez@manchester.ac.uk}

\begin{abstract}
One method to determine whether or not a system of partial differential equations is consistent is to attempt to construct a solution using merely the ``algebraic data" associated to the system. In technical terms, this translates to the problem of determining the existence of regular realizations of differential kernels via their possible prolongations. In this paper we effectively compute an improved upper bound for the number of prolongations needed to guarantee the existence of such realizations, which ultimately produces solutions to many types of systems of partial differential equations. This bound has several applications, including an improved upper bound for the order of characteristic sets of prime differential ideals. We obtain our upper bound by proving a new result on the growth of the Hilbert-Samuel function, which may be of independent interest. 
\end{abstract}

\begin{keyword}
algebraic differential equations, antichain sequences, Hilbert-Samuel function \MSC[2010] 12H05, 14Q20, 35G50
\end{keyword}
\fntext[RG]{R. Gustavson was supported by NSF grants CCF-0952591, CCF-1563942, and DMS-1413859.}

\end{frontmatter}

\tableofcontents

\section{Introduction}
In this paper we study techniques that effectively determine if a given system of algebraic partial differential equations is consistent; that is, if the system has a solution in a differential field extension of the ground differential field in which the coefficients of the system live.  Our approach is to study the set of algebraic solutions of a given system of algebraic differential equations (viewed as a purely algebraic system), and then determine if an algebraic solution can be used to construct a differential solution.  This construction is not always possible, as evidenced by very basic examples such as the following:
\begin{equation}\label{motivating}
\begin{cases} \partial_1 u = u \\ \partial_2 u = 1 \end{cases}
\end{equation}
where $u$ is a differential indeterminate over some ground differential field with two commuting derivations $\partial_1$ and $\partial_2$. If we consider the associated algebraic system obtained by replacing $u$, $\partial_1u$, and $\partial_2u$ with algebraic indeterminates $x$, $z_1$, and $z_2$, respectively, we obtain
$$
\begin{cases} z_1 = x \\ z_2 = 1, \end{cases}
$$
which has a solution. 
However, the differential system (\ref{motivating}) is inconsistent, since the existence of a differential solution $a$ in some differential field would imply $1=\partial_2 \partial_1 a=\partial_1\partial_2 a=0 $. It is important to note that the inconsistency of the system becomes apparent after differentiating the system once. The number of differentiations needed to reveal that a given system is inconsistent is the main motivation of this paper.  Furthermore, we seek to effectively determine this number from data obtained from the equations (their order and the number of derivations and indeterminates).

To make the above discussion more precise, we study \emph{differential kernels}, which are field extensions of the ground differential field $(K,\partial_1,\dots,\partial_m)$ obtained by adjoining a solution of the associated algebraic system such that this solution serves as a means to ``prolong'' the derivations from $K$ (see Definition~\ref{defdiffker} for the precise definition of differential kernels).  Differential kernels in a single derivation were studied by Cohn \citep{Cohn} and Lando \citep{Lando}. In Section \ref{diffkerpre}, we consider differential kernels with an arbitrary number of commuting derivations.  A differential kernel is said to have a \emph{regular realization} if there is a differential field extension of $K$ containing the differential kernel and such that the generators of the kernel form the sequence of derivatives of the generators of order zero. The key observation is that a differential kernel has a regular realization if and only if the chosen solution of the 
associated algebraic system (i.e., the generators of the differential kernel) can be prolonged to yield a differential solution to the original system of differential equations. Thus, the problem of determining the consistency of a given system of differential equations is equivalent to the problem of determining the existence of regular realizations of a given differential kernel.  In a single derivation, every differential kernel has a regular realization \citep[Proposition 3]{Lando}. However, this is no longer the case with more than one derivation, as evidenced by the system \eqref{motivating} above, which is also discussed in Example \ref{noprolon} below.

The first analysis of differential kernels with several commuting derivations appears in the work of Pierce \citep{Pierce}, using different terminology (there a differential kernel is referred to as a field extension satisfying the \emph{differential condition}). In that paper it is shown that if a differential kernel has a \emph{prolongation} of a certain length (that is, we can extend the derivations from the algebraic solution some finite number of times), then it has a regular realization; see Theorem \ref{Pierce410} below.  We note here that even if a differential kernel has a proper prolongation, this is no guarantee that a regular realization will exist, as evidenced by Example \ref{examr} below.  We denote by $T_{r,m}^n$ the smallest prolongation length that guarantees the existence of a regular realization of any differential kernel of length $r$ in $n$ differential indeterminates over any differential field of characteristic zero with $m$ commuting derivations; see Definition \ref{thebignumber}. 
Note that this number only depends on the data $(r,m,n)$; in particular, it does \emph{not} depend on the degree of the algebraic system associated to the differential kernel.  A recursive construction of an upper bound for $T_{r,m}^n$ was provided in \citep[\S3]{LeOv}; unfortunately, this upper bound is unwieldy from a computational standpoint even when $m=2$ or $3$.

In this paper, we provide a new and improved upper bound for $T_{r,m}^n$. This new upper bound is given in Theorem \ref{construct} by the number $C_{r,m}^n$, which we introduce in Section \ref{expri}. The central idea for the construction of $C_{r,m}^n$ comes from weakening a condition imposed on what are called the minimal leaders of a differential kernel that guarantees the existence of a regular realization (compare conditions \eqref{condition1} and \eqref{condition2}). In further sections we show that there is a recursive algorithm that computes the value of the integer $C_{r,m}^n$. This is a nontrivial task, as we need to develop a series of new combinatorial results in order to complete the proof. In Section \ref{Macaulay}, we prove the main combinatorial result of the paper, Theorem \ref{strictmacaulay}. This theorem is a strengthening of Macaulay's theorem on the growth of the Hilbert-Samuel function when applied to \emph{connected antichain sequences} of $\N^m$ (see Definition \ref{connected}).  We then use a consequence of this combinatorial result, namely Corollary \ref{HSdeg}, in Section \ref{algorithm} to show that the integer $C_{r,m}^n$ can be expressed in terms of the maximal length of certain antichain sequences (see Theorem~\ref{UpperBound2}). At this point, 
we use the results from \citep[\S3]{LeOv} to derive an algorithm that computes the number $C_{r,m}^n$.  

This new upper bound $C_{r,m}^n$ of $T_{r,m}^n$ allows us to produce specific, computationally viable upper bounds for a small numbers of derivations (for example, one, two, or three derivations), which the previously known bound does not produce. At the end of Section \ref{diffkerpre} we provide some concrete computations to show how our new upper bound compares with what was previously known. For instance, our bound produces 
$$T_{r,2}^n\leq 2^nr \quad \text{ and }\quad T_{r,3}^1\leq 3(2^r-1),$$ 
which, surprisingly, was not known previously.

Having an effective bound for determining the existence of a regular realization of a differential kernel has several applications in computational differential algebra. In fact, these applications were our motivation to study differential kernels. We consider some of these in Section~\ref{applications}; namely:
\begin{enumerate}
 \item The bound $C_{r,m}^n$ produces an upper bound for the order of elements of a characteristic set (with respect to the canonical orderly ranking) of each minimal prime differential ideal containing a given collection of differential polynomials, answering a question first posed by Seidenberg in \citep{Seidenberg} and improving upon the bound given in \citep{Kondratieva}. An additional important feature of this new bound is that, in contrast with the one found in \citep{Kondratieva}, it does \emph{not} depend on the degrees of the given collection of differential polynomials; in fact, merely the existence of such a bound with no assumption on the degrees seems to be a new (and nontrivial) result. 
 \item The bound $C_{r,m}^n$ also produces an upper bound for the order of each irreducible component of finite order of a differential algebraic variety. This extends a well known result of Ritt \citep[Chapter 6]{Ritt} to the case of several commuting derivations. Again, we note that this bound does \emph{not} depend on the degrees of the defining differential polynomials.
 \item The number $T_{r,m}^n$ is used to determine an upper bound for the effective differential Nullstellensatz, which allows for the implementation of an algorithm that can check whether a given system of algebraic differential equations is consistent or not.  This problem was also first introduced by Seidenberg in \citep{Seidenberg}, with improvements in \citep{DJS, GKOS, Grigoriev}.  The current optimal upper bound is given in \citep{GKO} in terms of $T^n_{r,m}$, and our results show that for $m=2$ or $3$ this upper bound is computationally feasible.
 \item The number $T_{r,m}^n$ is also used in \citep{FreLe} to determine an upper bound for the degree of the Zariski closure of an affine differential algebraic variety. Our results show that for $m=2$ this bound is triple-exponential in $n$ and $r$.
\end{enumerate}

\medskip

\noindent {\bf Acknowledgments.} The authors would like to thank two anonymous referees for their detailed comments and suggestions, which led to improvements of a previous version of this manuscript.

\section{Differential kernels and preliminaries}\label{diffkerpre}

We work over a fixed differential field $(K,\D)$ of characteristic zero with $m$ commuting derivations $\Delta = \{\partial_1,\dots,\partial_m\}$.  Fix a postive integer $n$. We are interested in field extensions of $K$ whose generators over $K$ are indexed by elements of $\N^m\times {\n}$, where $\N=\{0,1,2,\dots\}$ and ${\n} = \{1,\dots,n\}$.  To do so, we introduce the following terminology: Given an element $\xi = (u_1,\dots,u_m) \in \N^m$, we define the {\it degree} of $\xi$ to be 
$$
\deg \xi = u_1 + \cdots + u_m.
$$
If $\alpha=(\xi,i)\in \Nn$, we set $\deg\alpha=\deg \xi$. For any $r \in \N$, we let 
$$
\Gamma(r) = \{\alpha \in \Nn : \deg \alpha \leq r\}.
$$

We will consider two different orders $\leq$ and $\unlhd$ on $\N^m \times {\n}$.  Given two elements $\alpha=(\xi,i)$ and $\beta=(\tau,j)$ of $\Nn$, we set $\alpha \leq \beta$ if and only if $i = j$ and $\xi \leq \tau$ in the product order of $\N^m$. On the other hand, if $\xi=(u_1,\dots,u_m)$ and $\tau=(v_1,\dots,v_m)$, we set $(\xi,i) \unlhd (\tau,j)$ if and only if 
$$
(\deg \xi,i,u_1,\dots,u_{m}) \;\text{ is less than or equal to }\; (\deg \tau,j,v_1,\dots,v_m)
$$ 
in the (left) lexicographic order.  Note that if $x=(x_1,\ldots,x_{n})$ are differential indeterminates and we identify $\alpha=(\xi,i)$ with $\dd^\xi x_i:=\dd_1^{u_1}\cdots\dd_m^{u_m}x_i$, then $\leq$ induces an order on the set of algebraic indeterminates $\{\dd^\xi x_i:(\xi,i)\in\Nn\}$ given by $\dd^\xi x_i\leq \dd^\tau x_j$ if and only if $\dd^\tau x_j$ is a derivative of $\dd^\xi x_i$ (in particular this implies that $i=j$). On the other hand, the ordering $\unlhd$ induces the canonical orderly ranking on the set of algebraic indeterminates.

Recall that an antichain of $(\Nn,\leq)$ is a subset of $\Nn$ of incomparable elements with respect to $\leq$. By Dickson's lemma every antichain must be finite. An {\it antichain sequence} of $(\N^m \times {\n},\leq)$ is a (finite) sequence $\bar \alpha=(\alpha_1,\dots,\alpha_k)$ of $\N^m \times {\n}$ such that $\alpha_i$ and $\alpha_j$ are incomparable when $i\neq j$.

We will look at field extensions of $K$ of the form
\begin{equation}\label{extL} 
L:=K(a^\xi_i : (\xi,i) \in \Gamma(r))
\end{equation}
for some fixed $r\in \mathbb N$, although occasionally we will have to consider extensions of the form $K(a^\xi_i: (\xi,i) \lhd (\tau,k))$ for some fixed $(\tau,k) \in \N^m \times {\n}$. Here we use $a^\xi_i$ as a way to index the generators of $L$ over $K$. The element $(\tau,j)\in \Nn$ is said to be a {\it leader} of $L$ if there is $\eta\in \N^m$ with $\eta\leq \tau$ and $\deg \eta\leq r$ such that $a^\eta_j$ is algebraic over $K(a^\xi_i : (\xi,i) \lhd (\eta,j))$, and a leader $(\tau,j)$ is a {\it minimal leader} of $L$ if there is no leader $(\xi,i)$ with $(\xi,i) < (\tau,j)$.  The set of minimal leaders of $L$ forms an antichain of $(\N^m \times {\n},\leq)$. We note that the notions of leader and minimal leader make sense even when we allow $r=\infty$.

\begin{definition}\label{defdiffker}
The field extension $L$, as in (\ref{extL}), is said to be a {\it differential kernel} over $K$ if there exist derivations 
$$
D_k:K(a^\xi_i : (\xi,i) \in \Gamma(r-1)) \to L
$$
extending $\partial_k$ for $1 \leq k \leq m$ such that $D_ka^\xi_i = a^{\xi + {\bf k}}_i$ for all $(\xi,i) \in \Gamma(r-1)$, where ${\bf k} \in \N^m$ is the $m$-tuple with a one in the $k$-th component and zeros elsewhere. The number $r$ is called the \emph{length} of the differential kernel. If $L$ has the form $K(a^\xi_i : (\xi,i) \lhd (\tau,j))$ for some fixed $(\tau,j) \in \N^m \times {\n}$, we say that $L$ is a differential kernel over $K$ if there exist derivations 
$$D_k:K(a^\xi_i : (\xi + {\bf k},i) \lhd (\tau,j)) \to L$$ 
extending $\partial_k$ for $1 \leq k \leq m$ such that $D_ka^\xi_i = a^{\xi + {\bf k}}_i$ whenever $(\xi + {\bf k},i) \lhd (\tau,j)$.
\end{definition}

Unless stated otherwise every differential kernel $L$ will have the form (\ref{extL}). 

\begin{definition}
A \emph{prolongation} of a differential kernel $(L,D_1,\dots,D_m)$ of \emph{length} $s\geq r$ is a differential kernel $L' = K(a^\xi_i : (\xi,i) \in \Gamma(s))$ over $K$ with derivations $D_1',\dots,D_m'$ such that $L'$ is a field extension of $L$ and  $D_k'$ extends $D_k$ for $1 \leq k \leq m$. The prolongation $L'$ of $L$ is called \emph{generic} if the set of minimal leaders of $L$ and $L'$ coincide.
\end{definition} 

In the ordinary case, $m=1$, every differential kernel of length $r$ has a prolongation of length $r+1$ (in fact a generic one) \citep[Proposition 1]{Lando}. However, for $m>1$, prolongations need not exist. 

\begin{example}\label{noprolon}
Working with $m=2$ and $n=1$, set $K=\Q$ and $L=\Q(t,t,1)$ where $t$ is transcendental over $\Q$. Here we are setting 
$$a^{(0,0)}=t, \quad a^{(1,0)}=t, \quad \text{ and } \quad a^{(0,1)}=1.$$ 
The field $L$ equipped with derivations $D_1$ and $D_2$ such that $D_1(t)=t$ and $D_2(t)=1$ is a differential kernel over $\Q$ of length $1$; however, it does not have a prolongation of length $2$. Indeed, if $L$ had a prolongation 
$$
L'=\Q(a^\xi:\deg\xi\leq 2)
$$
with derivations $D'_1$ and $D_2'$, then we would get the contradiction
$$
0=D_1' (1)=D_1'a^{(0,1)}=a^{(1,1)}=D_2'a^{(1,0)}=D_2'(t)=1.
$$
\end{example}

\begin{definition}
A differential kernel $L'=K(b_i^\xi:(\xi,i)\in\Gamma(r))$ is said to be a specialization (over $K$) of the differential kernel $L$ if the tuple $(b_i^\xi:(\xi,i)\in \Gamma(r))$ is a specialization of $(a_i^\xi:(\xi,i)\in \Gamma(r))$ over $K$ in the algebraic sense, that is, there is a $K$-algebra homomorphism 
$$
\phi:K(a_i^\xi:(\xi,i)\in \Gamma(r))\to K(b_i^\xi:(\xi,i)\in\Gamma(r))
$$
that maps $a_i^\xi\mapsto b_i^\xi$. The specialization is said to be \emph{generic} if $\phi$ is an isomorphism.
\end{definition}

\begin{lemma}\label{prolon}
Suppose $L'$ is a generic prolongation of $L$ of length $s$. If $\bar L$ is another prolongation of $L$ of length $s$, then $\bar L$ is a specialization of $L'$.
\end{lemma}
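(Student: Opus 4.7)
The plan is to construct a $K$-algebra homomorphism $\phi$ from the subring of $L'$ generated by the $a_i^\xi$'s to the subring of $\bar L$ generated by the corresponding generators $\bar a_i^\xi$ of $\bar L$, sending each $a_i^\xi\mapsto\bar a_i^\xi$. I build $\phi$ by induction on the $\unlhd$-enumeration of $\Gamma(s)$. Since elements of $\Gamma(r)$ precede those of $\Gamma(s)\setminus\Gamma(r)$ in $\unlhd$ and $a_i^\xi=\bar a_i^\xi$ for $(\xi,i)\in\Gamma(r)$ (as both fields prolong $L$), the base case sets $\phi$ to be the identity on the subring generated by the $\Gamma(r)$-indexed variables. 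For the inductive step at $(\eta,j)\in\Gamma(s)\setminus\Gamma(r)$, let $A=K[a_i^\xi:(\xi,i)\lhd(\eta,j)]$ and suppose $\phi$ has been defined on $A$. If $(\eta,j)$ is not a leader of $L'$, then by genericity it lies above no minimal leader of $L$, so $a_j^\eta$ is transcendental over $\operatorname{Frac}(A)$; the ring $A[a_j^\eta]$ is then a polynomial ring over $A$, and $\phi$ extends unambiguously by $a_j^\eta\mapsto\bar a_j^\eta$.

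If instead $(\eta,j)$ is a leader of $L'$, then by genericity there is a minimal leader $(\tau,j)\in\Gamma(r)$ of $L$ with $\tau\le\eta$. Let $Q$ be the minimal polynomial of $a_j^\tau$ over $K(a_i^\xi:(\xi,i)\lhd(\tau,j))$, monic in $y_j^\tau$. Define formal derivations $\delta_k$ on the ambient polynomial ring by $\delta_k y_i^\xi=y_i^{\xi+\mathbf{k}}$ and $\delta_k|_K=\partial_k$, so the chain rule yields $D'_k(P(a))=(\delta_k P)(a)$ whenever both sides make sense. Iterating $|\eta-\tau|$ times---legitimate because at intermediate stage $\ell$ the expression involves variables of degree at most $\deg\tau+\ell\le\deg\eta-1\le s-1$---produces $(\delta^{\eta-\tau}Q)(a)=0$ in $L'$. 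The crucial structural calculation, obtained by expanding $\delta^{\eta-\tau}Q$ via the generalized Leibniz rule and checking the $\unlhd$-comparisons on every resulting monomial, gives
$$\delta^{\eta-\tau}Q \;=\; \frac{\partial Q}{\partial y_j^\tau}\cdot y_j^\eta \;+\; S,$$
where $S$ involves only variables indexed $\lhd(\eta,j)$. Minimality of $Q$ together with characteristic zero forces $Q'(a):=(\partial Q/\partial y_j^\tau)(a)\neq 0$, so $a_j^\eta=-S(a)/Q'(a)$ in $L'$. The same argument in $\bar L$ (valid since $Q(\bar a)=Q(a)=0$---$Q$'s variables all lie in $\Gamma(r)$---and $\bar L$ is itself a differential kernel) yields $\bar a_j^\eta=-S(\bar a)/Q'(\bar a)$, where $Q'(\bar a)=Q'(a)\neq 0$ because $Q'$ involves only $\Gamma(r)$-variables.

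To check that the extension $a_j^\eta\mapsto\bar a_j^\eta$ is well defined on $A[a_j^\eta]$, take any vanishing relation $\sum_k p_k(a)(a_j^\eta)^k=0$ with $p_k\in A$. Substituting $a_j^\eta=-S(a)/Q'(a)$ and multiplying through by $Q'(a)^M$ (where $M=\max k$) yields $\sum_k p_k(a)(-S(a))^k Q'(a)^{M-k}=0$, a relation purely among variables indexed $\lhd(\eta,j)$; by the inductive hypothesis the same identity holds with $\bar a$ in place of $a$, and dividing by $Q'(\bar a)^M\neq 0$ recovers $\sum_k p_k(\bar a)(\bar a_j^\eta)^k=0$. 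The main technical obstacle is the displayed decomposition of $\delta^{\eta-\tau}Q$: tracking how each multinomial term distributes the iterated derivative across the variables of $Q$---some of which lie $\lhd(\tau,j)$ and others equal $y_j^\tau$---requires a careful case analysis of $\unlhd$ (on degree, then on index, then on the lex order of exponents) to confirm that every non-leading contribution stays $\lhd(\eta,j)$. Once this is in place, the induction proceeds routinely.
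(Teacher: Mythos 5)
Your proposal is correct and follows essentially the same route as the paper's proof: a recursive construction of the specialization map in the $\unlhd$-order, with the non-leader case handled by transcendence and the leader case by the minimal polynomial of the minimal leader lying below the current index. The only material difference is one of explicitness: where the paper cites ``the standard argument (in characteristic zero) to compute the derivative of an algebraic element in terms of its minimal polynomial,'' you spell out the separant decomposition $\delta^{\eta-\tau}Q = (\partial Q/\partial y_j^\tau)\, y_j^\eta + S$ and carry out the denominator-clearing well-definedness check on the ring level rather than extending a field embedding directly.
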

\begin{proof}
Let $L'=K(a_i^\xi:(\xi,i)\in\Gamma(s))$ with derivations $D'_1,\dots,D'_m$, and $\bar L=K(b_i^\xi:(\xi,i)\in \Gamma(s))$ with derivations $\bar D_1,\dots,\bar D_m$. Since $\bar L$ is a prolongation of $L$, we have that $b_i^\xi=a_i^\xi$ for all $(\xi,i)\in \Gamma(r)$. For convenience of notation we let 
$$
L'_{\unlhd(\tau,j)} := K(a_i^\xi:(\xi,i)\unlhd (\tau,j)) \; \text{ and } \; L'_{\lhd(\tau,j)} := K(a_i^\xi:(\xi,i)\lhd (\tau,j)),
$$
when $r\leq \deg \tau\leq s$. Note that 
$$
L'_{\unlhd(r{\bf 1},n)}=L \; \text{ and } \; L'_{\unlhd(s{\bf 1},n)}=L'.
$$
Similar notation, and remarks, apply to $\bar L_{\unlhd(\tau,j)}$ and $\bar L_{\lhd(\tau,j)}$. 

We prove the lemma by constructing the desired $K$-algebra homomorphism 
$$
\phi:L'_{\unlhd(\tau,j)}\to \bar L_{\unlhd(\tau,j)}
$$
recursively where $ (r{\bf 1},n)\unlhd (\tau,j)\unlhd (s{\bf 1},n)$. The base case, $(\tau,j)=(r{\bf 1},n)$, is  trivial since then 
$$L'_{\unlhd(r{\bf 1},n)}=L=\bar L_{\unlhd(r{\bf 1},n)}.$$ 
Now assume $(r{\bf 1},n)\lhd (\tau,j)\unlhd (s{\bf 1},n)$ and that we have a $K$-algebra homomorphism $\phi':L'_{\lhd(\tau,j)}\to \bar L_{\lhd(\tau,j)}$ mapping $a_i^\xi\mapsto b_i^\xi$ for $(\xi,i)\lhd(\tau,j)$. If $(\tau,j)$ is not a leader of $L'$, then $a_j^\tau$ is transcendental over $L'_{\lhd(\tau,j)}$, and so $\phi'$ extends to the desired $K$-algebra homomorphism $\phi:L'_{\unlhd(\tau,j)}\to\bar L_{\unlhd(\tau,j)}$. 

Hence, it remains to show the case when $(\tau,j)$ is a leader of $L'$. In this case, since $L'$ is a generic prolongation of $L$, $(\tau,j)$ is a nonminimal leader of $L'$, and moreover $a_j^\tau=(D')^\zeta a_j^\eta$ for some minimal leader $(\eta,j)$ of $L$ and nonzero $\zeta\in \N^m$. Let $f$ be the minimal polynomial of $a_j^\eta\in L$ over $K(a_i^\xi:(\xi,i)\lhd (\eta,j))$. The standard argument (in characteristic zero) to compute the derivative of an algebraic element in terms of its minimal polynomial yields a polynomial $g$ over $L'_{\lhd(\tau,j)}$ and a positive integer $\ell$ such that
$$
a_j^\tau=(D')^\zeta a_j^\eta=\frac{g(a_j^\eta)}{(f'(a_j^\eta))^\ell}\in L'_{\lhd(\tau,j)}.
$$
Similarly, there is a polynomial $h$ over $\bar L_{\lhd(\tau,j)}$ such that
$$
b_j^\tau=\bar D^\zeta a_j^\eta=\frac{h(a_j^\eta)}{(f'(a_j^\eta))^\ell}\in \bar L_{\lhd(\tau,j)},
$$
and, moreover, one such $h$ is obtained by applying $\phi'$ to the coefficients of $g$. This latter observation, together with the two equalities above, imply that $L'_{\unlhd(\tau,j)}=L'_{\lhd(\tau,j)}$ and that $\phi'(a_j^\tau)=b_j^\tau$. Hence, in the case when $a_j^\tau$ is a leader, setting $\phi:=\phi'$ yields the desired $K$-algebra homomorphism.
\end{proof}

\begin{definition}
An $n$-tuple $g=(g_1,\dots,g_n)$ contained in a differential field extension $(M,\dd_1',\dots,\dd_m')$ of $(K,\D)$ is said to be a \emph{regular realization} of the differential kernel $L$ if the tuple 
$$
((\dd')^\xi g_i:(\xi,i)\in\Gamma(r))
$$
is a generic specialization of $(a_i^\xi:(\xi,i)\in \Gamma(r))$ over $K$. The tuple $g$ is said to be a \emph{principal realization} of $L$ if there exists an infinite sequence of differential kernels $L=L_0,L_1,\dots$ of strictly increasing lengths, each a generic prolongation of the preceding, such that $g$ is a regular realization of each $L_i$. 
\end{definition}

Note that the differential kernel $L$ has a regular realization if and only if there exists a differential field extension $(M,\dd_1',\dots,\dd_m')$ of $(K,\D)$ such that $L$ is a subfield of $M$ and $\dd'_ka^\xi_i = a^{\xi + {\bf k}}_i$ for all $(\xi,i) \in \Gamma(r-1)$ and $1\leq k\leq m$. In this case, $g:=(a_1^{\bf 0},\dots,a_n^{\bf 0})$ is a regular realization of $L$, and $g$ will be a principal realization of $L$ if and only if the minimal leaders of $L$ and $K\langle g\rangle$ coincide.

\begin{lemma}\label{diffspec}
If $f$ is a principal realization and $g$ is a regular realization of the differential kernel $L$, then $g$ is a differential specialization of $f$ (i.e., there is a differential $K$-algebra homomorphism between the differential fields generated by $f$ and $g$ over $K$ mapping $f\mapsto g$).
\end{lemma}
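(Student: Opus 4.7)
My plan is to construct a differential $K$-algebra homomorphism $\pi : K\l f\r \to K\l g\r$ with $\pi(f) = g$ by building a compatible tower of $K$-algebra homomorphisms $\pi_s : L_s \to \bar L_s$ between the finite-length kernels in the principal realization sequence for $f$ and the corresponding finite-length kernels generated by the derivatives of $g$, and then passing to the directed union. Concretely, since $f$ is a principal realization there is a sequence $L_0 = L, L_1, L_2, \ldots$ of differential kernels of strictly increasing lengths $r = r_0 < r_1 < \cdots$, each a generic prolongation of the preceding, with $f$ a regular realization of each $L_s$; hence $L_s = K((\dd')^\xi f_i : (\xi,i)\in\Gamma(r_s))$ inside the ambient differential field containing $f$. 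Setting $c_i^\xi := (\dd')^\xi g_i$ inside the ambient differential field containing $g$, I let $\bar L_s := K(c_i^\xi : (\xi,i)\in\Gamma(r_s))$. The regularity of $g$ gives a $K$-algebra \emph{isomorphism} $\pi_0 : L \to \bar L_0$ sending $a_i^\xi \mapsto c_i^\xi$, and my goal is to extend this recursively in $s$ to a $K$-algebra homomorphism $\pi_s : L_s \to \bar L_s$ sending $a_i^\xi \mapsto c_i^\xi$ on all of $\Gamma(r_s)$. The directed union $\pi : K\l f\r = \bigcup_s L_s \to K\l g\r$ will then satisfy $\pi((\dd')^\xi f_i) = (\dd')^\xi g_i$ for all $(\xi,i) \in \Nn$, forcing it to intertwine the ambient derivations on a generating set and hence everywhere, so that it is a differential $K$-algebra homomorphism.

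To build $\pi_s$ from $\pi_{s-1}$, I will mimic the recursion in the proof of Lemma \ref{prolon}: traversing the pairs $(\tau,j)$ with $r_{s-1} < \deg\tau \leq r_s$ in increasing $\unlhd$-order, I extend the partial map one pair at a time. When $(\tau,j)$ is not a leader of $L_s$, the generator $a_j^\tau$ is transcendental over the subfield already mapped, and I extend freely by $\pi_s(a_j^\tau) := c_j^\tau$. When $(\tau,j)$ is a leader, a straightforward induction shows that $L_s$ is a generic prolongation of $L$, so $(\tau,j)$ must be a \emph{nonminimal} leader and hence a derivative of some minimal leader $(\eta,j)$ of $L$. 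The standard characteristic-zero derivation-of-an-algebraic-element formula then writes $a_j^\tau$ as an explicit rational function of generators of strictly smaller $\unlhd$-index; the identical formula carried out in $\bar L_s$ yields $c_j^\tau$, so that setting $\pi_s(a_j^\tau) := c_j^\tau$ is both forced and consistent.

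The main obstacle will be the leader case: one must verify that the minimal polynomial of $a_j^\eta$ over $K(a_i^\xi:(\xi,i)\lhd(\eta,j))$ is carried by $\pi_{s-1}$ to the minimal polynomial of $c_j^\eta$ over the corresponding subfield of $\bar L_s$, so that the rational-function expressions on the two sides genuinely match. This is precisely where the hypothesis that $\pi_0$ is an \emph{isomorphism}, rather than merely a homomorphism, is essential, and it is the sole place in the argument where the regularity of $g$ (as opposed to mere algebraic consistency) is used. Apart from this verification, the argument is a direct transcription of the proof of Lemma \ref{prolon} with the identity on $L$ there replaced by the isomorphism $\pi_0$ at the base of the recursion.
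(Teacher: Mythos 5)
Your proposal is correct and follows essentially the same approach as the paper's. The paper's proof is more concise: for each $s \ge r$ it simply observes that $L'_s := K((\dd')^\xi f_i : (\xi,i)\in\Gamma(s))$ is a generic prolongation of $L$ and $\bar L_s := K(\bar\dd^\xi g_i : (\xi,i)\in\Gamma(s))$ is a prolongation of $L$, invokes Lemma~\ref{prolon} as a black box to get a specialization $L'_s \to \bar L_s$, and takes the directed union. You instead inline the recursion of Lemma~\ref{prolon} to build the tower $\pi_s$ step by step; this is logically the same argument (and has the small advantage of making the compatibility of the maps across different $s$ explicit by construction, a point the paper's ``take the union of this chain'' leaves implicit but which is guaranteed by the recursive nature of the construction in Lemma~\ref{prolon}). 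Your identification of where regularity enters -- namely the isomorphism $\pi_0$ at the base -- is accurate, and your inductive claim that each $L_s$ is a generic prolongation of $L$ (so any leader of degree $>r$ is nonminimal) is exactly the hypothesis Lemma~\ref{prolon} needs at each stage.
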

\begin{proof}
Since $f$ is a principal realization of $L$, there is a differential field extension $(M,\dd'_1,\dots,\dd_m')$ of $K$ containing $L=K(a^\xi_i:(\xi,i)\in\Gamma(r))$ such that $\dd'_ka^\xi_i=a^{\xi+{\bf k}}_i$ for all $(\xi,i)\in \Gamma(r-1)$ and $K\langle a_1^{\bf 0},\dots,a_n^{\bf 0}\rangle$ has the same minimal leaders as $L$. Similarly, since $g$ is a regular realization of $L$, there is a differential field extension $(N,\bar\dd_1,\dots,\bar\dd_m)$ of $K$ containing $L$ such that $\bar\dd_ka^\xi_i=a^{\xi+{\bf k}}_i$.  

Now, for each $s\geq r$, the differential kernel given by 
$$
L'_s:=K((\dd')^{\eta}a_i^{\bf 0}:(\eta,i)\in\Gamma(s))
$$
is a generic prolongation of $L$, and the one given by 
$$
\bar L_s:=K(\bar\dd^{\eta}a_i^{\bf 0}:(\eta,i)\in\Gamma(s))
$$
is a prolongation of $L$. By Lemma \ref{prolon}, $\bar L_s$ is a specialization of $L'_s$. Since this holds for all $s\geq r$, the desired differential specialization is obtained by taking the union of this chain.
\end{proof}

\begin{remark}
One can similarly define prolongations, and regular and principal realizations, if the differential kernel is of the form $K(a^\xi_i : (\xi,i) \lhd (\tau,j))$ for some fixed $(\tau,j) \in \N^m \times {\n}$.  In addition, Lemmas \ref{prolon} and \ref{diffspec} also hold in this case, with the same proofs.
\end{remark}

In the ordinary case, $m=1$, every differential kernel has a regular realization (in fact a principal one) \citep[Proposition 3]{Lando}. However, if $m > 1$, regular realizations do not always exist. Moreover, as the following example shows, there are differential kernels of length $r$ with a prolongation of length $2r-1$ but with no regular realization.

\begin{example}\label{examr}
Working with $m=2$ and $n=1$, set $K=\Q$. Let 
$$
L=\Q(a^{(i,j)}:i+j\leq r)
$$
where the $a^{(i,j)}$'s are all algebraically independent over $\Q$ except for the algebraic relations $a^{(0,r)}=a^{(0,r-1)}$ and $a^{(r,0)}=(a^{(0,r-1)})^2$. Set $t:=a^{(0,r-1)}$, so $a^{(0,r)}=t$ and $a^{(r,0)}=t^2$. The field $L$ is a differential kernel over $\Q$ of length $r$. Moreover, it has a (generic) prolongation of length $2r-1$. However, it does not have a prolongation of length $2r$ (and consequently no regular realization of $L$ exists). Indeed, if $L$ had a prolongation 
$$
L'=\Q(a^{(i,j)}:i+j\leq 2r)
$$
with derivations $D_1'$ and $D_2'$, then, as $D_2'(t)=t$, we would have 
$$
D'_1a^{(i,r)}=a^{(i+1,r-1)}
$$
for $0\leq i\leq r-1$, and 
$$
a^{(r,j)}=(D_2')^j(t^2)=2^jt^2
$$
for $1\leq j\leq r-1$. In particular, 
$$
D_1'a^{(r-1,r)}=a^{(r,r-1)}=2^{r-1}t^2 \; \text{ and } \; D_2'a^{(r,r-1)}=D_2'(2^{r-1}t^2)=2^{r}t^2.
$$
This would yield the contradiction
$$2^{r-1}t^2=D_1'a^{(r-1,r)}=a^{(r,r)}=D_2'a^{(r,r-1)}=2^rt^2.$$
\end{example}

Nonetheless, there are conditions on the minimal leaders of a differential kernel that guarantee the existence of a regular realization. In \citep{Pierce}, Pierce proved results of this type using different terminology: In his paper differential kernels are referred to as fields satisfying the \emph{differential condition}, and a regular realization of $L$ is referred to as the existence of a differential field extension of $K$ \emph{compatible} with $L$. Using the terminology of differential kernels \citep[Theorem 4.3]{Pierce} translates to:

\begin{theorem}\label{Pierce43}
Let $L=K(a^{\xi}_i:(\xi,i)\in\Gamma(r))$ be a differential kernel over $K$ for some even integer $r>0$. Suppose further that

\smallskip
\begin{enumerate}
\item[$($\namedlabel{condition1}{$\dagger$}$)$] for every minimal leader $(\xi,i$) of $L$ we have that $\deg\xi\leq \frac{r}{2}$.
\end{enumerate}
\smallskip
Then the differential kernel $L$ has a regular realization.
\end{theorem}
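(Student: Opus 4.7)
The plan is to construct a tower of generic prolongations $L = L_0 \subseteq L_1 \subseteq L_2 \subseteq \cdots$, with each $L_s$ a differential kernel of length $r+s$ sharing the minimal leaders of $L$, and then take the union to obtain a differential field extension $(M,\dd'_1,\ldots,\dd'_m)$ of $K$ containing $L$ with $\dd'_k a_i^\xi = a_i^{\xi+{\bf k}}$ for all $(\xi,i)\in\Gamma(r-1)$. By the remark following the definition of regular realization, $(a_1^{\bf 0},\ldots,a_n^{\bf 0})$ is then a regular realization of $L$. Because the minimal leaders of a generic prolongation coincide with those of the original kernel, the bound $\deg\xi\leq r/2$ is automatically preserved as $r$ grows (trivially $r/2\leq (r+s)/2$), so the whole construction reduces to one inductive step: from a differential kernel of length $r$ satisfying $(\dagger)$, produce a generic prolongation of length $r+1$.

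For this step, I would enumerate the positions $(\xi',i)$ with $\deg\xi' = r+1$ in an order compatible with $\unlhd$ and define each $a_i^{\xi'}$ successively. If $\xi'\geq\eta$ for some minimal leader $(\eta,i)$ of $L$ (necessarily with $\deg\eta\leq r/2$), then $a_i^{\xi'}$ is forced to equal $D^{\xi'-\eta}a_i^\eta$; in characteristic zero this value is explicitly computable by iteratively differentiating the minimal polynomial of $a_i^\eta$ over $K(a_j^\tau : (\tau,j)\lhd(\eta,i))$ and solving for the top-order term, in the spirit of the calculation already carried out in the proof of Lemma \ref{prolon}. Otherwise $a_i^{\xi'}$ is adjoined as a fresh algebraically independent transcendental.

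The hard part will be consistency: when $(\xi',i)$ is majorized by two different minimal leaders, or is reachable from two distinct lower-degree positions via different $D_k$'s, the forced expressions must agree, and the extended operators $D_1,\ldots,D_m$ must remain pairwise commuting derivations on $L^+$. This is exactly where condition $(\dagger)$ must be used. Since the minimal polynomial of a leader $(\eta,i)$ with $\deg\eta\leq r/2$ involves only indeterminates $a_j^\tau$ with $\deg\tau\leq\deg\eta\leq r/2$, every intermediate derivative appearing in the computation of $D^{\xi'-\eta}a_i^\eta$ has cumulative degree $\leq r+1$; such intermediates therefore either already lie in $L$ (where the $D_k$'s commute by hypothesis) or are themselves among the degree-$(r+1)$ values being defined in this step. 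Two independently computed expressions for the same $a_i^{\xi'}$ can then be equated by a downward induction on degree, reducing the check to the integrability relations already known on $L$. Once consistency is verified, $L^+$ is obtained by adjoining the chosen values, and iterating yields the tower whose direct limit supplies the regular realization.
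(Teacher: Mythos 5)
Your overall strategy — build a tower of generic prolongations one degree at a time and take the union — is the right one, and it is the same skeleton the paper uses in the proof of the more general Theorem~\ref{ulti} (which, combined with the observation that condition~\eqref{condition1} implies condition~\eqref{condition2}, is how the paper obtains Theorem~\ref{Pierce43}). However, the consistency argument, which you correctly identify as the hard part, is hand-waved at exactly the point where the real work lives.

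Here is the concrete problem. Your justification for consistency rests on the claim that ``every intermediate derivative appearing in the computation of $D^{\xi'-\eta}a_i^\eta$ has cumulative degree $\leq r+1$,'' and that the two forced values of $a_i^{\xi'}$ can then be equated ``by a downward induction on degree, reducing the check to the integrability relations already known on $L$.'' But this observation is true for \emph{any} leader $(\eta,i)$ of degree $\leq r$, with no use of the hypothesis $\deg\eta\leq r/2$; if it sufficed, Example~\ref{examr} could not exist. The bound $r/2$ is needed for something sharper: if $(\eta_1,i)$ and $(\eta_2,i)$ are minimal leaders below the two directions $\xi'-\mathbf{a}$, $\xi'-\mathbf{b}$, then $\deg\LUB(\eta_1,\eta_2)\leq r/2+r/2=r<\deg\xi'$, hence $\LUB(\eta_1,\eta_2)<\xi'$ strictly, so there is a coordinate $k$ with $\eta_1(k)\leq\LUB(\eta_1,\eta_2)(k)<\xi'(k)$ and $k\neq a$ (or $k\neq b$). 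This produces a leader position $(\xi'-\mathbf{a}-\mathbf{k},i)$ through which both forced computations can be routed: $D_a a_i^{\xi'-\mathbf{a}}=D_a D_k a_i^{\xi'-\mathbf{a}-\mathbf{k}}=D_k D_a a_i^{\xi'-\mathbf{a}-\mathbf{k}}=D_k a_i^{\xi'-\mathbf{k}}$, and similarly for $b$. Passing $D_a$ past $D_k$ over the algebraic element $a_i^{\xi'-\mathbf{a}-\mathbf{k}}$ is not an ``integrability relation already known on $L$'' (recall the given $D_k$'s are only defined on $\Gamma(r-1)$ and only commute on $\Gamma(r-2)$); it requires Lemma~\ref{Pierce42}, which says that a pair of commuting derivations extends uniquely to a pair of commuting derivations over a simple algebraic extension. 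Without the bridge coordinate $k$ (which exists only because $(\dagger)$ forces the LUB below the working degree) and without Lemma~\ref{Pierce42} to actually commute the two derivations at that position, the argument does not close, and the phrase ``downward induction on degree'' does not supply a mechanism that distinguishes your situation from a kernel such as the one in Example~\ref{examr} that has no regular realization.
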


Note that a differential kernel $L$ has a regular realization if and only if it has prolongations of any length. Thus the natural question to ask is: Is the existence of a regular realization guaranteed as long as one can find prolongations of a certain (finite) length? And if so, how can one compute this length, and what is the complexity of this length in terms of the data of the differential kernel? To answer these questions we will need the following terminology. Given an increasing function $f:\N_{>0}\to\N$, we say that $f$ bounds the degree growth of a sequence $\alpha_1,\dots,\alpha_k$ of elements of $\Nn$ if $\deg \alpha_i\leq f(i)$, for $i=1,\dots,k$. We let $\L_{f,m}^n$ be the maximal length of an antichain sequence of $\Nn$ of degree growth bounded by $f$. The existence of the number $\L_{f,m}^n$ follows from generalizations of Dickson's lemma \citep{figueira2011}. Recently, in \citep{LeOv}, an algorithm that computes the exact value of $\L_{f,m}^n$ was established (in fact, an antichain sequence of degree growth bounded by $f$ of maximal length was built).

The following is a consequence of Theorem~\ref{Pierce43} (for details see the proof of \citep[Theorem 4.10]{Pierce} or the discussion after Fact 3.6 of \citep{FreLe}).

\begin{theorem}\label{Pierce410}
Suppose $L = K(a^\xi_i : (\xi,i) \in \Gamma(r))$ is a differential kernel over $K$. Let $f:\N_{>0}\to\N$ be defined as $f(i)=2^i r$. If $L $ has a prolongation of length $2^{\L_{f,m}^n+1}r$, then $L$ has a regular realization.
\end{theorem}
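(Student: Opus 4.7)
The plan is to apply Theorem~\ref{Pierce43} to a suitable intermediate prolongation inside the given one of length $N=2^{\L_{f,m}^n+1}r$: I will show that at some such intermediate stage the Pierce degree condition \eqref{condition1} is forced to hold, and that a regular realization at that stage restricts to a regular realization of $L$. Write $\L=\L_{f,m}^n$ and, for $i=0,1,\ldots,\L+1$, set $L_i=K(a^\xi_j:(\xi,j)\in \Gamma(2^ir))$, which is a differential kernel of length $2^ir$ with the derivations inherited from the given prolongation. Thus $L_0=L$, each $L_{i+1}$ is a prolongation of $L_i$, and each $L_i$ with $i\geq 1$ has positive even length.

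The first ingredient is that the sets $M_i$ of minimal leaders of $L_i$ form an increasing chain $M_1\subseteq\cdots\subseteq M_{\L+1}$. Indeed, if $\alpha\in M_i$ and $\alpha'\leq \alpha$ in $\Nn$, then $\deg\alpha'\leq \deg\alpha\leq 2^ir$, so every $\beta\lhd\alpha'$ also has $\deg\beta\leq 2^ir$; hence any polynomial witness in $L_{i+1}$ to $\alpha'$ being a leader already lives inside $L_i$ and therefore already makes $\alpha'$ a leader of $L_i$. By minimality of $\alpha$ in $L_i$ this forces $\alpha'=\alpha$, so $\alpha\in M_{i+1}$. In particular every element of $\bigcup_{i\leq \L+1} M_i$ belongs to the antichain $M_{\L+1}$ of $(\Nn,\leq)$.

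The main step is then a dichotomy. Suppose that for some $i\in\{1,\ldots,\L+1\}$ every element of $M_i$ has degree at most $2^{i-1}r$, that is, at most half of the length of $L_i$. Then Theorem~\ref{Pierce43} applies to $L_i$ and produces a regular realization $g=(g_1,\dots,g_n)$ of $L_i$; restricting the generic specialization from $\Gamma(2^ir)$ to $\Gamma(r)$, which is just passing to a sub-tuple, immediately shows that $g$ is also a regular realization of $L$, as required. Otherwise, for each $i\in\{1,\ldots,\L+1\}$ one can select $\alpha_i\in M_i$ with $\deg\alpha_i>2^{i-1}r$. Since the $\alpha_i$ all sit inside the antichain $M_{\L+1}$ they are pairwise incomparable, and the containment $\alpha_i\in\Gamma(2^ir)$ gives $\deg\alpha_i\leq 2^ir=f(i)$. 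Thus $(\alpha_1,\ldots,\alpha_{\L+1})$ is an antichain sequence of $\Nn$ of length $\L+1$ whose degree growth is bounded by $f$, contradicting the maximality built into the definition of $\L_{f,m}^n$.

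The delicate point, and the only thing that really needs verifying, is the monotonicity $M_i\subseteq M_{i+1}$: without it the $\alpha_i$ extracted in the second branch would not be guaranteed to form an antichain, and the whole contradiction collapses. Once that is in hand, the theorem is a direct combination of Theorem~\ref{Pierce43} with the extremal definition of $\L_{f,m}^n$.
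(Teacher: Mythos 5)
Your proposal is correct and is essentially the argument behind the paper's citation of Pierce's Theorem 4.10 (and the discussion in Freitag--Le\'on S\'anchez): iterate Theorem~\ref{Pierce43} across the nested kernels $L_i$ of length $2^i r$, and observe that if the degree-halving condition \eqref{condition1} never holds, the chosen minimal leaders of large degree assemble into an antichain sequence of length $\L_{f,m}^n+1$ with growth bounded by $f$, a contradiction. The monotonicity $M_i\subseteq M_{i+1}$ that you flag as the delicate step is indeed the key lemma, and your verification of it (every witness for a leader $\alpha'\leq\alpha$ with $\alpha\in M_i$ already lives in $\Gamma(2^ir)$) is sound; you also correctly note that the $\alpha_i$ are automatically distinct because $\deg\alpha_i\le 2^ir<\deg\alpha_j$ whenever $i<j$.
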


The above theorem motivates the following definition:

\begin{definition}\label{thebignumber}
Given integers $m,n>0$ and $r\geq 0$, we let $T_{r,m}^n$ be the smallest integer $\geq r$ with the following property: For any differential field $(K,\dd_1,\dots,\dd_m)$ of characteristic zero with $m$ commuting derivations and any differential kernel $L$ over $K$ of length $r$, if $L$ has a prolongation of length $T_{r,m}^n$, then $L$ has a regular realization. 
\end{definition}

Theorem \ref{Pierce410} shows that 
$$
T_{r,m}^n\leq 2^{\L_{f,m}^n+1}r \; \text{ where} \; f(i)=2^ir.
$$
This upper bound of $T_{r,m}^n$ is not sharp. For instance, \citep[Proposition 3]{Lando} shows that $T_{r,1}^n=r$, while $2^{\L_{f,1}^n+1}r=2^{n+1}r$. Also, by Lemma~\ref{basic}(3) below we have that $T_{r,2}^1=2r$, while $2^{\L_{f,2}^1+1}r=2^{2r+2}r$. 

In general, for $m>1$, a formula that computes the value of $T_{r,m}^n$ has not yet been found, and thus establishing computationally practical upper bounds is an important problem. In the following sections we establish a much better upper bound for $T_{r,m}^n$ (which is computationally practical for $m\leq 3$). More precisely, in Section \ref{algorithm} we prove that 
$$
T_{r,m}^n\leq \L_{g,m}^n+r-n,
$$
where $g(i)=r+i-1$. One can actually replace $g$ for a \emph{slightly smaller} function $g_n$ (see Section \ref{todefgn}), but the definition of $g_n$ is more convoluted. So, here we decided to state the upper bound in terms of $g$ for the sake of clarity. (Note that $\L_{g_n,m}^n \leq \L_{g,m}^n$ since $g_n(i)\leq g(i)$ for all $i\in \N_{> 0}$.)

To put our new bound in comparison with what was previously known, let us consider some cases:

\begin{enumerate}
\item For $m=1$, our bound reduces to $r$, which, as we pointed out above, is the exact value of $T_{r,1}^n$.
\item For $m = 2$, the previous bound yields
$$
T_{r,2}^n \leq 2^{b_n+1}r,
$$
where $b_n$ is given recursively by $b_0 = 0$ and $b_{i+1} = 2^{b_i+1}r + b_i + 1$; see \citep[\S3]{LeOv}. In particular, 
$$
T_{r,2}^1\leq 2^{2r+2}r \; \text{ and } \; T_{r,2}^2\leq 2^{2^{2r+2}r+2r+3}r.
$$
On the other hand, our new bound (see Theorem \ref{UpperBound2}) yields
$$
T_{r,2}^n\leq 2^nr,
$$
which is a new and practical result.

\item For $m=3$, up until now it was only known that 
$$
T_{1,3}^1\leq 2^{71}\text{ and } T_{2,3}^1\leq 2^{2^{2^{520}+520}+2^{520}+521};
$$
see \citep[Example 3.15]{LeOv}. Our bound (see Corollary \ref{specificvalues}) yields
$$
T_{r,3}^1\leq3(2^r-1).
$$

\item So far no feasible upper bound was known for $m\geq 4$. Our bound yields
$$
T_{1,4}^1\leq 5, \quad T_{1,5}^1\leq 13, \; \text{ and } \; T_{1,6}^1\leq 65533.
$$

\item More generally (for arbitrary $m$), in \citep{LeOv}, it was shown that 
\begin{equation}\label{usingA}
T_{r,m}^n< \begin{cases}
2A(m+3,4r-1)  & \text{ when } n = 1 \\
\frac{2}{n}A(m+5,4nr-1) & \text{ when } n>1. \\
\end{cases}
\end{equation}
Here $A:\N\times \N\to \N$ denotes the Ackermann function:
$$
A(x,y) = \begin{cases} y + 1 & \text{ if } x = 0 \\ A(x-1,1) & \text{ if } x > 0 \text{ and } y = 0 \\ A(x-1,A(x,y-1)) & \text{ if } x,y > 0. \end{cases}
$$
The Ackermann function is known to have extremely large growth, especially in the first input.  For example, $A(1,y) = y+2, \; A(2,y) = 2y+3, \; A(3,y) = 2^{y+3}-3$, and 
$$A(4,y) = \underbrace{{2^2}^{{\cdot}^{{\cdot}^{{\cdot}^2}}}}_{y+3} - 3.$$
Thus, the upper bounds (\ref{usingA}) are not computationally feasible, since the first input is $m+3$ when $n = 1$, and $m+5$ when $n > 1$. On the other hand,  by Corollary \ref{generalAckermannbound},  our bound implies that
$$
T_{r,m}^n \leq A_n(m,r), 
$$
where $A_n:\N \times \N_{>0} \to \N$ is an iterated Ackermann function given by
$$
A_n(x,y)=\begin{cases} A(x,y-1)-1 & \text{ if } n = 1 \\ A(x,A_{n-1}(x,y)-1)-1 & \text{ if } n>1. \end{cases}
$$
This new upper bound is much easier to work with, especially for small inputs.  For example, $A_n(3,r)$ is a tower of exponentials in $r$, where the height of the tower is equal to $n$.
\end{enumerate}


\section{On the existence of principal realizations}\label{expri}

In this section we give an improvement of Theorems \ref{Pierce43} and \ref{Pierce410}. This improvement comes from replacing condition \eqref{condition1} by a \emph{weaker} condition that guarantees the existence of a principal realization of a given differential kernel. We use the notation of the previous section; in particular, $(K,\dd_1,\dots,\dd_m)$ is our base differential field of characteristic zero with $m$ commuting derivations.

Given two elements $\eta = (u_1,\dots,u_m)$ and $\tau = (v_1,\dots,v_m)$ in $\N^m$, we let 
$$
\LUB(\eta,\tau) = (\max(u_1,v_1),\dots,\max(u_m,v_m))
$$ 
be the \emph{least upper bound} of $\eta$ and $\tau$ with respect to the order $\leq$. Given an antichain sequence $\bar \alpha$ of $\Nn$ we let 
$$
\gamma(\bar\alpha)=\{(\LUB(\eta,\tau),i): \, \eta \neq \tau \text{ with } (\eta,i),(\tau,i) \in\bar\alpha \text{ for some }i\}.
$$
Clearly, if for some integer $r\geq 0$ we have $\bar \alpha\subseteq\Gamma(r)$, then $\gamma(\bar \alpha)\subseteq \Gamma(2r)$. For a field extension of $K$ of the form $L=K(a_i^\xi:(\xi,i)\in\Gamma(r))$, we let $\gamma(L)$ denote $\gamma(\bar\alpha)$ where $\bar\alpha=(\alpha_1,\dots,\alpha_k)$ is the antichain sequence consisting of all minimal leaders of $L$ ordered increasingly with respect to $\unlhd$. Note that $$\gamma(L)\subseteq \Gamma(2r).$$

In the proof of Theorem \ref{ulti} below we will use the following fact about extending pairs of commuting derivations. It appears in \citep[Lemma 4.2]{Pierce}.

\begin{lemma}\label{Pierce42}
Suppose a field $M$ has two subfields $L_1$ and $L_2$ with a common subfield $K$.  Suppose also there exist derivations $D_i:L_i\to M$ for $i=1,2$ such that $D_1(K) \subseteq L_2$ and $D_2(K) \subseteq L_1$. If these derivations commute on $K$, then, for any $a\in M$ algebraic over $K$, they extend uniquely to derivations $D_1':L_1(a)\to M$ and $D_2':L_2(a)\to M$, with $D_1'(K(a)) \subseteq L_2(a)$ and $D_2'(K(a)) \subseteq L_1(a)$, which commute on $K(a)$. 
\end{lemma}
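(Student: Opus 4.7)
The plan is to construct each extension $D_i'$ using the standard formula for extending a derivation to a simple algebraic extension, verify the two containment conditions by inspecting that formula, and finally show commutation by exhibiting $[D_1', D_2']$ as a $K$-linear derivation of the algebraic extension $K(a)/K$, which must vanish in characteristic zero.

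Let $g \in K[x]$ be the minimal polynomial of $a$ over $K$; since $\operatorname{char} K = 0$, we have $g'(a) \neq 0$. For each $i \in \{1,2\}$, the element $a$ is algebraic over $L_i$, and $D_i$ extends uniquely to a derivation $D_i' \colon L_i(a) \to M$. Applying $D_i'$ to the identity $g(a) = 0$ forces
$$
D_i'(a) \;=\; -\,\frac{g^{D_i}(a)}{g'(a)},
$$
where $g^{D_i}$ denotes the polynomial obtained by applying $D_i$ to the coefficients of $g$; together with the Leibniz rule this determines $D_i'$ on all of $L_i(a)$ and gives uniqueness. To see $D_1'(K(a)) \subseteq L_2(a)$, I write an arbitrary element of $K(a)$ as $h(a)$ for some $h \in K[x]$ and expand
$$
D_1'(h(a)) \;=\; h^{D_1}(a) + h'(a)\,D_1'(a).
$$
The first summand has coefficients in $D_1(K) \subseteq L_2$; the second is the product of $h'(a) \in K(a) \subseteq L_2(a)$ with $D_1'(a) \in L_2(a)$, where the latter inclusion holds because the numerator $g^{D_1}(a)$ has coefficients in $L_2$ and the denominator $g'(a)$ lies in $K(a)$. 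A symmetric argument yields $D_2'(K(a)) \subseteq L_1(a)$.

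For commutation on $K(a)$, I set $\Delta := D_1' \circ D_2' - D_2' \circ D_1'$; the containments just established guarantee that both compositions are well-defined as maps $K(a) \to M$. A standard Leibniz-rule check shows that $\Delta$ is a derivation, and the hypothesis that $D_1$ and $D_2$ commute on $K$ (together with $D_i'|_K = D_i|_K$) forces $\Delta|_K = 0$. Hence $\Delta$ is a $K$-derivation from $K(a)$ to $M$; applying it to $g(a) = 0$ gives $g'(a)\,\Delta(a) = 0$, and since $g'(a) \neq 0$ we conclude $\Delta(a) = 0$, whence $\Delta \equiv 0$ on $K(a)$. The only delicate step in the proof is bookkeeping: ensuring each composition $D_i' \circ D_j'$ is defined, given the mismatched domains of $D_1'$ and $D_2'$. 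Once the containment conditions are in place, the commutator argument is essentially forced, because in characteristic zero any $K$-derivation out of the algebraic extension $K(a)/K$ must vanish identically.
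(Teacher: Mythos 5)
Your proof is correct and self-contained. Note first that the paper does not actually prove this lemma: it is imported verbatim as \citep[Lemma~4.2]{Pierce}, so there is no in-paper argument to compare against. Your argument follows the route one would expect: extend each $D_i$ to $L_i(a)$ by the usual minimal-polynomial formula, read off the containment $D_1'(K(a))\subseteq L_2(a)$ (and its mirror) from that formula, observe that the two containments are exactly what make both compositions $D_1'\circ D_2'$ and $D_2'\circ D_1'$ well-defined on $K(a)$, and then kill the commutator $\Delta$ by noting it is a $K$-linear derivation out of a separable algebraic extension. A couple of small points worth being explicit about if you write this up in full: (i) you use $g$, the minimal polynomial of $a$ over $K$, rather than over $L_i$; that is fine because $g(a)=0$, $g'(a)\neq 0$, and $g\in L_i[x]$ suffice to pin down $D_i'(a)$, but it deserves a sentence since the "standard" extension formula is usually stated with the minimal polynomial over the base of the derivation; (ii) the claim that $\Delta$ satisfies the Leibniz rule really does require the containments $D_2'(K(a))\subseteq L_1(a)$ and $D_1'(K(a))\subseteq L_2(a)$ at the intermediate step (to apply $D_1'$ to $D_2'(b)c + bD_2'(c)$ termwise), not merely to define the outer compositions — you gesture at this with "the only delicate step is bookkeeping," and it would be worth making the four-term expansion explicit. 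With those expansions written out, the proof is complete.
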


\begin{theorem}\label{ulti}
Let $L=K(a^\xi_i:(\xi,i)\in\Gamma(r))$ be a differential kernel over $K$. Suppose further that 

\smallskip
\begin{enumerate}
\item[$($\namedlabel{condition2}{$\sharp$}$)$] For every $(\tau,l)\in\gamma(L)\setminus\Gamma(r)$ and $1\leq i< j\leq m$ such that $(\tau-{\bf i},l) \; \text{ and } \; (\tau -{\bf j},l)$ are leaders, there exists a sequence of minimal leaders $(\eta_1,l),\dots,(\eta_s,l)$ such that $\eta_\ell\leq \tau-{\bf k_\ell}$, with $k_1=i$, $k_s=j$  and some $k_2,\dots,k_{s-1}$, and
\begin{equation}\label{propeq}
\deg \LUB(\eta_\ell,\eta_{\ell+1})\leq r \quad \text{ for } \ell=1,\dots,s-1.
\end{equation}
\end{enumerate}
\smallskip
Then the differential kernel $L$ has a principal realization.
\end{theorem}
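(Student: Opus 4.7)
The plan is to deduce Theorem \ref{ulti} from an inductive construction that builds, starting from $L = L^{(0)}$, a chain of generic prolongations $L^{(0)} \subseteq L^{(1)} \subseteq L^{(2)} \subseteq \cdots$ with $L^{(q)}$ of length $r+q$ and with commuting derivations extending those of $L$. Since every $L^{(q+1)}$ is a generic prolongation (hence preserves the minimal leaders of the previous kernel), the union $M = \bigcup_q L^{(q)}$ will be a differential field extension of $K$ in which $(a_1^{\mathbf{0}},\ldots,a_n^{\mathbf{0}})$ is a principal realization of $L$. The entire content of the theorem therefore lies in the inductive step: from $L^{(q)}$ construct $L^{(q+1)}$.

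I carry out the inductive step by adjoining the new indeterminates $a_l^\xi$, $\deg \xi = r+q+1$, one at a time in the order $\unlhd$. At each $(\xi,l)$ I am in one of three cases. (a) No $(\xi-\mathbf{k},l)$ with $\xi - \mathbf{k} \ge 0$ is a leader of the field built so far; then $a_l^\xi$ is adjoined as a transcendental and each $D_k$ is extended by the rule $D_k a_l^{\xi-\mathbf{k}} = a_l^\xi$. (b) Exactly one $\mathbf{k}$ makes $(\xi-\mathbf{k},l)$ a leader; then $a_l^\xi$ is determined by the algebraic differentiation formula (the same identity used in Lemma \ref{prolon}), and Lemma \ref{Pierce42} allows the system of derivations to be extended to the new element. (c) Several directions $\mathbf{k}_1,\ldots,\mathbf{k}_t$ make $(\xi-\mathbf{k}_j,l)$ a leader; then I must prove that the values of $a_l^\xi$ forced by the different $D_{k_j}$'s coincide. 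Only (c) is non-routine, and it is precisely where condition \eqref{condition2} enters.

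To dispose of (c), fix a pair of directions $\mathbf{k}_p \neq \mathbf{k}_{p'}$ and choose minimal leaders $(\eta_p, l)$ and $(\eta_{p'}, l)$ of $L$ lying below $(\xi-\mathbf{k}_p,l)$ and $(\xi-\mathbf{k}_{p'},l)$ respectively (they exist because every leader sits above a minimal leader, and the minimal leaders of $L^{(q)}$ are those of $L$). If $\eta_p = \eta_{p'}$, the two forced values agree already from the commutativity of the $D_k$'s on $L^{(q)}$. Otherwise, consider $\tau := \LUB(\eta_p, \eta_{p'}) \in \gamma(L)$. When $\deg \tau \le r$, the relevant algebraic identities lie inside $L$, where the derivations commute, so the forced values are reconciled. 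When $\deg \tau > r$, apply condition \eqref{condition2} to $(\tau, l)$ with the pair $(k_p, k_{p'})$: this produces a chain of minimal leaders of $L$ starting at $\eta_p$, ending at $\eta_{p'}$, and whose consecutive least upper bounds all lie in $\Gamma(r)$, i.e.\ inside $L$. Commutativity of the relevant pair of derivations on $L$, applied at each such $\LUB$ and extended step by step via Lemma \ref{Pierce42}, transports equality of forced values along the chain; iterating reconciles the values at $\mathbf{k}_p$ and $\mathbf{k}_{p'}$.

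The hard part will be making this bridging argument fully rigorous: identifying the correct minimal leader below each forced direction, verifying that the hypotheses of \eqref{condition2} are actually met for the auxiliary $\tau$ (i.e.\ that $(\tau - \mathbf{k}_p, l)$ and $(\tau - \mathbf{k}_{p'}, l)$ are indeed both leaders of $L$, which is where the incomparability of the minimal leaders must be exploited), and carefully tracking that the successive algebraic differentiation identities chain together across each link of the bridging sequence. Once the inductive step is complete, the genericity of each $L^{(q+1)}$ is automatic from the observation that every newly adjoined algebraic element arises from an already-existing leader, so no new minimal leader can appear; the principal realization is then obtained as the union of the chain.
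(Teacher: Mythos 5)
Your overall scaffolding (build a chain of generic prolongations degree by degree, reduce everything to well-definedness of the value forced at each new coordinate, reconcile different forcing directions via Lemma \ref{Pierce42}) is the same as the paper's, and cases (a), (b) are handled correctly. The genuine gap is in case (c), and it is exactly where you flag uncertainty.

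You apply condition \eqref{condition2} to the auxiliary point $\tau := \LUB(\eta_p,\eta_{p'})$, where $\eta_p, \eta_{p'}$ are \emph{arbitrarily chosen} minimal leaders below $(\xi - {\bf k}_p, l)$ and $(\xi - {\bf k}_{p'}, l)$ respectively. But the hypotheses of \eqref{condition2} at $(\tau, l)$ require $(\tau - {\bf k}_p, l)$ and $(\tau - {\bf k}_{p'}, l)$ to be leaders of $L$, and incomparability of $\eta_p, \eta_{p'}$ does \emph{not} guarantee this. Concretely: if $\eta_p(k_p) = \eta_{p'}(k_p)$, then $\tau(k_p) = \eta_p(k_p)$ and neither $\eta_p$ nor $\eta_{p'}$ lies $\leq \tau - {\bf k}_p$, so unless some third minimal leader happens to fit below $\tau - {\bf k}_p$ the hypothesis fails. (Incomparability says the two leaders disagree in both directions across \emph{some} pair of coordinates, not that they disagree strictly in the particular coordinate $k_p$.) For example $\eta_p = (1,2,0)$, $\eta_{p'} = (1,0,2)$, $k_p = 1$ gives $\tau - {\bf 1} = (0,2,2)$, which dominates no minimal leader if these are the only two. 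So \eqref{condition2} cannot be invoked at $\tau$, and the bridging chain you want does not exist by your appeal.

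The fix is the one the paper makes: do not detour through the LUB of two chosen minimal leaders; apply \eqref{condition2} directly to the point $\xi$ being adjoined. The case split in the paper is not on whether $\deg\LUB(\eta_p,\eta_{p'}) \le r$ but on whether $(\xi, l)$ itself belongs to $\gamma(L)$. If it does, the hypotheses of \eqref{condition2} at $(\xi,l)$ are \emph{exactly} what case (c) hands you (the two directions $k_p, k_{p'}$ that force the value are, by assumption, leader directions below $\xi$), and the chain of $\eta_\ell$'s it produces sits directly below the $\xi - {\bf k}_\ell$'s. If $(\xi,l) \notin \gamma(L)$, then \emph{every} pair of minimal leaders below $\xi$ has $\LUB$ of degree strictly less than $\deg\xi$, which is all the reconciliation step of the Claim uses (it needs $\deg\pi < \deg\xi$, not $\deg\pi \le r$), so a single application of Lemma \ref{Pierce42} through a common smaller coordinate suffices. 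Your plan can be repaired by replacing the ``consider $\tau := \LUB(\eta_p,\eta_{p'})$'' step with this direct application at $\xi$.
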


\begin{remark} \
\begin{enumerate}
\item One can check that condition \eqref{condition1} of Theorem \ref{Pierce43} implies condition \eqref{condition2}. On the other hand, if $m = 2$, $n=1$, $r=2$, and the only minimal leader of $L$ is $(2,0)$, then condition \eqref{condition1} does \emph{not} hold; however, condition \eqref{condition2} holds trivially. Thus, indeed \eqref{condition2} is a \emph{weaker} condition on the minimal leaders.
\item It is worth pointing out that the converse of Theorem~\ref{ulti} does not generally hold (i.e., \eqref{condition2} is not a necessary condition for the existence of principal realizations). For instance, if $m=2$, $n=1$, $r=1$, and $a^{(1,0)}=a^{(0,1)}=0$, then $L$ has a principal realization but \eqref{condition2} does not hold.
\end{enumerate}
\end{remark}

\begin{proof}
We construct the principal realization recursively. Let $(\tau,l)\in\Nn$ with $\deg\tau>r$.  We want to specify a value for $a^\tau_l$. We assume that we have defined all $a^{\xi}_i$, where $(\xi,i) \lhd (\tau,l)$, such that the field extension
$$
K(a^{\xi}_i: (\xi,i) \lhd (\tau,l))
$$
is a generic prolongation of $L$.

If $(\tau-{\bf i},l)$ is not a leader for all $1\leq i\leq m$, then set $a^\tau_l$ to be transcendental over $K(a^{\xi}_i: (\xi,i) \lhd (\tau,l))$ and define $D_ia_l^{\tau - {\bf i}} := a_l^\tau$. Now, if there is an $i$ such that $(\tau-{\bf i},l)$ is a leader, then the algebraicity of $a_l^{\tau-{\bf i}}$ over $K(a^{\xi}_i:(\xi,i)\lhd (\tau -{\bf i}, l))$ determines what the value of $a^\tau_l$ must be; more precisely, the minimal polynomial of $a^{\tau-{\bf i}}_l$ determines the value $D_ia^{\tau-{\bf i}}_l$, and then we have to set $a^\tau_l:=D_ia^{\tau-{\bf i}}_l$. All we need to check is that if there is another $j$ such that $(\tau-{\bf j},l)$ is a leader, then the value $D_ja^{\tau-{\bf j}}_l$ (determined by the minimal polynomial of $a^{\tau-{\bf j}}_l$) is equal to $D_ia_l^{\tau-{\bf i}}$. This will imply that the value for $a_l^\tau$ is well-defined. 

We now check that indeed $D_ia_l^{\tau-{\bf i}}=D_ja_l^{\tau-{\bf j}}$. Assume for now that $(\tau,l)\in \gamma(L)$ (the other case will be considered below). Condition \eqref{condition2} guarantees the existence of a sequence of minimal leaders $(\eta_1,l),\dots,(\eta_s,l)$ such that $\eta_\ell\leq \tau-{\bf k}_\ell$, with $k_1=i$, $k_s=j$ and some $k_2,\dots,k_{s-1}$, and satisfying (\ref{propeq}). 

\medskip
\noindent {\bf Claim.} For every $1\leq \ell\leq s-1$, we have $D_{k_\ell} a_l^{\tau-{\bf k_\ell}}= D_{k_{\ell+1}}a_l^{\tau-{\bf k}_{\ell+1}}.$

\smallskip
\noindent {\it Proof of Claim.} If $k_\ell = k_{\ell +1}$, then the statement holds trivially. Let $k_{\ell}\neq k_{\ell+1}$ and $\pi=\LUB(\eta_\ell,\eta_{\ell+1})$. By (\ref{propeq}), we have $\deg \pi\leq r< \deg \tau$. In particular, there is $1\leq k\leq m$ such that $\eta_\ell(k)\leq \pi(k) < \tau(k)$, where $\xi(k)$ denotes the $k$-entry of $\xi$. Since $k_\ell\neq k_{\ell+1}$, either $k\neq k_\ell$ or $k\neq k_{\ell+1}$; without loss of generality, we assume that $k\neq k_{\ell}$. We now prove that $(\tau-{\bf k_\ell}-{\bf k},l)$ is a leader. Since $\eta_\ell\leq \tau-{\bf k_\ell}$, $\eta_\ell(k)<\tau(k)$, and $k\neq k_\ell$, we get that $\eta_\ell\leq \tau-{\bf k_\ell}-{\bf k}$. So, since $(\eta_\ell,l)$ is a (minimal) leader, $(\tau-{\bf k_\ell}-{\bf k},l)$ is also a leader. This implies by Lemma~\ref{Pierce42} that the derivations $D_{k_\ell}$ and $D_k$ commute on $a_l^{\tau-{\bf k_\ell}-{\bf k}}$ 
and so
$$
D_{k_\ell}a_l^{\tau-{\bf k_\ell}}=D_{k_\ell}D_ka_l^{\tau-{\bf k_\ell}-{\bf k}}=D_{k}D_{k_\ell}a_l^{\tau-{\bf k_\ell}-{\bf k}} =D_ka_l^{\tau-{\bf k}}.
$$
Now, if $k_{\ell+1}=k$ the result follows from the above equalities. On the other hand, if $k_{\ell+1}\neq k$, we can proceed as before (using the same $k$) to show that $(\tau-{\bf k}_{\ell+1}-{\bf k},l)$ is leader, and thus obtain
$$
D_{k_{\ell+1}}a_l^{\tau-{\bf k}_{\ell+1}}=D_ka_l^{\tau-{\bf k}}.
$$
This proves the claim.

\medskip

It now follows from the claim, since $k_1=i$ and $k_s=j$, that $D_ia_l^{\tau-{\bf i}}=D_ja_l^{\tau-{\bf j}}$, as desired. Now, for the case when $(\tau,l)\notin \gamma(L)$. Let $(\eta_1,l)$ and $(\eta_2,l)$ be any pair of minimal leaders such that $\eta_1\leq \tau-{\bf i}$ and $\eta_2\leq \tau-{\bf j}$. By definition of $\gamma(L)$, we have that $\deg \LUB(\eta_1,\eta_2)< \deg \tau$. One can now proceed as in the proof of the claim, with $\pi=\LUB(\eta_1,\eta_2)$, to show that $D_ia_l^{\tau-{\bf i}}=D_ja_l^{\tau-{\bf j}}$.

One then continues this recursive construction with the tuple succeeding $\tau$ (in the $\lhd$ order). Note that, in each step of this construction, we do not add new minimal leaders, and so the prolongations we obtain at each step still satisfy condition \eqref{condition2} and are generic. By the genericity of each prolongation, this construction yields the desired principal realization of $L$. 
\end{proof}

Let $\bar\alpha=(\alpha_1,\dots,\alpha_k)$ be an antichain sequence of $\Nn$. For each integer $r\geq 0$, let 
$$
\Gamma_{\bar \alpha}(r)=\{\alpha\in\bar\alpha:\, \alpha\in\Gamma(r)\}.
$$
We define $D_{r,\bar\alpha}$ as the smallest integer $p\geq r$ with the following property:

\medskip
\begin{enumerate}
\item[$($\namedlabel{condition3}{$\sharp'$}$)$] For every $(\tau,l)\in\gamma(\Gamma_{\bar\alpha}(p))\setminus\Gamma(p)$ and $1\leq i< j\leq m$ such that $(\tau-{\bf i},l)\geq \beta_1$ and $(\tau -{\bf j},l)\geq \beta_2$ for some $\beta_1,\beta_2\in \Gamma_{\bar\alpha}(p)$, there exists a sequence $(\eta_1,l),\dots,(\eta_s,l)$ in $\Gamma_{\bar\alpha}(p)$ such that $\eta_\ell\leq \tau-{\bf k_\ell}$, with $k_1=i$, $k_s=j$  and some $k_2,\dots,k_{s-1}$, and
\begin{equation}\label{propeq2}
\deg \LUB(\eta_\ell,\eta_{\ell+1})\leq p \quad \text{ for } \ell=1,\dots,s-1.
\end{equation}
\end{enumerate}
\smallskip

Note that if $h\geq r$ is such that $\bar\alpha\subseteq \Gamma(h)$, then $D_{r,\bar\alpha}\leq 2h$. Finally, we set 
$$
C_{r,m}^n := \max\{D_{r,\bar\alpha}: \bar \alpha \text{ is an antichain sequence of } \Nn\}.
$$
In Section \ref{algorithm} we will see that in fact $C_{r,m}^n<\infty$.

\begin{remark}\label{changevar}
Note that, given $r\geq 0$ and an antichain sequence $\bar \alpha$ of $\Nn$, $D_{s,\bar\alpha}=D_{r,\bar\alpha}$ for any $r\leq s\leq D_{r,\bar\alpha}$. 
\end{remark}

\begin{example}
Fix $m=2$ and $n=1$. Consider the case $r=2$ and $\bar\alpha=((2,0),(1,1),(0,2))$.  For all distinct $\xi,\zeta \in \Gamma_{\bar\alpha}(2)$ we have $\deg \LUB(\xi,\zeta) > 2$, so $D_{r,\bar\alpha}\geq 3$.  Note that $\gamma(\Gamma_{\bar\alpha}(3))\setminus \Gamma(3)=\{(2,2)\}$. Setting $\eta_1=(2,0)$, $\eta_2=(1,1)$, $\eta_3=(0,2)$, we see that $\deg\LUB(\eta_\ell,\eta_{\ell+1})\leq 3$ for $\ell=1,2$. This witnesses that $D_{r,\bar\alpha}=3$. In Proposition~\ref{basic}(3), we will see that for any $r$ and $\bar\alpha$ we have $D_{r,\bar\alpha}\leq 2r$.
\end{example}

We can now prove

\begin{theorem}\label{construct}
Let $r$ be a nonnegative integer. Suppose $L = K(a^{\xi}_i: (\xi,i)\in \Gamma(r))$ is a differential kernel over $K$. If $L$ has a prolongation of length $C_{r,m}^n$, then there is some $r\leq h\leq C_{r,m}^n$ such that the differential kernel $K(a^\xi_i:(\xi,i)\in\Gamma(h))$ has a principal realization. In particular, $L$ has a regular realization and so 
$$
T_{r,m}^n\leq C_{r,m}^n.
$$
\end{theorem}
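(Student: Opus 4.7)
The plan is to start from a prolongation of length $C:=C_{r,m}^n$ and ``step down'' to an intermediate prolongation whose minimal leaders satisfy hypothesis \eqref{condition2} of Theorem~\ref{ulti}. Concretely, assume $L$ has a prolongation $L_C$ of length $C$. For every $r\leq h\leq C$, restriction yields a differential kernel $L_h := K(a_i^\xi:(\xi,i)\in\Gamma(h))$ of length $h$ sitting inside $L_C$; let $\bar\alpha_h$ be its sequence of minimal leaders ordered by $\unlhd$. The target value of $h$ will be $h := D_{r,\bar\alpha_C}$, which satisfies $r\leq h\leq C_{r,m}^n$ by the very definition of $C_{r,m}^n$.

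The first key step is the monotonicity identity
\[
\bar\alpha_h \;=\; \Gamma_{\bar\alpha_C}(h) \qquad \text{for every } r\leq h\leq C.
\]
I would prove both inclusions from the definition of leader: any minimal leader of $L_h$ remains a leader of $L_C$ (the witnessing algebraic relation is inherited), and any strictly smaller leader of $L_C$ would have degree at most $h$ and hence already be a leader of $L_h$, contradicting minimality there; the reverse inclusion is symmetric, since any $\alpha\in\bar\alpha_C$ with $\deg\alpha\leq h$ is a leader of $L_h$ and a strictly smaller leader of $L_h$ would contradict minimality of $\alpha$ in $L_C$.

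With this identity in hand, set $h:=D_{r,\bar\alpha_C}$ and read off condition \eqref{condition3} for $\bar\alpha_C$ at $p=h$. Using $\Gamma_{\bar\alpha_C}(h)=\bar\alpha_h$, one gets $\gamma(\Gamma_{\bar\alpha_C}(h))=\gamma(L_h)$; the clause ``$(\tau-{\bf i},l)\geq \beta$ for some $\beta\in\Gamma_{\bar\alpha_C}(h)$'' is precisely the statement that $(\tau-{\bf i},l)$ is a leader of $L_h$ (and similarly for ${\bf j}$); and the sequence in $\Gamma_{\bar\alpha_C}(h)=\bar\alpha_h$ produced by \eqref{condition3} is a sequence of minimal leaders of $L_h$ as required by \eqref{condition2}. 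Thus \eqref{condition3} for $\bar\alpha_C$ at $p=h$ is literally \eqref{condition2} for the differential kernel $L_h$ (with $h$ in place of $r$), and Theorem~\ref{ulti} applied to $L_h$ delivers a principal realization of $L_h$.

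A principal realization of $L_h$ is a fortiori a regular realization; since the generators of $L=L_r$ form a subtuple of those of $L_h$, this restricts to a regular realization of $L$, giving $T_{r,m}^n\leq C_{r,m}^n$. The main obstacle, and essentially the only nontrivial part of the argument, is the careful bookkeeping in the third step: verifying that the purely combinatorial condition \eqref{condition3} on the antichain sequence $\bar\alpha_C$ at the cutoff $h$ translates verbatim into the differential-algebraic condition \eqref{condition2} on the kernel $L_h$. The monotonicity of minimal leaders along the chain $L_r\subseteq L_{r+1}\subseteq\cdots\subseteq L_C$ is exactly the technical fact that makes this translation go through.
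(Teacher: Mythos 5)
Your proof is correct and follows the paper's argument essentially verbatim: you pick the prolongation of length $C_{r,m}^n$, read off its minimal leader antichain $\bar\alpha_C$, set $h:=D_{r,\bar\alpha_C}$ so that $r\le h\le C_{r,m}^n$ by definition, and observe that the minimal leaders of $L_h$ are exactly $\Gamma_{\bar\alpha_C}(h)$, which turns condition~\eqref{condition3} at $p=h$ into condition~\eqref{condition2} for $L_h$ and lets Theorem~\ref{ulti} produce the principal realization. The one addition you make — explicitly proving the monotonicity identity $\bar\alpha_h=\Gamma_{\bar\alpha_C}(h)$ rather than merely asserting it as the paper does — is welcome extra detail but does not change the route.
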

\begin{proof}
Let $\bar\alpha=(\alpha_1,\dots,\alpha_k)$ be the antichain sequence of minimal leaders of the prolongation 
$$
K(a^\xi_i: (\xi,i) \in\Gamma(C_{r,m}^n)).
$$
By definition of $D_{r,\bar\alpha}$ (see property \eqref{condition3} above), if we set $h:=D_{r,\bar\alpha}$, then $h$ has the following three properties:
\begin{enumerate}
\item [(i)] $h\geq r$
\item [(ii)] $C_{r,m}^n\geq h$ and so the field extension $L':=K(a^\xi_i:(\xi,i)\in\Gamma(h))$ is a differential kernel over $K$
\item [(iii)] Since $\Gamma_{\bar\alpha}(h)$ is equal to the set of minimal leaders of $L'$, we have that for every $(\tau,l)\in\gamma(L')\setminus\Gamma(h)$ and $1\leq i<j\leq m$ such that $\tau-{\bf i},\tau-{\bf j}$ are leaders of $L'$, there exists a sequence $(\eta_1,l),\dots,(\eta_s,l)$ of minimal leaders of $L'$ such that $\eta_\ell\leq \tau-{\bf k}_\ell$, with $k_1=i$, $k_s=j$ and some $k_2,\dots,k_{s-1}$, and
$$
\deg \LUB(\eta_\ell,\eta_{\ell+1})\leq h\quad \text{ for } \ell=1,\dots s-1.
$$
\end{enumerate}
Property (iii) is precisely saying that $L'$ satisfies condition \eqref{condition2} of Theorem~\ref{ulti}. Thus, properties (ii) and (iii), together with Theorem \ref{ulti}, yield a principal realization of $L'$. Finally, property (i) implies that this principal realization of $L'$ is a regular realization of $L$.
\end{proof}

\newpage
\begin{remark} \
\begin{enumerate}
\item So far, to the authors' knowledge, there are \emph{no} known cases where $T_{r,m}^n<C_{r,m}^n$. It is thus an interesting problem to determine whether or not these two numbers are equal. Such open questions on the optimality of $C_{r,m}^n$ are part of an ongoing project.
\item It could be interesting to understand how conditions \eqref{condition2} and \eqref{condition3} compare to the Buchberger chain condition (as a refinement of Buchberger's algorithm to compute a Gr\"obner basis). A more detailed analysis of the construction of the bound $C_{r,m}^n$ could potentially lead to improvements in the performance of the Rosenfeld-Gr\"obner algorithm. In this direction we encourage the reader to compare Theorem~\ref{Pierce43} above with \citep[Theorem 3]{Bou} and Theorem~\ref{ulti} above with \citep[Proposition 5]{Bou}.
\end{enumerate}
\end{remark}

In Sections \ref{Macaulay} and \ref{algorithm} we work towards building a recursive algorithm that computes the value of $C_{r,m}^n$. For now, we prove some basic cases.

\begin{proposition}\label{basic} \
\begin{enumerate}
\item $C_{0,m}^n=0$. 
\item For any $r> 0$, $C_{r,1}^n=r$.
\item For any $r>0$, $C_{r,2}^1=2r$. Consequently, by Example \ref{examr}, $T_{r,2}^1=2r$.
\end{enumerate}
\end{proposition}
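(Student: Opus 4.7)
The plan for parts (1) and (2) is to show that condition \eqref{condition3} holds vacuously. For (1), any antichain $\bar\alpha$ in $\Nn$ contains at most one element of the form $({\bf 0},i)$ for each $i\in\n$, so $\Gamma_{\bar\alpha}(0)$ admits no pair $(\eta,i),(\tau,i)$ with $\eta\neq\tau$; hence $\gamma(\Gamma_{\bar\alpha}(0))=\emptyset$ and $D_{0,\bar\alpha}=0$. For (2), the quantifier $1\le i<j\le m=1$ is empty, so \eqref{condition3} holds trivially at $p=r$ for every antichain, giving $D_{r,\bar\alpha}=r$.

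For the lower bound in part (3), consider the antichain $\bar\alpha=((r,0),(0,r))$ of $\N^2\times\{1\}$. For $r\le p<2r$ we have $\Gamma_{\bar\alpha}(p)=\bar\alpha$ and $\gamma(\Gamma_{\bar\alpha}(p))=\{(r,r)\}$, whose unique element has degree $2r>p$. For $\tau=(r,r)$, any element $\le\tau-{\bf 1}=(r-1,r)$ of $\Gamma_{\bar\alpha}(p)$ must be $(0,r)$, and any element $\le\tau-{\bf 2}=(r,r-1)$ must be $(r,0)$; therefore any admissible sequence has $\eta_1=(0,r)$, $\eta_s=(r,0)$, and at some step must transition between these two distinct antichain elements, yielding an LUB of degree $2r>p$. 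Hence \eqref{condition3} fails. At $p=2r$ the set $\gamma(\Gamma_{\bar\alpha}(2r))\setminus\Gamma(2r)$ is empty and \eqref{condition3} holds vacuously, so $D_{r,\bar\alpha}=2r$ and therefore $C_{r,2}^1\ge 2r$.

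For the upper bound $C_{r,2}^1\le 2r$, fix an arbitrary antichain $\bar\alpha$. Antichain elements of degree greater than $2r$ do not contribute to $\Gamma_{\bar\alpha}(p)$ for any $p\le 2r$, so we may assume $\bar\alpha\subseteq\Gamma(2r)$. If $\bar\alpha\subseteq\Gamma(r)$, then every incomparable pair $(a,b),(a',b')\in\bar\alpha$ with $a<a'$ and $b>b'$ yields $\LUB=(a',b)$ of degree $a'+b\le(r-b')+(r-a)\le 2r$; hence $\gamma(\bar\alpha)\subseteq\Gamma(2r)$ and \eqref{condition3} holds vacuously at $p=2r$. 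If $\Gamma_{\bar\alpha}(r)=\emptyset$, then \eqref{condition3} holds vacuously at $p=r$ and $D_{r,\bar\alpha}=r$. In the remaining mixed case, the central observation is that whenever $(\tau,1)\in\gamma(\Gamma_{\bar\alpha}(p))\setminus\Gamma(p)$ comes from a pair $\alpha_u,\alpha_v$ (with $u<v$ in antichain order) that is not consecutive within $\Gamma_{\bar\alpha}(p)$, any intermediate $\alpha_w\in\Gamma_{\bar\alpha}(p)$ with $u<w<v$ gives the constant sequence $\eta_1=\eta_2=\alpha_w$ (with $k_1=1,k_2=2$), since $\alpha_w<\tau$ and $\LUB(\alpha_w,\alpha_w)=\alpha_w\in\Gamma(p)$.

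The main obstacle is therefore the case of consecutive pairs $\alpha_u,\alpha_{u+1}$ in $\Gamma_{\bar\alpha}(2r)$ whose LUB has degree exceeding $2r$. A short calculation shows that at most one of the two elements can lie in $\Gamma(r)$: if both satisfied $a+b\le r$ (with $\alpha_u=(a_u,b_u)$, $\alpha_{u+1}=(a_{u+1},b_{u+1})$, $a_u<a_{u+1}$, $b_u>b_{u+1}$), then $a_{u+1}+b_u\le(r-b_{u+1})+(r-a_u)\le 2r$, contradicting the failure. One then selects $p\in[r,2r)$ strictly below the degree of the higher-degree element in the troublesome pair; at this $p$ that element is absent from $\Gamma_{\bar\alpha}(p)$, so the offending $\tau$ is removed from $\gamma(\Gamma_{\bar\alpha}(p))$, while the remaining $\tau$'s are handled by the intermediate-element argument or by the vacuous cases applied to the reduced antichain. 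Iterating this reduction produces some $p\in[r,2r]$ at which \eqref{condition3} holds, completing the proof of $D_{r,\bar\alpha}\le 2r$ and hence $C_{r,2}^1=2r$.
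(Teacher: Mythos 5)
Parts (1) and (2) are fine and essentially the same as the paper's (both are trivial; your observation that $\gamma(\Gamma_{\bar\alpha}(p))$ is empty in each case matches the paper's reasoning). The lower bound in part (3) uses the same extremal antichain $((r,0),(0,r))$ as the paper and is correct; you merely spell out why no $p<2r$ works, which the paper dismisses with a one-line remark.

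The upper bound in part (3) is where there is a genuine gap, and your route is also different from the paper's. You reduce to showing that for every antichain $\bar\alpha\subseteq\Gamma(2r)$ of $\N^2$ there is some $p\in[r,2r]$ such that every pair $\alpha_u,\alpha_{u+1}$ consecutive in $\Gamma_{\bar\alpha}(p)$ has $\deg\LUB(\alpha_u,\alpha_{u+1})\le p$ (your ``intermediate element'' observation correctly turns the whole question into one about consecutive pairs). Your cases $\bar\alpha\subseteq\Gamma(r)$ and $\Gamma_{\bar\alpha}(r)=\emptyset$ are fine. For the mixed case you propose an iterative descent: if at $p=2r$ a consecutive pair has LUB degree $>2r$, remove the element of larger degree by lowering $p$ below it, and repeat. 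The crucial justification, that one of the two elements must have degree $>r$, is only established at $p=2r$ (where $\deg\LUB>2r$ forces it). After one descent you are at some $p'<2r$, where a consecutive pair in $\Gamma_{\bar\alpha}(p')$ can be troublesome with \emph{both} elements in $\Gamma(r)$: their LUB has degree at most $2r$, yet still $>p'$. Neither can be removed by lowering $p$ further (since $p\ge r$ is required), so the descent stalls, and the phrase ``the remaining $\tau$'s are handled by the intermediate-element argument or by the vacuous cases'' is not substantiated: lowering $p$ tightens the LUB constraint and can merge previously nonconsecutive elements into new troublesome consecutive pairs. In short, it is not shown that the iteration terminates at a value of $p$ where condition \eqref{condition3} holds.

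The paper closes this gap by a different argument: assuming $D_{r,\bar\alpha}>2r$, it tracks $\mathcal M_{\bar\alpha}(i)$, the number of degree-$i$ points lying above some element of $\bar\alpha$, and uses the block characterization of minimal Hilbert--Samuel growth in $\N^2$ to show $\mathcal M_{\bar\alpha}(i+1)\ge\mathcal M_{\bar\alpha}(i)+2$ for $r\le i<2r$, together with $\mathcal M_{\bar\alpha}(r)\ge 2$. This forces $\mathcal M_{\bar\alpha}(2r)\ge 2r+2$, contradicting the fact that $\N^2$ has only $2r+1$ points of degree $2r$. Your case analysis could perhaps be repaired (e.g., by showing the descent is confined to a monotone quantity), but as written the termination claim is an unproven assertion rather than an argument.
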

\begin{proof} \

\noindent (1) This is clear.

\smallskip
\noindent (2) For any antichain sequence $\bar \alpha$ of $\mathbb N\times \n$, condition \eqref{condition3} above is trivially satisfied for any integer $p\geq 0$ since in this case $\gamma(\Gamma_{\bar \alpha}(p))=\emptyset$. Hence, $D_{r,\bar\alpha}= r$, and so $C_{r,1}^n=r$.

\smallskip
\noindent (3) First, to see that $C_{r,2}^1\geq 2r$, consider the antichain sequence $\bar \alpha=((r,0), (0,r))$ of $\N^2$. Since $\gamma(\bar\alpha)=\{\LUB((r,0),(0,r))\}=\{(r,r)\}$, the integer $2r$ satisfies condition \eqref{condition3}, and it is indeed the smallest one as $\bar\alpha$ consists of exactly two elements. Hence, $D_{r,\bar\alpha}=2r$ and so $C_{r,2}^1\geq 2r$.

Now we prove $C_{r,2}^1\leq 2r$. Towards a contradicition assume there is an antichain sequence $\bar \alpha$ of $\N^2$ such that $D_{r,\bar\alpha}> 2r$. First, let us recall a basic fact about blocks of $\N^2$. Recall that a block of $\N^2$ is a subset of the form 
$$
\{(u_1,u_2),(u_1+1,u_2-1),\dots,(u_1+c,u_2-c)\}
$$
for some $u_1,u_2,c\in\N$. Suppose $B$ is a set of elements of $\N^2$ all of degree $d\geq 0$, and let $B'$ be those elements of degree $d+1$ which are $\geq$ some element in $B$. One can check that $|B'|\geq |B|+1$ and $|B'|=|B|+1$ if and only if $B$ is a block.

Now, for each integer $i\geq 0$, we let 
$$\mathcal M_{\bar \alpha}(i)=|\{\xi\in\N^2:\, \deg\xi=i \text{ and } \xi\geq \tau \text{ for some } \tau\in\bar\alpha\}|.$$
Note that $|\mathcal M_{\bar\alpha}(r)|\geq 2$. Indeed, if this were not the case the integer $r$ would satisfy condition \eqref{condition3} and so $D_{r,\bar\alpha}$ would equal $r$, contradicting the fact that $D_{r,\bar\alpha}> 2r$. We now claim that $|\mathcal M_{\bar\alpha}(i+1)|\geq |\mathcal M_{\bar\alpha}(i)|+2$ for $r\leq i<2r $. If this were not the case, then, as we are working in $\N^2$, $|\mathcal M_{\bar\alpha}(i+1)|=|\mathcal M_{\bar\alpha}(i)|+1$. However, as we pointed out above, the latter could only happen if $\mathcal M_{\bar\alpha}(i)$ is a block and $\bar\alpha$ has no elements of degree $i+1$.  But this would imply that the integer $i+1\leq 2r$ satisfies condition \eqref{condition3}, contradicting again the fact that $D_{r,\bar\alpha}>2r$. Putting the previous inequalities together we get $\M_{\bar\alpha}(i)\geq 2(i+1-r)$ for $r\leq i\leq 2r$. In particular, $\mathcal M_{\bar\alpha}(2r)\geq 2r+2$. However, this is impossible since the number of elements of degree $2r$ of $\N^2$ is 
$2r+1$, and so we 
have reached the desired contradiction.
\end{proof}

\section{On Macaulay's theorem}\label{Macaulay}

In this section we prove a key result on the Hilbert-Samuel function that will be used to derive Corollary \ref{HSdeg} below. This will then be used in Section~\ref{algorithm} to provide an algorithm that computes the value of $C_{r,m}^n$. 

Recall 
that we denote $\N^m$ equipped with the product order by $(\N^m,\le)$, and we denote $\N^m$ equipped with the (left) degree-lexicographic order by by $(\N^m,\unlhd)$. Let us start by recalling some basic notions (for details we refer the reader to \citep[Chap.4, \S2]{BrunsHerzog}). A subset $M$ of $\N^m$ is said to be \emph{compressed} if whenever $\xi,\eta\in \N^m$ and $\deg\xi=\deg\eta$ we have
$$
\left(\xi\in M \textrm{ and } \xi\lhd \eta\right)\implies \left( \eta\geq\zeta \text{ for some } \zeta \in M\right).
$$
On the other hand, if $d$ is a positive integer, $M$ is said to be a \emph{$d$-segment} of $\N^m$ if all the elements of $M$ have degree $d$ and, given $\xi,\eta \in \N^m$ with $\xi\lhd\eta$, if $\xi \in M$ then $\eta\in M$. We note that if $M$ is compressed and 
$$
N:=\{\xi\in\N^m:\deg\xi=d \text{ and }\xi\geq\zeta \text{ for some } \zeta\in M\},
$$
then $N$ is a $d$-segment of $\N^m$.

Given positive integers $a$ and $d$, one can write $a$ uniquely in the form
\begin{equation}\label{binrep}
a=\binom{k_d}{d}+\binom{k_{d-1}}{d-1}+\cdots+\binom{k_j}{j},
\end{equation}
where $k_d>k_{d-1}>\cdots > k_j\geq j\geq 1$ for some $j$. One refers to (\ref{binrep}) as the \emph{$d$-binomial representation} of $a$. Now define
$$
a^{\l d\r}:=\binom{k_d+1}{d+1}+\binom{k_{d-1}+1}{d}+\cdots +\binom{k_j+1}{j+1},
$$
and $0^{\l d\r}:=0$.  From \citep[Lemma 4.2.7]{BrunsHerzog}, we have the following property
\begin{equation}\label{sim}
a<b \implies a^{\l d\r} < b^{\l d\r}.
\end{equation}

We now recall the Hilbert-Samuel function: Given any subset $M$ of $\N^m$, we let $H_M:\N\to\N$ be defined as
$$
H_M(d)=|\{\xi\in \N^m:\, \deg\xi=d \textrm{ and } \xi\ngeq \eta \textrm{ for all }\eta\in M\}|.
$$
Macaulay's theorem on the Hilbert-Samuel function states the following (for a proof see Corollary 4.2.9 and Theorem 4.2.10(c) from \citep{BrunsHerzog}).

\begin{theorem}\label{Mac}
For any subset $M$ of $\N^m$, and $d>0$, we have that
$$
H_M(d+1)\leq H_M(d)^{\l d\r}.
$$
Moreover, if $M$ is compressed and $M\subseteq \Gamma(d)$ (i.e., $\deg\xi\leq d$ for all $\xi\in M$), then
$$
H_M(d+1)=H_M(d)^{\l d\r}.
$$
\end{theorem}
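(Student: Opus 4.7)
The strategy is to interpret $H_M$ as the Hilbert function of a monomial ideal and then invoke the classical Macaulay theorem via Clements-Lindstr\"om compression. Pick any field $k$, set $S := k[x_1,\ldots,x_m]$, identify each $\xi \in \N^m$ with the monomial $x^\xi \in S$, and let $I := \langle x^\eta : \eta \in M\rangle$. Then
$$U_d := \{\xi \in \N^m : \deg\xi = d \text{ and } \xi \ngeq \eta \text{ for all } \eta \in M\}$$
is a monomial $k$-basis of $(S/I)_d$, so $H_M(d) = |U_d| = \dim_k(S/I)_d$. In particular, $U := \bigcup_d U_d \subseteq \N^m$ is downward-closed under $\leq$, and writing $U_e := U \cap \{\xi : \deg\xi = e\}$ matches the paper's indexing.

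For the inequality $H_M(d+1) \leq H_M(d)^{\langle d\rangle}$ the plan is to follow the standard two-step argument. First, apply iterated compression swaps to replace $U$ by a downward-closed set $V \subseteq \N^m$ with $|V_e| = |U_e|$ for every $e$ and such that, in each degree $e$, the complement $\{\xi : \deg\xi = e\} \setminus V_e$ is an $e$-segment in the sense of the paper. Second, for such lex-segment complements, a direct combinatorial count using the $d$-binomial representation \eqref{binrep} of $|V_d|$ produces a $(d+1)$-segment of cardinality exactly $N(d+1) - |V_d|^{\langle d\rangle}$, where $N(e)$ denotes the number of degree-$e$ monomials in $m$ variables; any additional multiples arising from generators of degree $> d$ only enlarge the ``removed'' set, so one concludes $|V_{d+1}| \leq |V_d|^{\langle d\rangle}$. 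Combining the two steps yields the stated bound, and property \eqref{sim} can be used along the way to guarantee monotonicity of $a \mapsto a^{\langle d\rangle}$.

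For the equality statement assume $M$ is compressed and $M \subseteq \Gamma(d)$. The compression hypothesis, together with the remark immediately following the definition of compressed sets, says that the multiples of $M$ in each degree $e$ already form an $e$-segment, so $V = U$ and no compression is needed. The hypothesis $M \subseteq \Gamma(d)$ then ensures that no generators of $I$ appear in degree $d+1$, which is precisely the situation in which the shadow-count on lex-segments is sharp and yields $|U_{d+1}| = |U_d|^{\langle d\rangle}$ rather than a proper inequality.

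The main obstacle is the compression step itself: verifying that iterated swap operators on order ideals of $\N^m$ preserve each graded cardinality while driving the complement to an $e$-segment in every degree is the technical heart of Clements-Lindstr\"om, and the shadow-count on lex-segments becomes notation-heavy once the $d$-binomial representation is fully unfolded. Rather than reconstruct these arguments I would defer to \citep[Cor.~4.2.9, Thm.~4.2.10(c)]{BrunsHerzog}, which is the reference the authors give and where both the compression and the direct count are carried out in detail.
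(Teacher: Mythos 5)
The paper does not prove Theorem~\ref{Mac}; it refers the reader directly to \citep[Corollary 4.2.9, Theorem 4.2.10(c)]{BrunsHerzog}, which is precisely the citation you end on. Your surrounding sketch (reading $H_M$ as the Hilbert function of $S/\langle x^\eta:\eta\in M\rangle$, compressing to a lex-segment, and observing that the compressed hypothesis together with $M\subseteq\Gamma(d)$ forces sharpness) is an accurate outline of the argument in that reference, so the proposal is correct and takes the same route as the paper.
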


We will also make use of the function $S_M$ which is complementary to the Hilbert-Samuel function; that is, for any subset $M$ of $\N^m$, $S_M:\N\to\N$ is given by 
$$
S_M(d) = |\{\xi \in \N^m : \deg \xi = d \text{ and } \xi \geq \eta \text{ for some } \eta \in M\}|.
$$
Note that 
\begin{equation}\label{relSM1}
S_M(d)+H_M(d)=|\{\xi\in\N^m:\, \deg\xi=d\}|=\binom{m-1+d}{d}.
\end{equation}

For any $M\subseteq \N^m$, we define $(1,\dots,m)\cdot M$ to be the set containing all $m$-tuples of the form $(u_1,\dots,u_j+1,\dots,u_m)$ with $(u_1,\dots,u_m) \in M$ and $j = 1,\dots,m$. More generally, for a sequence of integers $1\leq i_1<\cdots <i_s\leq m$, we let $(i_1,\dots,i_s)\cdot M$ be the set $(u_1,\dots,u_{i_j}+1,\dots,u_m)$ with $(u_1,\dots,u_m)\in M$ and $j=1,\dots,s$. We now recall Macaulay's function $a^{(m)}$. For integers $a\geq 0$ and $d>0$, with $a\leq |\{\xi\in \N^m:\deg\xi=d\}|$, we let 
\begin{equation}\label{vala}
a^{(m)}:=|(1,\dots,m)\cdot N_{a,d}|=S_{N_{a,d}}(d+1),
\end{equation}
where $N_{a,d}$ is the subset of $\N^m$ consisting of the $a$ largest elements of $\Gamma(d)$ with respect to $\unlhd$. Note that, by our assumption on $a$ and $d$, the set $N_{a,d}$ is a $d$-segment of $\N^m$ (as defined above); in particular, it is compressed. To justify our notation in (\ref{vala}), we must show that the value $a^{(m)}$ is independent of $d$. To that end, let $d'=d+p$, for a positive integer $p$. Clearly, $N_{a,d'}=(1)^p\cdot N_{a,d}$, where the latter denotes the set of $(u_1+p,\dots,u_m)$ with $(u_1,\dots,u_m)\in N_{a,d}$. Then we have 
$$
(1,\dots,m)\cdot N_{a,d'}=(1)^p\cdot\left( (1,\cdots,m)\cdot N_{a,d}\right),
$$
and hence $|(1,\dots,m)\cdot N_{a,d'}|=|(1,\dots,m)\cdot N_{a,d}|$, as desired.

As a consequence of the moreover clause of Macaulay's theorem (Theorem \ref{Mac}), for integers $b\geq 0$ and $d>0$, with $b\leq |\{\xi\in \N^m:\deg\xi=d\}|=\binom{m-1+d}{d}$, we have that
$$
b^{\l d\r}=|\{\xi\in\N^m: \deg \xi=d+1 \text{ and } \xi\notin (1,\dots,m)\cdot N_{a,d}\}|=H_{N_{a,d}}(d+1),
$$
where $a:=\binom{m-1+d}{d}-b$. This implies that $b^{\l d\r}=\binom{m+d}{d+1}-a^{(m)}$; in particular, for any $M$ we have
\begin{equation}\label{relSM2}
H_M(d)^{\l d\r}=\binom{m+d}{d+1} - S_M(d)^{(m)}.
\end{equation}
Thus, with the above notation, Theorem \ref{Mac} can be reformulated as

\begin{corollary}\label{varMac}
For any subset $M$ of $\N^m$, and $d>0$, we have that
$$
S_M(d+1)\geq S_M(d)^{(m)}.
$$
Moreover, if $M$ is compressed and $M\subseteq \Gamma(d)$, then
$$
S_M(d+1)=S_M(d)^{(m)}.
$$
\end{corollary}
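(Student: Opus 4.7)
The plan is to deduce Corollary~\ref{varMac} directly from Theorem~\ref{Mac} by translating the inequality on $H_M$ into the claimed inequality on $S_M$, using the two complementary identities \eqref{relSM1} and \eqref{relSM2} already established above. These identities were set up precisely to make this reformulation formal, so the corollary is really just Theorem~\ref{Mac} read through the dictionary $H_M \leftrightarrow S_M$ and $(-)^{\langle d \rangle} \leftrightarrow (-)^{(m)}$.

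Concretely, I would proceed as follows. By \eqref{relSM1} applied at degree $d+1$,
$$H_M(d+1) = \binom{m+d}{d+1} - S_M(d+1),$$
while by \eqref{relSM2},
$$H_M(d)^{\langle d \rangle} = \binom{m+d}{d+1} - S_M(d)^{(m)}.$$
Subtracting the common term $\binom{m+d}{d+1}$ from both sides of the inequality $H_M(d+1) \leq H_M(d)^{\langle d \rangle}$ supplied by Theorem~\ref{Mac} and flipping signs, one obtains exactly the desired $S_M(d+1) \geq S_M(d)^{(m)}$. For the moreover clause, the hypothesis that $M$ is compressed and $M \subseteq \Gamma(d)$ upgrades Theorem~\ref{Mac} to an equality, and the same two-line manipulation then yields $S_M(d+1) = S_M(d)^{(m)}$.

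There is essentially no real obstacle: the entire content of the corollary is already contained in Macaulay's theorem, and the proof consists of unwinding the relevant definitions. The only minor point to keep in mind is that the notations $(-)^{\langle d \rangle}$ and $(-)^{(m)}$ carry implicit range restrictions, but both $H_M(d)$ and $S_M(d)$ are bounded by $\binom{m-1+d}{d}$ (they count complementary subsets of the degree-$d$ elements of $\N^m$), so the operations are always well-defined in the situations needed.
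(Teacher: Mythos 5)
Your argument is correct and matches the paper's proof exactly: both deduce the corollary from Theorem~\ref{Mac} by applying the identities \eqref{relSM1} and \eqref{relSM2} and rearranging, with the moreover clause following by the same substitution using the equality case. Your closing remark about the implicit range restrictions is a harmless extra observation, not a divergence in method.
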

\begin{remark}
The formulation of this corollary is quite similar to how Macaulay originally presented his theorem in the 1920s (see \citep{Macaulay} or \citep{Sperner}). 
\end{remark}
\begin{proof}
By (\ref{relSM1}), (\ref{relSM2}) and Theorem \ref{Mac}, we have
\begin{align*}
S_M(d+1)= & \; \binom{m+d}{d+1}-H_M(d+1) \\
\geq & \; \binom{m+d}{d+1}-H_M(d)^{\l d\r} \\
= & \; S_M(d)^{(m)}.
\end{align*}
For the moreover clause one simply replaces the above inequality by equality.
\end{proof}

We now fix some notation that will be used in the proof of Theorem \ref{strictmacaulay} below. 

\begin{definition}\label{connected}
Let $d$ be a nonnegative integer and $M$ a subset of $\N^m$. Given $\tau\in\N^m$ of $\deg\tau> d+1$, and distinct $\xi,\, \zeta\in M\cap \Gamma(d)$ both $<$ $\tau$ (recall that $<$ denotes the product order of $\N^m$), we say that $\xi$ and $\zeta$ are \emph{$\tau_{d,M}$-connected} if there is a sequence of elements $\eta_1,\dots,\eta_s$ of $M\cap\Gamma(d)$ all $<$ $\tau$ with $\eta_1=\xi$, $\eta_s=\zeta$, and such that for all $1 \leq i \leq s-1$,
$$
\deg \LUB(\eta_i,\eta_{i+1})\leq d+1.
$$
\end{definition}

\smallskip
Given an integer $d\geq 0$, consider the following condition on $M\subseteq \N^m$:

\begin{enumerate}
\item[$($\namedlabel{condition4}{$*$}$)$] There exist two distinct elements $\xi, \zeta \in M \cap \Gamma(d)$ 
that are \emph{not} $LUB(\xi, \zeta)_{d,M}$-connected.
\end{enumerate}

In other words (or more explicitly), condition \eqref{condition4} says that there are two distinct elements $\xi,\zeta \in M\cap \Gamma(d)$ such that for every sequence $\eta_1,\dots,\eta_s$ of elements of $M\cap\Gamma(d)$ all $<$ $\LUB(\xi,\zeta)$ with $\eta_1 = \xi$ and $\eta_s = \zeta$, there exists $1 \leq i \leq s-1$ such that $\deg\LUB(\eta_i,\eta_{i+1}) > d+1.$

\begin{example}\label{constar}
Let $m=3$ and $M=\{\xi\in \N^3: \deg\xi=2\}\setminus\{(1,1,0)\}$. We claim that $M$ satisfies condition \eqref{condition4} with $d=2$. The two elements witnessing this are $\eta=(2,0,0)$ and $\zeta=(0,2,0)$. Indeed, the only elements of $M$ that are $<\LUB(\eta,\zeta)=(2,2,0)$ are $\eta$ and $\zeta$. In other words, $(2,0,0)$ and $(0,2,0)$ are \emph{not} $(2,2,0)_{2,M}$-connected. 
\end{example}

\begin{remark}\label{tau}
Suppose $M$ satisfies condition \eqref{condition4} for a fixed $d$. Then, for any pair of distinct elements $\xi,\zeta \in M\cap \Gamma(d)$ given as in condition \eqref{condition4}, we have that $\deg\LUB(\xi,\zeta) > d+1$.  Hence, $M\cap\Gamma(d)$ contains two distinct elements $\xi$ and $\zeta$  that are not $\LUB(\xi,\zeta)_{d,M}$-connected.  Moreover, such a pair $(\xi,\zeta)$ can be chosen with the following additional property: for any two distinct elements $\eta,\pi \in M\cap\Gamma(d)$ both $< \LUB(\xi,\zeta)$, either $\eta$ and $\pi$ are $\LUB(\xi,\zeta)_{d,M}$-connected, or $\LUB(\eta,\pi) = \LUB(\xi,\zeta)$. To see this, suppose there exist distinct $\xi',\zeta' \in M\cap\Gamma(d)$ both $< \LUB(\xi,\zeta)$ that are not $\LUB(\xi,\zeta)_{d,M}$-connected but $\LUB(\xi',\zeta') \neq \LUB(\xi,\zeta)$. Then $\LUB(\xi',\zeta')<\LUB(\xi,\zeta)$. In this case, we replace the pair $(\xi,\zeta)$ with the pair $(\xi',\zeta')$. This process will eventually produce the desired pair (after finitely many steps, 
since at each step the 
degree of $\LUB(\xi,\zeta)$ decreases).
\end{remark}

We will need the following technical result of Sperner on the Macaulay function (see \citep[\S3, p.196]{Sperner}).

\begin{lemma}\label{techSper}
Let $A,B,C$ be nonnegative integers. If $A>0$, $A=B+C$, and $C^{(m-1)}<A^{(m)}-A$, then 
$$
B^{(m)}+C^{(m-1)}\geq A^{(m)}.
$$
\end{lemma}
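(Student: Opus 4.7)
The plan is to prove this inequality by combining the combinatorial interpretation \eqref{vala} with a recursive decomposition of initial segments. Write $N_{a,d}^{(m)}$ for the initial segment of the $a$ largest elements of degree $d$ in $(\N^m,\unlhd)$, so that $a^{(m)}=|(1,\dots,m)\cdot N_{a,d}^{(m)}|$. The first step would be to establish the recursive identity
$$A^{(m)}=A_1^{(m)}+A_2^{(m-1)},$$
where $A_1$ (resp.\ $A_2$) is the number of elements of $N_{A,d}^{(m)}$ with first coordinate $\ge 1$ (resp.\ $=0$), so that $A_1+A_2=A$. After subtracting $(1,0,\dots,0)$ from the upper part and deleting the leading zero from the lower, these two subsets are canonically identified with $N_{A_1,d-1}^{(m)}$ and $N_{A_2,d}^{(m-1)}$, respectively. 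One then partitions $(1,\dots,m)\cdot N_{A,d}^{(m)}$ according to the first coordinate of the resulting degree-$(d+1)$ element and verifies---using that $N_{A,d}^{(m)}$ is an initial segment in degree-lex---that the $x_1$-shifts of the lower part and the $x_j$-shifts $(j\ge 2)$ of the $u_1=1$ slice of the upper part fit together in the $u_1=1$ stratum without surprise, yielding the identity.

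Granted this identity, the second step is to compare the given split $A=B+C$ with the natural split $(A_1,A_2)$ induced by the first-coordinate decomposition. By the monotonicity of $a\mapsto a^{(m)}$---a consequence of \eqref{sim} together with the complementation \eqref{relSM2}---the hypothesis $C^{(m-1)}<A^{(m)}-A=(A_1^{(m)}-A_1)+(A_2^{(m-1)}-A_2)$ forces $C\le A_2$, hence $B\ge A_1$. Setting $k:=B-A_1=A_2-C\ge 0$, one is reduced to showing that the discrete function
$$\Phi(k):=(A_1+k)^{(m)}+(A_2-k)^{(m-1)}$$
satisfies $\Phi(k)\ge \Phi(0)=A^{(m)}$. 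This is a majorization statement: the increments of $a\mapsto a^{(m)}$ in $m$ dimensions dominate those of $a\mapsto a^{(m-1)}$ in $m-1$ dimensions, and can be verified directly by comparing the $d$-binomial representations of the quantities in play.

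The main obstacle will be these two quantitative inputs: the combinatorial identity in the first step (where one must enumerate overlaps among shifts by different $x_j$, which only close up cleanly because of the initial-segment structure), and the majorization of increments in the second. Both are classical manipulations carried out in Sperner's original argument, but together they are what makes the hypothesis $C^{(m-1)}<A^{(m)}-A$ (a condition on $C$ alone) translate into the desired joint inequality $B^{(m)}+C^{(m-1)}\ge A^{(m)}$.
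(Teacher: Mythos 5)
The paper does not actually give a proof of Lemma~\ref{techSper}; it simply cites Sperner (see \citep{Sperner}, \S 3, p.~196) for it, so there is no internal argument to compare your sketch against. Assessing it on its own: the opening identity $A^{(m)}=A_1^{(m)}+A_2^{(m-1)}$ (valid for $d\ge 1$) and the monotonicity of $a\mapsto a^{(m)}$ are both fine, but the two steps that actually drive the proof both contain gaps.

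First, the inference ``$C^{(m-1)}<A^{(m)}-A$ forces $C\le A_2$'' does not follow from monotonicity alone, and it is sensitive to the choice of $d$, which you never specify (note that $A_1$ and $A_2$ depend on $d$, even though $A^{(m)}$ does not). For instance, take $m=3$, $A=2$, $d=2$: then $N_{A,d}=\{(2,0,0),(1,1,0)\}$, so $A_1=2$, $A_2=0$, and $A^{(3)}-A=3$; the hypothesis $C^{(2)}<3$ is satisfied by $C=1$, yet $C=1>A_2=0$. Choosing $d=1$ rescues the claim in this instance, but your argument gives no criterion for the ``right'' $d$, nor any proof that one exists. Second, and more seriously, the assertion that the increments of $a\mapsto a^{(m)}$ dominate those of $a\mapsto a^{(m-1)}$ --- i.e.\ that $\Phi(k)=(A_1+k)^{(m)}+(A_2-k)^{(m-1)}$ is nondecreasing in $k$ --- is simply false. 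With $m=3$, $d=2$, $A=5$ (so $A_1=3$, $A_2=2$) one computes $\Phi(0)=3^{(3)}+2^{(2)}=6+3=9$, $\Phi(1)=4^{(3)}+1^{(2)}=8+2=10$, and $\Phi(2)=5^{(3)}+0^{(2)}=9+0=9<\Phi(1)$. Indeed $\Phi(A_2)=A^{(m)}+0=\Phi(0)$ always, so $\Phi$ necessarily returns to its initial value at the right endpoint and cannot be monotone when it is strictly larger anywhere in between. The target inequality $\Phi(k)\ge\Phi(0)$ may well be true (it holds in the examples I tested), but it cannot be established by the increment-domination argument you propose; that step is precisely where the real content of Sperner's lemma lies, and as written your proposal does not supply it.
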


We are finally ready to prove the main theorem of this section, which can be regarded as the key result of the paper.

\begin{theorem}\label{strictmacaulay}
Let $d>0$ be an integer and $M$ a subset of $\N^m$.  If $M$ satisfies condition \eqref{condition4} above, then we have the following strict inequality
$$
H_M(d+1) < H_M(d)^{\l d\r}.
$$
\end{theorem}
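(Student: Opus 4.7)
By equations (\ref{relSM1}) and (\ref{relSM2}), the inequality $H_M(d+1) < H_M(d)^{\langle d\rangle}$ is equivalent to the strict shadow inequality
$$
S_M(d+1) > S_M(d)^{(m)},
$$
which is what I will prove. I argue by induction on $m$. When $m=1$, the statement is vacuous: any two $\xi,\zeta\in\Gamma(d)\subseteq\N$ satisfy $\LUB(\xi,\zeta)=\max(\xi,\zeta)\leq d$, contradicting the consequence $\deg\LUB(\xi,\zeta)>d+1$ from Remark~\ref{tau}, so condition~\eqref{condition4} cannot hold.

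For the inductive step, fix $m\geq 2$ and set $\Sigma:=\{\xi\in\N^m:\deg\xi=d,\ \xi\geq\eta\text{ for some }\eta\in M\}$, so $|\Sigma|=S_M(d)$. Corollary~\ref{varMac} already gives
$$
S_M(d+1)\geq |(1,\dots,m)\cdot\Sigma|\geq |\Sigma|^{(m)},
$$
and my task is to produce a strict step somewhere in this chain. I partition $\Sigma$ by the last coordinate into $\Sigma_+:=\{\xi\in\Sigma:\xi_m\geq 1\}$ and $\Sigma_0:=\{\xi\in\Sigma:\xi_m=0\}$. The portions of the shadow $(1,\dots,m)\cdot\Sigma$ lying in $\{\xi_m=0\}$ and in $\{\xi_m\geq 1\}$ can then be bounded from below by the $(m-1)$-dimensional shadow count of $\Sigma_0$ (viewed in $\N^{m-1}$) and an $m$-dimensional shadow count involving $\Sigma_+$ (via a shift to degree $d-1$), respectively. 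Applying Sperner's Lemma~\ref{techSper} with $A=|\Sigma|$, $B=|\Sigma_+|$, $C=|\Sigma_0|$ then relates the sum $|\Sigma_+|^{(m)}+|\Sigma_0|^{(m-1)}$ to the benchmark $|\Sigma|^{(m)}$; when the numerical hypothesis of Sperner's lemma fails, the slice $\Sigma_0$ is itself large enough to already force an excess at degree $d+1$ by direct counting.

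I next deploy condition~\eqref{condition4} to enforce strictness. By Remark~\ref{tau}, select a minimal witnessing pair $\xi,\zeta\in M\cap\Gamma(d)$. If both $\xi$ and $\zeta$ project into the same slice (after possibly iterating the coordinate split), their projections witness condition~\eqref{condition4} in $\N^{m-1}$ at a possibly smaller degree parameter, and the inductive hypothesis yields strict shadow growth inside that slice, which lifts to strictness for $\Sigma$. If instead $\xi$ and $\zeta$ lie in different slices, the failure of $\LUB(\xi,\zeta)_{d,M}$-connectedness combined with the minimality in Remark~\ref{tau} exhibits a concrete degree-$(d+1)$ element that must belong to $S_M(d+1)$ yet is not reached by the lex-compressed shadow $(1,\dots,m)\cdot N_{|\Sigma|,d}$, supplying the required surplus.

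The hardest step, I expect, will be the cross-slice case: one must pin down precisely \emph{which} degree-$(d+1)$ element is the surplus and verify it is not already counted by the extremal compressed configuration. This will reduce to exploiting the minimality of $(\xi,\zeta)$ from Remark~\ref{tau} --- which prevents any intermediate element of $M\cap\Gamma(d)$ from restoring connectedness along $\LUB(\xi,\zeta)$ --- and converting this combinatorial obstruction into a specific vertex of $\N^m$ at degree $d+1$ lying outside the compressed shadow. Verifying the disjointness between this surplus vertex and the Sperner-bound count is where the book-keeping will be most delicate.
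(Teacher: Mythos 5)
Your proposal shares the paper's high-level skeleton --- pass from $H_M$ to the complementary shadow count $S_M$, decompose into a ``slice'' and its complement, invoke Sperner's Lemma~\ref{techSper}, and extract strictness from condition~\eqref{condition4} --- but the details diverge in ways that leave genuine gaps.

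First, the induction variable. You induct on $m$; the paper inducts on $|M|$ (after reducing to the case where every element of $M$ has degree exactly $d$), with base case $|M|=2$. Your slice $\Sigma_0=\{\xi_m=0\}$ lands in $\N^{m-1}$ so that an $m$-induction can plausibly bite there, but the complementary slice $\Sigma_+=\{\xi_m\geq 1\}$, after shifting down by ${\bf m}$, still lives in $\N^m$ at degree $d-1$. An induction purely on $m$ does not apply to it; you would need a secondary induction (on $d$, or on $|M|$ as in the paper), and the ``after possibly iterating the coordinate split'' remark does not cash this out. The paper's choice to split $M=M_0\sqcup M_1$ according to whether the \emph{first} coordinate equals the minimum occurring value makes $|M_0|<|M|$ automatic, so the induction on $|M|$ closes cleanly.

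Second, and more seriously, the cross-slice strictness mechanism is wrong in principle. You propose to ``exhibit a concrete degree-$(d+1)$ element that must belong to $S_M(d+1)$ yet is not reached by the lex-compressed shadow $(1,\dots,m)\cdot N_{|\Sigma|,d}$.'' But the Macaulay bound $S_M(d+1)\geq S_M(d)^{(m)}$ is a pure cardinality inequality between two sets that in general have \emph{no containment relation} when $\Sigma$ is not compressed. Finding an element of $(1,\dots,m)\cdot\Sigma$ outside $(1,\dots,m)\cdot N_{|\Sigma|,d}$ proves nothing about their sizes; there could equally be elements of the latter outside the former. The paper never compares with a compressed model at the element level. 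Instead it proves strictness of two explicit containments \emph{inside} $(1,\dots,m)\cdot M$ (the displays (\ref{incl1}) and (\ref{incl2})): it uses condition~\eqref{condition4} (via the minimality in Remark~\ref{tau}) to construct a specific element of $(1,\dots,m)\cdot M$ missing from a specific disjoint sub-union whose size is then bounded below by Corollary~\ref{varMac}, yielding $F>A+C^{(m-1)}$ and $F>B^{(m)}+C^{(m-1)}$; the Sperner lemma and a case split on whether $C^{(m-1)}\geq A^{(m)}-A$ then finish. If you want strictness, you must work entirely inside the shadow of $\Sigma$ itself, exhibiting an element of the shadow outside a concrete sub-union that you have already bounded below by Macaulay's theorem --- not compare against an external extremal configuration. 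As written, the cross-slice case (and the ``direct counting'' excess you invoke when Sperner's hypothesis fails) remain unjustified, and the route to justifying them along the lines you sketch does not exist.
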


\begin{proof}
We first make some simplifications. By definition of the Hilbert-Samuel function, we have
$$H_M(d)=H_N(d) \,\text{ and }\, H_M(d+1)\leq H_N(d+1),$$
where $N=\{\xi\in\N^m: \deg \xi=d \text{ and } \xi\geq \eta \text{ for some }\eta\in M\}$, and so it would suffice to prove the theorem for $N$. Hence, we may (and do) assume that all the elements of $M$ have degree $d$. 

Note that the desired inequality is equivalent to
\begin{equation}\label{alter}
S_M(d+1)>S_M(d)^{(m)}.
\end{equation}
Indeed, if (\ref{alter}) holds, by (\ref{relSM1}) and (\ref{relSM2}), we would have
$$
H_M(d+1)=  \; \binom{m+d}{d+1}-S_M(d+1) <  \; \binom{m+d}{d+1}-S_M(d)^{(m)} =  \; H_M(d)^{\l d\r}.
$$
Thus, it suffices to prove (\ref{alter}). Note that, by our assumption that all the elements of $M$ have degree $d$, we have $|M| = S_M(d)$ and $|(1,\dots,m)\cdot M| = S_M(d+1)$. 

Now let $(\xi,\zeta)$ be a pair of distinct elements of $M$ as in Remark \ref{tau} and set 
$$
\tau:=\LUB(\xi,\zeta)=(v_1,\dots,v_m);
$$
in other words, $\xi = (a_1,\dots,a_m)$ and $\zeta = (b_1,\dots,b_m)$ are elements of $M$ that are not $\tau_{d,M}$-connected, and for any two distinct elements $\eta,\pi \in M$ both $< \tau$, either $\eta$ and $\pi$ are $\tau_{d,M}$-connected or $\LUB(\eta,\pi) = \tau$. We assume that $a_1<b_1$; if not simply permute the variables.

Let $A := S_M(d)=|M|$ and $F:= S_M(d+1)=|(1,\dots,m)\cdot M|$.  Thus, we must show that 
$$
F>A^{(m)}.
$$ 
We prove the result by induction on $A$.  Since $M$ has at least two elements, the base case is $A = 2$ and so $M = \{\xi,\zeta\}$.  In this case, $A^{(m)} = 2m-1$, and saying that $\xi$ and $\zeta$ are not $\tau_{d,M}$-connected is equivalent to saying that $\deg\tau > d+1$.  But then we cannot have $\xi + {\bf i} = \zeta + {\bf j}$ for any $1\leq i,j\leq m$, so $F = 2m > A^{(m)}$.

Now we prove the induction step, and so assume $A\geq 3$.  Let $(u_1,\dots,u_m)$ be the least element of $M$ with respect to the (left) degree-lexicographical order $\unlhd$. We can then write
$$
M = M_0 \cup M_1,
$$
where $M_0$ consists of all elements $(t_1,\dots,t_m) \in M$ with $t_1 > u_1$, and $M_1$ consists of all elements $(t_1,\dots,t_m) \in M$ with $t_1 = u_1$.  Note that $M_0 \cap M_1 = \emptyset$.  We then have the following inclusions:
\begin{equation}\label{incl1}
(1)\cdot M \,\cup\, (2,\dots,m)\cdot M_1 \,\subseteq\, (1,\dots,m)\cdot M,
\end{equation}
\begin{equation}\label{incl2}
(1,\dots,m)\cdot M_0\, \cup\, (2,\dots,m)\cdot M_1\, \subseteq \, (1,\dots,m)\cdot M.
\end{equation}
In addition we have that 
$$
(1)\cdot M\, \cap\, (2,\dots,m)\cdot M_1 = \emptyset \,\text{ and } \, (1,\dots,m)\cdot M_0\, \cap\, (2,\dots,m)\cdot M_1 = \emptyset.
$$

We now prove that, under our assumptions, the inclusion \eqref{incl1} is strict.  First note that all tuples $\pi = (c_1,\dots,c_m) \in M$ such that $\pi<\tau$ and $c_1 = a_1<b_1$ are $\tau_{d,M}$-connected to $\xi$, otherwise this would contradict the choice of $\tau$ as $\LUB(\xi,\pi) \neq \tau$.  Let $a$ be the smallest integer with $a>a_1$ and such that there is $\pi=(c_1,\dots,c_m) \in M$ with $\pi<\tau$, not $\tau_{d,M}$-connected to $\xi$, and $c_1=a$.  Note that $a \leq b_1$. Also, note that there is $1<i\leq m$ such that $c_i<v_i$ (if not, we would have $\pi>\xi$). Set
$$
\rho=(c_1-1,c_2,\dots,c_i+1,\dots,c_m).
$$
Then, $\rho$ is not in $M$. Indeed, it it were, then $\pi$ and $\rho$ would be $\tau_{d,M}$-connected and $\rho$ and $\xi$ would also be $\tau_{d,M}$-connected (by construction of $\tau$), so $\pi$ would be $\tau_{d,M}$-connected to $\xi$.  This shows that $\pi+{\bf i}\in (1,\dots,m)\cdot M$ but $\pi+{\bf i}\notin  (1)\cdot M\,\cup\, (2,\dots,m)\cdot M_1$, as desired.

Now we prove that if $M_0$ does not satisfy condition \eqref{condition4}, then containment \eqref{incl2} is strict. In this case, we must have $\xi\in M_1$. Also, note that for every $1< i\leq m$ such that $a_i>0$, if we set
$$
\nu=(a_1+1,a_2,\dots,a_i-1,\dots,a_m),
$$ 
then $\nu<\tau$ but it cannot be in $M_0$. Indeed, if it were, then $\nu$ and $\zeta$ would witness that $M_0$ satisfies condition \eqref{condition4} since $\xi$ and $\nu$ are $\tau_{d,M}$-connected. This shows that $\xi+{\bf 1}\in (1,\dots, m)\cdot M$ but $\xi+{\bf 1}\notin (1,\dots,m)\cdot M_0\, \cup\, (2,\dots,m)\cdot M_1$, as desired.

Let $B = |M_0|$ and let $C = |M_1|$.  Since we have shown that inclusion \eqref{incl1} is strict, an application of Corollary \ref{varMac} yields
$$
F> A+C^{(m-1)}.
$$
On the other hand, if inclusion \eqref{incl2} is strict, another application of Corollary \ref{varMac} yields
$$
F>B^{(m)}+C^{(m-1)}.
$$
Finally, if \eqref{incl2} is an equality, then we have shown that $M_0$ must satisfy \eqref{condition4}. Since $B < A$ (as $M_0 \subsetneq M$), by induction we have that in this case $|(1,\dots,m)\cdot B|>B^{(m)}$, and so
$$
F>B^{(m)}+C^{(m-1)}.
$$
Therefore, we always have that
\begin{equation}\label{equa}
F>A+C^{(m-1)}\quad \text{ and }\quad F>B^{(m)}+C^{(m-1)}.
\end{equation}
If $C^{(m-1)}\geq A^{(m)}-A$, then it follows from the first inequality of (\ref{equa}) that $F>A^{(m)}$. For the remaining case $C^{(m-1)}<A^{(m)}-A$, since $A\geq 3$ and $A=B+C$, Lemma \ref{techSper} yields $B^{(m)}+C^{(m-1)}\geq A^{(m)}$. It now follows from the second inequality of (\ref{equa}) that $F>A^{(m)}$. This concludes the proof.
\end{proof}

\begin{remark}\label{correctex} \
\begin{enumerate}
\item Theorem \ref{strictmacaulay} seems to be of independent interest. It states that a necessary condition for the Hilbert-Samuel function of $M$ to have maximal growth at $d+1$ is that every pair $\xi,\zeta$ of distinct elements of $M\cap\Gamma(d)$ is $\LUB(\xi,\zeta)_{d,M}$-connected.
\item For $m=2$, the converse of Theorem \ref{strictmacaulay} holds. Indeed, if $M$ is a subset of $\N^2$ all of whose elements have degree $d$, then $H_M(d+1)=H_M(d)^{\l d\r}$ if and only if $M$ is a block (i.e., $M$ is of the form $\{(u_1,u_2),(u_1+1,u_2-1),\dots,(u_1+c,u_2-c)\}$ for some $u_1,u_2,c\in\N$), and if $M$ is a block then $M$ does not satisfy condition \eqref{condition4}. On the other hand, when $m\geq 3$, the converse of Theorem \ref{strictmacaulay} does not generally hold.  For a (counter-)example, consider the case $m=3$ and $M=\{\xi\in \N^3: \deg\xi=3\}\setminus\{(1,1,1)\}$. Then $M$ does not satisfy condition \eqref{condition4} with $d=3$. Let us verify that the points $(2,0,1)$ and $(0,2,1)$ are indeed $(2,2,1)_{3,M}$-connected (it is trivial to check connectedness for other pairs of points in $M$). Let $\eta_1=(2,0,1)$, $\eta_2=(2,1,0)$, $\eta_3=(1,2,0)$ and $\eta_4=(0,2,1)$. Then $\deg\LUB(\eta_i,\eta_{i+1})\leq 4$ for $i=1,2,3$, and so this sequence witnesses the desired connectivity (the reader might want to compare this with Example~\ref{constar}). However, 
$$
H_M(4)=0<1=H_M(3)^{\l 3\r}.
$$
\end{enumerate}
\end{remark}

To finish this section, we want to connect the previous discussion to our work with antichains from previous sections.  Given an antichain sequence $\bar\xi = (\xi_1,\dots,\xi_k)$ of $\N^m$, for each $i \geq 0$ the Hilbert-Samuel function $H_{\bar\xi}^i:\N \to \N$ is defined as
$$
H_{\bar\xi}^i(d) = |\{\eta \in \N^m : \deg \eta = d \text{ and } \eta \not \geq \xi_j \text{ for all } j \leq i \text{ for which } \xi_j \text{ is defined}\}|.
$$
If for each $i\geq 0$ we let 
$$M_{i}=\{\eta\in \N^m:  \eta\geq \xi_j \text{ for some } \xi_j \text{ with } j \leq i\},$$
we see that then $H_{M_{i}}(d) = H_{\bar\xi}^i(d)$.  Hence, a direct consequence of Theorem \ref{strictmacaulay} is the following:

\begin{corollary}\label{HSdeg}
Let $d>1$ be an integer and $\bar\xi=(\xi_1,\dots,\xi_k)$ an antichain sequence of $\N^m$. If $H_{\bar\xi}^k(d)=H_{\bar\xi}^k(d-1)^{\langle d-1\rangle}$, then for each pair $\xi_i\neq \xi_j$, both having degree at most $d-1$, there exists a sequence $\eta_1,\dots,\eta_s$ of distinct elements of $\Gamma_{\bar\xi}(d-1)=\bar\xi\cap\Gamma(d-1)$ all $<\LUB(\xi_i,\xi_j)$ such that $\eta_1=\xi_i$, $\eta_s=\xi_j$, and 
$$
\deg \LUB(\eta_\ell,\eta_{\ell+1})\leq d, \quad \text{ for all } \ell=1,\dots,s-1.
$$
\end{corollary}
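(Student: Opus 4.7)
The plan is to derive this as the contrapositive of Theorem~\ref{strictmacaulay}, applied to the upward closure $M_k := \{\eta \in \N^m : \eta \geq \xi_j \text{ for some } j \leq k\}$ at the integer $d-1$, followed by a translation of the resulting connecting sequence from $M_k$ back into the antichain $\bar\xi$. The discussion immediately preceding the corollary already identifies $H_{\bar\xi}^k(\cdot) = H_{M_k}(\cdot)$, so the hypothesis rewrites as $H_{M_k}(d) = H_{M_k}(d-1)^{\langle d-1\rangle}$. Since Theorem~\ref{strictmacaulay} forces a strict inequality here whenever condition \eqref{condition4} is satisfied, the contrapositive guarantees that $M_k$ fails \eqref{condition4} at $d-1$; equivalently, any two distinct elements $\alpha,\beta \in M_k \cap \Gamma(d-1)$ are $\LUB(\alpha,\beta)_{d-1,M_k}$-connected.

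Specializing $\alpha = \xi_i$ and $\beta = \xi_j$, both of which lie in $\bar\xi \cap \Gamma(d-1) \subseteq M_k \cap \Gamma(d-1)$, I would obtain a sequence $\nu_1,\dots,\nu_s$ of elements of $M_k\cap\Gamma(d-1)$ with $\nu_1 = \xi_i$, $\nu_s = \xi_j$, all $<\LUB(\xi_i,\xi_j)$, and $\deg \LUB(\nu_\ell,\nu_{\ell+1}) \leq d$. The remaining job is to promote the $\nu_\ell$ to actual elements of the antichain $\bar\xi$. For each interior index I pick some $\eta_\ell \in \bar\xi$ with $\eta_\ell \leq \nu_\ell$, which exists by definition of $M_k$; automatically $\deg \eta_\ell \leq \deg \nu_\ell \leq d-1$, so $\eta_\ell \in \Gamma_{\bar\xi}(d-1)$, and the inequalities $\eta_\ell \leq \nu_\ell < \LUB(\xi_i,\xi_j)$ and $\LUB(\eta_\ell,\eta_{\ell+1}) \leq \LUB(\nu_\ell,\nu_{\ell+1})$ transfer both the ``$<\LUB(\xi_i,\xi_j)$'' property and the degree condition to the $\eta$-sequence. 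Setting $\eta_1 := \xi_i$ and $\eta_s := \xi_j$ pins the endpoints.

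The step requiring the most care (though no deep ideas) is distinctness: if $\eta_\ell = \eta_{\ell'}$ for some $\ell < \ell'$, I would simply delete $\eta_{\ell+1},\dots,\eta_{\ell'}$; the resulting sequence inherits the endpoint, membership, and degree conditions because $\LUB(\eta_\ell,\eta_{\ell'+1}) = \LUB(\eta_{\ell'},\eta_{\ell'+1})$, and iterating finitely many times yields a sequence of distinct elements. Apart from this bookkeeping the real substance of the argument is the single invocation of Theorem~\ref{strictmacaulay}, which is precisely the reason that theorem was established in its present strengthened form.
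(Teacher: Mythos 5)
Your argument is correct and is exactly the route the paper has in mind: the paragraph preceding the corollary identifies $H^k_{\bar\xi}$ with $H_{M_k}$, and the corollary is then the contrapositive of Theorem~\ref{strictmacaulay} applied with $d-1$ in place of $d$, followed by the routine replacement of each element of the resulting connecting sequence by an antichain element below it (which only shrinks least upper bounds and degrees) and a pruning pass between repeated entries to ensure distinctness. The paper presents this as a ``direct consequence'' and suppresses the translation and pruning bookkeeping; your proposal supplies precisely those omitted details and nothing more.
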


\section{An algorithm to compute $C_{r,m}^n$}\label{algorithm}

In this section we prove that there is a recursive algorithm that computes $C_{r,m}^n$. We first deal with the case $n=1$ (Theorem \ref{UpperBound}), and then we prove that for $n>1$ the value is obtained by compositions in the ``$r$'' input (Theorem \ref{UpperBound2}).

Recall from Section \ref{diffkerpre} that for any increasing function $f:\N_{>0}\to\N$ we say that $f$ bounds the degree growth of a sequence $\alpha_1,\dots,\alpha_k$ of $\Nn$ if $\deg \alpha_i \leq f(i)$ for all $i=1,\dots,k$.  Also, $\L_{f,m}^n$ denotes the maximal length of an antichain sequence of $\Nn$ of degree growth bounded by $f$. In \citep{LeOv} an algorithm that computes the value of $\L_{f,m}^n$ was established and an antichain sequence of maximal length was built. We discuss this in more detail below.

\subsection{The case $n=1$} \label{subsec1}

Throughout this subsection we let $g:\N_{>0}\to\N$ be the increasing function defined as $g(1)=r$ and $g(i)=i+r-2$ for $i\geq 2$. We will prove that 
$$
C_{r,m}^1= \L_{g,m}^1+r-1.
$$
In Proposition \ref{basic} we  already proved that this equality holds in the case $r=0$ or $m=1$. We now assume $r\geq 1$ and $m\geq 2$. Note that in this case we have $\L_{g,m}^1\geq 2$, and so the above equality is equivalent to
\begin{equation}\label{forn1}
C_{r,m}^1= g(\L_{g,m}^1)+1.
\end{equation}

Let $\bar\mu = (\mu_1,\dots,\mu_L)$ be the antichain sequence defined recursively as follows:
$$
\mu_1 = \max_{\unlhd}\{\xi \in \N^m : \deg \xi = g(1)\},
$$
and, as long as it is possible, 
$$
\mu_i = \max_{\unlhd}\{\xi \in \N^m : \deg \xi = g(i) \text{ and } \xi \not \geq \mu_1,\dots,\mu_{i-1}\}.
$$

In \citep[\S3.2]{LeOv}, it is shown that $\bar\mu$ is a compressed antichain sequence of $\N^m$ having length $L = \L_{g,m}^1$ (i.e., $\bar \mu$ is of maximal length among antichain sequences of $\N^m$ with degree growth bounded by $g$). It is also observed that $H_{\bar\mu}^L(\deg \mu_L)=H_{\bar\mu}^L(g(L))=0$, where recall that $H_{\bar \mu}^i$ denotes the Hilbert-Samuel function of $\bar \mu$, that is, for $i,d\geq 0$,
$$
H_{\bar\mu}^i(d) = |\{\xi \in \N^m : \deg \xi = d \text{ and } \xi \not \geq \mu_j \text{ for all } j \leq i \text{ for which } \mu_j \text{ is defined}\}|.
$$

The antichain sequence $\bar \mu$ can be more explicitly constructed as follows:
\begin{enumerate}
\item[(i)] if $\mu_i = (u_1,\dots,u_s,0,\dots,0,u_m)$ with $s < m-1$ and $u_s > 0$, then
$$
\mu_{i+1} = (u_1,\dots,u_s-1,g(i+1) - g(i) + u_m+1,0,\dots,0)
$$
\item[(ii)] if $\mu_i = (u_1,\dots,u_{m-1},u_m)$ with $u_{m-1} > 0$, then
$$
\mu_{i+1} = (u_1,\dots,u_{m-1} - 1, g(i+1) - g(i) + u_m+1).
$$
\end{enumerate}

From this recursive construction of $\bar\mu$, one obtains (see \citep[Corollary 3.10]{LeOv})
\begin{equation}\label{algobo}
\L_{g,m}^1=\Psi_{g,m}(1,(g(1),0\dots,0)),
\end{equation}
where $\Psi_{g,m}:\N_{>0}\times\N^{m}\to \N$ is given by
$$
\Psi_{g,m}(i,(0,\dots,0,u_n))=i
$$
with
\begin{align*}
\Psi_{g,m}&(i-1,(u_1,\dots,u_s,0,\dots,0,u_m))\\&=\Psi_{g,m}(i,(u_1,\dots,u_s-1,g(i)-g(i-1)+u_m+1,0,\dots,0)),\quad s<m-1, \ u_s>0
\end{align*}
 and 
\begin{align*}
\Psi_{g,m}&(i-1,(u_1,\dots,u_{m-1},u_m))\\&=\Psi_{g,m}(i,(u_1,\dots,u_{m-1}-1,g(i)-g(i-1)+u_m+1)),\quad u_{m-1}>0. 
\end{align*}

For example, when $m=2$, the sequence $\bar\mu$ is given by
$$
\mu_1=(r,0), \; \mu_2=(r-1,1),\; \mu_3=(r-2,3),\; \mu_4=(r-3,5),\dots,\; \mu_{r+1}=(0,2r-1),
$$
and so $L=\L_{g,2}^1=r+1$.

By the above discussion, it suffices to establish (\ref{forn1}) to prove that there is a recursive algorithm that computes the value of $C_{r,m}^1$. We first prove that $C_{r,m}^1\geq g(\L_{g,m}^1)+1$. 

\begin{proposition}\label{botu}
With $\bar\mu$ as above, we have $D_{r,\bar\mu}=g(\L_{g,m}^1)+1$ (see Section \ref{expri} for the definition of $D_{r,\bar\mu}$). In particular, $C_{r,m}^1\geq g(\L_{g,m}^1)+1$.
\end{proposition}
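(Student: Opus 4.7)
My plan is to prove $D_{r,\bar\mu}=g(L)+1$ (writing $L:=\L_{g,m}^1$) by matching upper and lower bounds; the conclusion $C_{r,m}^1\geq g(L)+1$ then follows from the definition of $C_{r,m}^1$ as the supremum of $D_{r,\bar\alpha}$ over antichain sequences $\bar\alpha$.

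For the upper bound $D_{r,\bar\mu}\leq g(L)+1$, set $p:=g(L)+1$. Since every $\mu_i$ has degree at most $g(L)<p$, we have $\Gamma_{\bar\mu}(p)=\bar\mu$. Given $\tau\in\gamma(\bar\mu)\setminus\Gamma(p)$ and $i<j$ with $\tau-{\bf i}\geq\beta_1$ and $\tau-{\bf j}\geq\beta_2$ for some $\beta_1,\beta_2\in\bar\mu$, the case $\beta_1=\beta_2$ is handled by the constant two-term sequence $\eta_1=\eta_2=\beta_1$ with $(k_1,k_2)=(i,j)$. Otherwise, I apply Corollary \ref{HSdeg} with $d=p$ to the pair $\beta_1,\beta_2$; its hypothesis $H_{\bar\mu}^L(p)=H_{\bar\mu}^L(p-1)^{\langle p-1\rangle}$ holds because $H_{\bar\mu}^L(g(L))=0$ forces both sides to vanish. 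The resulting sequence $\eta_1=\beta_1,\dots,\eta_s=\beta_2$ consists of distinct elements of $\bar\mu$ all strictly below $\LUB(\beta_1,\beta_2)\leq\tau$ in the product order, and satisfies $\deg\LUB(\eta_\ell,\eta_{\ell+1})\leq p$. Since each $\eta_\ell<\tau$ in the product order, a coordinate $k_\ell$ with $\eta_\ell\leq\tau-{\bf k_\ell}$ exists; at the endpoints I set $k_1=i$ and $k_s=j$, consistent with the hypotheses on $\beta_1,\beta_2$.

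For the lower bound $D_{r,\bar\mu}\geq g(L)+1$, I exhibit a violator of \eqref{condition3} at every $p\in[r,g(L)]$. Let $k:=k_p$ be the largest index with $g(k)\leq p$; since $g(i+1)=g(i)+1$ for $i\geq 2$, we have $g(k)=p$ and $k\geq 2$. Set $\tau_p:=\LUB(\mu_{k-1},\mu_k)$ and, depending on whether the recursion from $\mu_{k-1}$ to $\mu_k$ used case (i) with decrementing coordinate $s$ or case (ii), take $(i,j):=(s,s+1)$ or $(i,j):=(m-1,m)$ respectively. Direct computation via the recursion formulas yields $\deg\tau_p>p$, $\tau_p-{\bf i}\geq\mu_k$, $\tau_p-{\bf j}\geq\mu_{k-1}$, and $\mu_k\not\leq\tau_p-{\bf j}$. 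The crux is the uniqueness claim that $\mu_k$ is the \emph{only} element of $\Gamma_{\bar\mu}(p)$ dominated by $\tau_p-{\bf i}$: in case (ii) this is immediate because $\tau_p-{\bf (m-1)}=\mu_k$ exactly, so the antichain property of $\bar\mu$ forces uniqueness; in case (i), $\tau_p-{\bf s}$ agrees with $\mu_k$ on all coordinates except the last, and combining this with the fact that the sequence $\mu_1,\mu_2,\ldots$ is strictly decreasing in the lexicographic order on $\N^m$ (immediate from the recursion, since each of cases (i) and (ii) strictly decreases the first differing coordinate from $\mu_i$ to $\mu_{i+1}$), any $\mu_{j'}\leq\tau_p-{\bf s}$ with $j'<k$ must coincide with $\mu_k$ on the first $m-1$ coordinates and then lie coordinate-wise above $\mu_k$, contradicting the antichain property. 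Granted uniqueness, any valid sequence satisfying \eqref{condition3} must have $\eta_1=\mu_k$ and $\eta_s\neq\mu_k$; but incomparability of $\mu_k$ with every other element of $\Gamma_{\bar\mu}(p)$ yields $\deg\LUB(\mu_k,\mu_{j'})>\deg\mu_k=p$ for every $\mu_{j'}\neq\mu_k$ in $\Gamma_{\bar\mu}(p)$, ruling out any admissible transition out of $\mu_k$ and completing the proof of failure.

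The main obstacle is the uniqueness claim in case (i) of the lower bound, which rests on the strict lex-decreasing property of $\bar\mu$ combined with the coordinate-wise structure of $\tau_p-{\bf s}$; once uniqueness is established, the remaining steps follow directly from the antichain property and the explicit recursion formulas.
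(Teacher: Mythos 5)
Your proof is correct, but it takes a genuinely different route from the paper's. The paper proves $D_{r,\bar\xi_i}=\deg\mu_i+1$ for the truncations $\bar\xi_i=(\mu_1,\dots,\mu_i)$ by induction on $i$, simultaneously carrying a constructive connecting-sequence claim (condition (\ref{seqtu})) and using the recursion formulas for $\bar\mu$ to build, by hand, a witness $\mu_t$ with $\mu_t<\LUB(\mu_q,\mu_i)$ and $\deg\LUB(\mu_t,\mu_i)=\deg\mu_i+1$. You instead separate the two bounds: for the upper bound $D_{r,\bar\mu}\le g(\L_{g,m}^1)+1$ you invoke Corollary~\ref{HSdeg} (hence the full strength of Theorem~\ref{strictmacaulay}), exploiting $H_{\bar\mu}^L(g(\L_{g,m}^1))=0$ to satisfy its hypothesis at $d=g(\L_{g,m}^1)+1$; for the lower bound you produce, at each $p\in[r,g(\L_{g,m}^1)]$, the explicit violator $\tau_p=\LUB(\mu_{k-1},\mu_k)$ with $g(k)=p$ and verify uniqueness of $\mu_k$ below $\tau_p-{\bf i}$ via the strictly lex-decreasing structure of $\bar\mu$. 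Your approach is cleaner in structure (no auxiliary induction invariant) but uses heavier machinery than the paper does here: the paper's proof of Proposition~\ref{botu} is entirely elementary and does not rely on the strengthened Macaulay theorem, which it reserves for the converse direction (Theorem~\ref{intheb}). There is no circularity in your use of Corollary~\ref{HSdeg}, since it is proved independently of Proposition~\ref{botu}. Both the endpoint handling in the upper bound (taking $k_1=i$, $k_s=j$ from the given $\beta_1\le\tau-{\bf i}$, $\beta_2\le\tau-{\bf j}$) and the case analysis (i)/(ii) in the lower bound check out, including the degenerate step $g(2)-g(1)=0$.
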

\begin{proof}
For each $i=1,\dots,L$, we let $\bar \xi_i$ be the antichain sequence $(\mu_1,\dots,\mu_i)$. Recall that $L=\L_{g,m}^1$. It suffices to prove 
\begin{equation}\label{degmuse}
D_{r,\bar\xi_i}=\deg\mu_i+1 \quad \text{ for } i=2,\dots,L.
\end{equation}
Indeed, if (\ref{degmuse}) holds, then taking $i=L$ yields $D_{r,\bar\mu}=\deg\mu_L+1=g(\L_{g,m}^1)+1$. At this point we encourage the reader to look back at the definition of $D_{r,\bar\xi_i}$ in Section~\ref{expri}. 

We now prove (\ref{degmuse}) by induction on $i$. We actually prove a little bit more: in addition to (\ref{degmuse}), we prove that for each pair of distinct elements $\mu_q,\mu_t\in\bar\xi_i$ 
\begin{equation}\label{seqtu}
\text{there are } \eta_1,\dots,\eta_s \in \bar\xi_i \,\text{ all } <\LUB(\mu_q,\mu_t) \text{ such that } \eta_1=\mu_q,\, \eta_s=\mu_t
\end{equation}
$$
\text{ and } \deg\LUB(\eta_\ell,\eta_{\ell+1})\leq \deg\mu_i+1  \,\text{ for } \ell=1,\dots,s-1.
$$

\medskip
For the base case $i=2$, note that
$$
\bar\xi_2=(\mu_1,\mu_2)=((r,0,\dots,0),(r-1,1,0,\dots,0)),
$$
so $\gamma(\bar\xi_2)=\{\LUB(\mu_1,\mu_2)\}=\{(r,1,0\dots,0)\}$. Since $\deg(r,1,0,\dots,0)=r+1$, we get that $D_{r,\bar\xi_2}=r+1=\deg\mu_2+1$.  To show condition (\ref{seqtu}) we simply take $\eta_1=\mu_1$ and $\eta_2=\mu_2$. 

For the induction step we fix $3\leq i \leq L$, and assume $D_{r,\bar\xi_{i-1}}=\deg\mu_{i-1}+1$ and that condition (\ref{seqtu}) holds for $i-1$. Since $\bar\xi_i$ is the concatenation of $\bar\xi_{i-1}$ and $\mu_i$ (with $\deg\mu_i=\deg\mu_{i-1}+1=D_{r,\bar\xi_{i-1}}$), we have that $D_{r,\bar\xi_i}\geq D_{r,\bar\xi_{i-1}}=\deg\mu_i$. It remains to show that $D_{r,\xi_i}\neq \deg\mu_i$, that the integer $\deg\mu_i+1$ satisfies condition \eqref{condition3} of Section~\ref{expri}, and that condition (\ref{seqtu}) holds. To do this we will prove that for any $q<i$ there exists $t<i$ such that
\begin{equation}\label{exis}
\mu_t<\LUB(\mu_q,\mu_i) \text{ and } \deg\LUB(\mu_t,\mu_i)=\deg\mu_i+1,
\end{equation}
and this will complete the proof. Indeed, suppose (\ref{exis}) holds, and set $\zeta=\LUB(\mu_t,\mu_i)\in\gamma(\bar\xi_i)$, where this $t$ is the one associated to $q=1$. Then, there exists $1\leq k\leq m$ such that $\mu_i=\zeta-{\bf k}$, and so there cannot be $p<i$ such that $\mu_p\leq \zeta-{\bf k}$. Thus, this $\zeta$ witnesses the fact that $D_{r,\xi_i}\neq \deg\mu_i$. On the other hand, observe that if condition (\ref{seqtu}) holds then the integer $\deg\mu_i+1$ satisfies condition \eqref{condition3}. Thus, it would be enough to prove condition (\ref{seqtu}). To do this, let $\mu_p\rhd \mu_q$ be a pair of elements of $\bar\xi_i$. If $\mu_p,\mu_q\in\bar\xi_{i-1}$, then, by induction, there is a sequence with the desired properties. So now suppose $p=i$. By (\ref{exis}), there is $\mu_t\in\bar\xi_{i-1}$ such that $\mu_t<\LUB(\mu_p,\mu_q)$ and $\deg\LUB(\mu_t,\mu_{p})\leq \deg\mu_i+1$. Hence, in this case, the desired sequence can be obtained by starting with $\eta_1=\mu_p$, $\eta_2=\mu_t$, and 
then continuing with an appropriate sequence going from $\mu_t$ to $\mu_q$ (which exists by induction).

Finally, we prove (\ref{exis}). To do this, let $q<i$ and consider the two possible shapes that $\mu_i$ can take according to the construction of $\bar \mu$ above:

\medskip
\noindent \underline{Case 1.} Suppose $\mu_{i-1}=(u_1,\dots,u_{m-1},u_m)$ with $u_{m-1}>0$. Then, by construction of $\bar\mu$,
$$
\mu_{i}=(u_1,\dots,u_{m-1}-1,a),\quad \text{ where } a=g(i)-u_1-\cdots-u_{m-1}+1.
$$
Let $\mu_q=(v_1,\dots,v_m)$ and $1\leq l \leq m$ be the smallest integer such that the $l$-entry of $\mu_q$ is strictly larger than the $l$-entry of $\mu_{i}$. Note that we must have $l<m$. Indeed, since $q<i$, the $l$-entry is the first entry (from left to right) where $\mu_q$ and $\mu_{i}$ differ. By construction of $\bar\mu$, we can find $t<i$ such that $\mu_t$ has the form $(u_1,\dots,u_{l-1},w_l,\dots,w_m)$ with $w_l$ equal to $1+($the $l$-entry of $\mu_{i})$, and $w_p$ less than or equal to the $p$-entry of $\mu_{i}$ for $l<p\leq m$. Then $\mu_t<\LUB(\mu_q,\mu_i)$ and
$$
\deg \LUB(\mu_t,\mu_{i})=\deg\mu_i+1.
$$

\medskip
\noindent \underline{Case 2.} Suppose $\mu_{i-1}=(u_1,\dots,u_s,0,\dots,0,u_m)$ with $s<m-1$ and $u_s>0$. Then, by construction of $\bar\mu$, 
$$\mu_{i}=(u_1,\dots,u_s-1,a,0,\dots,0),\quad \text{ where } a=g(i)-u_1-\dots-u_s+1.$$
Let $\mu_q=(v_1,\dots,v_m)$ and $1\leq l\leq m$ be the smallest integer such that the $l$-entry of $\mu_q$ is strictly larger than the $l$-entry of $\mu_{i}$. The same reasoning as in Case 1 yields that $l\leq s$. Again by construction of $\bar\mu$, we can find $t<i$ such that $\mu_t$ has the form $(u_1,\dots,u_{l-1},w_l,\dots,w_{s},w_{s+1},0,\dots,0)$
with $w_l$ equal to $1+($the $l$-entry of $\mu_{i})$, and $w_p$ less than or equal to the $p$-entry of $\mu_{i}$ for $l<p\leq s+1$. Then $\mu_t<\LUB(\mu_q,\mu_i)$ and 
$$
\deg \LUB(\mu_t,\mu_{i})=\deg\mu_i+1.
$$ 
\end{proof}

It remains to show that $C_{r,m}^1\leq g(\L_{g,m}^1)+1$. To do this, suppose there is an antichain sequence $\bar\xi=(\xi_1,\dots,\xi_M)$ of $\N^m$ such that $D_{r,\bar\xi}\geq g(\L_{g,m}^1)+1$. We must show that then $D_{r,\bar\xi}\leq g(\L_{g,m}^1)+1$. 

The following result gives the relationship between the Hilbert-Samuel functions of $\bar\mu$ and $\bar \xi$. This is where Corollary \ref{HSdeg} is used.

\begin{theorem}\label{intheb}
With $\bar\mu$ and $\bar\xi$ as above, we have that 
$$
H_{\bar\xi}^i(d)\leq H_{\bar\mu}^i(d)
$$ 
for all $i, d\geq 0$. As a result, $D_{r,\bar\xi}\leq g(\L_{g,m}^1)+1$. 
\end{theorem}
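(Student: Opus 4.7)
The plan is to prove the pointwise Hilbert--Samuel inequality by induction on $i$, and then deduce the bound on $D_{r,\bar\xi}$ from it.

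First, I would record the compression property of $\bar\mu$: by inspecting the explicit recursive construction of $\bar\mu$ through cases (i) and (ii) above, one verifies that for every $i$ the set
\[
\{\eta\in\N^m : \deg\eta = d,\; \eta\geq \mu_j \text{ for some } j\leq i\}
\]
is a $d$-segment of $\N^m$ whenever $d\geq \deg\mu_i$. Consequently, the moreover clause of Macaulay's theorem (Theorem \ref{Mac}) yields $H_{\bar\mu}^i(d+1)=H_{\bar\mu}^i(d)^{\langle d\rangle}$ for all $d\geq \deg\mu_i$, so the Hilbert--Samuel function of $\bar\mu$ is pinned down by its value at one degree and the Macaulay operation.

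Next, I would prove $H_{\bar\xi}^i(d)\leq H_{\bar\mu}^i(d)$ for all $d\geq 0$ by induction on $i$. The base case $i=0$ is the trivial equality $\binom{m-1+d}{d}=\binom{m-1+d}{d}$. In the inductive step the hypothesis $D_{r,\bar\xi}\geq g(L)+1$ is essential: it says that condition \eqref{condition3} fails at every $p\in\{r,\dots,g(L)\}$, providing a non-connectivity witness at each such $p$. The contrapositive of Corollary \ref{HSdeg} then forces the strict Macaulay drop $H_{\bar\xi}^{M}(q+1)<H_{\bar\xi}^{M}(q)^{\langle q\rangle}$ at the relevant degrees $q$. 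Combining these strict drops for $H_{\bar\xi}$ with the Macaulay equality for the compressed $H_{\bar\mu}$ from the previous step, and using the monotonicity property~\eqref{sim}, one propagates the inequality from lower to higher degrees across each inductive transition $i-1 \to i$.

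Finally, to extract the bound on $D_{r,\bar\xi}$, apply the pointwise inequality at $d=g(L)$ to obtain $H_{\bar\xi}^M(g(L))\leq H_{\bar\mu}^L(g(L))=0$, so every element of $\N^m$ of degree $g(L)$ lies above some $\xi_j$. Since $\bar\xi$ is an antichain, this forces $\deg\xi_k\leq g(L)$ for every $k$: any $\xi_k$ of strictly larger degree would admit a predecessor of degree exactly $g(L)$ which would then lie above some $\xi_j$ with $j\neq k$, violating the antichain property. Hence $\Gamma_{\bar\xi}(g(L)+1)=\bar\xi$, and since $H_{\bar\xi}^M(g(L))=0$ and Macaulay force $H_{\bar\xi}^M(g(L)+1)=0=H_{\bar\xi}^M(g(L))^{\langle g(L)\rangle}$, an application of Corollary \ref{HSdeg} at $d=g(L)+1$ produces, for each pair $\beta_1,\beta_2\in\bar\xi$, a connecting sequence in $\Gamma(g(L))$ sitting strictly below $\LUB(\beta_1,\beta_2)$ of connection degree at most $g(L)+1$; this is precisely the data needed to verify condition \eqref{condition3} at $p=g(L)+1$, giving $D_{r,\bar\xi}\leq g(L)+1$.

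The hard part is the inductive step: one must carefully match the strict Macaulay drops for $H_{\bar\xi}^i$ (coming from the failures of \eqref{condition3} via Corollary \ref{HSdeg}) against the Macaulay equalities for the compressed $H_{\bar\mu}^i$, especially because $\xi_i$ and $\mu_i$ can live at different degrees so the contributions cannot simply be compared term by term. This translation between the combinatorial non-connectivity witnesses and the strict form of Macaulay's recurrence (Theorem \ref{strictmacaulay}) is the heart of the argument.
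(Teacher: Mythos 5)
Your overall strategy coincides with the paper's: induction on $i$, with the base case trivial, the compressed structure of $\bar\mu$ furnishing Macaulay \emph{equalities} on the $\bar\mu$ side, the failure of condition~\eqref{condition3} furnishing strict Macaulay \emph{drops} on the $\bar\xi$ side via Corollary~\ref{HSdeg}, and a final extraction of the bound from $H_{\bar\xi}^L(g(L))=0$. Your closing step is worked out correctly (and in fact spells out more than the paper does: $H_{\bar\xi}(g(L))=0$ gives the trivial Macaulay equality $0=0^{\langle g(L)\rangle}$ at the next degree, Corollary~\ref{HSdeg} then yields the connecting sequences, and antichain incomparability forces every $\xi_k$ to have degree at most $g(L)$ so that all pairs are covered). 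So there is no disagreement of approach.

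The gap is in the inductive step, which you acknowledge is ``the hard part'' but do not carry out, and the sketch you do give conflates two different objects. You write that the contrapositive of Corollary~\ref{HSdeg} forces strict drops $H_{\bar\xi}^{M}(q+1)<H_{\bar\xi}^{M}(q)^{\langle q\rangle}$, i.e.\ for the \emph{full} antichain $\bar\xi$. But the induction is on $i$, and what is needed is control of the \emph{truncated} functions $H_{\bar\xi}^{i}$ and $H_{\bar\xi}^{i+1}$: strict drops for $H^M_{\bar\xi}$ do not, by themselves, propagate the comparison $H_{\bar\xi}^{i}\leq H_{\bar\mu}^{i}$ to $i+1$. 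The paper's argument isolates a single critical degree $d=\deg\mu_{i+1}$ (where $H_{\bar\mu}^{i+1}$ drops by exactly one relative to $H_{\bar\mu}^{i}$) and proves the dichotomy: either $H_{\bar\xi}^{i+1}(d)<H_{\bar\xi}^{i}(d)$ or $H_{\bar\xi}^{i}(d)<H_{\bar\mu}^{i}(d)$. The proof of this dichotomy is by contradiction: assuming equalities everywhere, one combines the inductive hypothesis at $d-1$, property~\eqref{sim}, and Macaulay's theorem to force $H_{\bar\xi}^{i}(d)=H_{\bar\xi}^{i}(d-1)^{\langle d-1\rangle}$, deduce that no $\xi_j$ with $j\leq i+1$ has degree $d$, and then invoke Corollary~\ref{HSdeg} to conclude that condition~\eqref{condition3} already holds at $p=d<D_{r,\bar\mu}$, contradicting the hypothesis $D_{r,\bar\xi}\geq g(L)+1$. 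Only after this dichotomy is established at the one critical degree does the inequality propagate to all $d\geq\deg\mu_{i+1}$ via Macaulay and~\eqref{sim}. Your sketch names all the right ingredients but stops short of stating or proving this dichotomy and does not explain how the per-$i$ truncations enter, so as written the induction does not close.
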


\begin{proof}
We proceed by induction on $i$. For the base case $i=0$, we have
$$
H_{\bar\xi}^0(d)=\binom{m-1+d}{d}=H_{\bar\mu}^0 (d),
$$
which is the number of $m$-tuples of degree $d$. 

We now proceed with the induction step $i+1$. Note that, since $D_{r,\bar\xi}\geq g(\L_{g,m}^1)+1$, the sequence $\bar\xi$ contains at least two elements of degree at most $r$. It follows then that $H_{\bar\xi}^1(d)\leq H_{\bar\mu}^1(d)$ and $H_{\bar\xi}^2(d)\leq H_{\bar\mu}^2(d)$ for all $d\geq 0$. Thus, we assume that $i\geq 2$. We have that for $d<\deg \mu_{i+1}$, 
$$
H_{\bar\xi}^{i+1}(d)\leq H_{\bar\xi}^{i}(d)\leq H_{\bar\mu}^i(d)=H_{\bar\mu}^{i+1}(d).
$$
Now consider the case when $d = \deg \mu_{i+1}$ (note that $d>1$ since $r>0$ and $i\geq 2$). 

\medskip
\noindent {\bf Claim.} Either $H_{\bar\xi}^{i+1}(d)< H_{\bar\xi}^i(d)$ or $H_{\bar\xi}^i(d)< H_{\bar\mu}^i(d)$.

\smallskip
\noindent {\it Proof of Claim.} Towards a contradiction suppose 
\begin{equation}\label{WMA}
H_{\bar\xi}^{i+1}(d)=H_{\bar\xi}^i(d)=H_{\bar\mu}^i(d). 
\end{equation}
By the induction hypothesis, property \eqref{sim}, and Macaulay's theorem (Theorem \ref{Mac}), we get 
$$
H_{\bar\xi}^i(d-1)^{\langle d-1\rangle}\leq H_{\bar\mu}^i(d-1)^{\l d-1\r}=H_{\bar\mu}^i(d)=H_{\bar\xi}^i(d).
$$ 
By Macaulay's theorem, this inequality implies that $H_{\bar\xi}^i(d)=H_{\bar\xi}^i(d-1)^{\langle d-1\rangle}$. This equality, together with $H_{\bar\xi}^{i+1}(d)=H_{\bar\xi}^i(d)$, implies that $\deg\xi_j\neq d$ for all $j\leq i+1$ for which $\xi_j$ is defined. Thus, by Corollary~\ref{HSdeg}, condition~\eqref{condition3} is satisfied with $p=d$, so $D_{r,\bar\xi} \le d$.  Since $\deg \mu_{i+1} < D_{r,\bar\mu}$ by Proposition~\ref{botu}, we obtain
$$
D_{r,\bar\xi}\leq d=\deg\mu_{i+1}<D_{r,\bar\mu}.
$$
But this contradicts our assumption on $D_{r,\bar\xi}$, and so we have proven the claim.

\medskip

Hence, either $H_{\bar\xi}^{i+1}(d)< H_{\bar\xi}^i(d)$ or $H_{\bar\xi}^i(d)< H_{\bar\mu}^i(d)$. Induction yields then that $H_{\bar\xi}^{i+1}(d)< H_{\bar\mu}^i(d)$, which implies that 
$$
H_{\bar\xi}^{i+1}(d)\leq H_{\bar\mu}^i(d)-1=H_{\bar\mu}^{i+1}(d),
$$
as desired.  

Now let $d\geq\deg \mu_{i+1}$. By Macaulay's theorem
\begin{equation}\label{use1}
H_{\bar\xi}^{i+1}(d+1)\leq H_{\bar\xi}^{i+1}(d)^{\langle d\rangle},
\end{equation}
and
\begin{equation}\label{use2}
H_{\bar\mu}^{i+1}(d+1)=H_{\bar\mu}^{i+1}(d)^{\langle d\rangle}.
\end{equation}
It then follows, by induction on $d\geq \deg\mu_{i+1}$ and property \eqref{sim}, that 
\begin{equation}\label{use3}
H_{\bar\xi}^{i+1}(d)^{\langle d\rangle}\leq H_{\bar\mu}^{i+1}(d)^{\langle d\rangle}.
\end{equation}
Thus, putting \eqref{use1}, \eqref{use2}, and \eqref{use3} together, we get
$$
H_{\bar\xi}^{i+1}(d+1)\leq H_{\bar\mu}^{i+1}(d+1),
$$
and the result follows.

To prove the last statement, note that setting $i=L$ (recall $L = \L^1_{g,m}$) and $d=\deg\mu_L$ yields
$$
H_{\bar\xi}^L(\deg \mu_L)\leq H_{\bar\mu}^L(\deg\mu_L)=0.
$$
Thus, for every $\eta\in\N^m$ with $\deg \eta=\deg\mu_L$ we have that $\eta\geq \xi_j$ for some $\xi_j \in \bar\xi$. This implies that $D_{r,\bar\xi}\leq \deg\mu_L +1=g(\L_{g,m}^1)+1$.
\end{proof}

We can now conclude:

\begin{theorem}\label{UpperBound}
For all $r \geq 0$ we have 
$$
C_{r,m}^1=\L_{g,m}^1+r-1.
$$ 
In particular, if $r\geq 1$ then
\begin{equation}\label{reAc}
C_{r,m}^1=A(m-1,C_{r-1,m}^1)
\end{equation}
and
$$
C_{r,m}^1\leq A(m,r-1)-1,
$$
and if $r\geq 2$ then
$$
A(m,r-2)\leq C_{r,m}^1
$$
where $A$ denotes the Ackermann function.
\end{theorem}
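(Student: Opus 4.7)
The plan breaks naturally into three parts. First, the identity $C_{r,m}^1 = \L_{g,m}^1 + r - 1$ follows by combining the bounds already in place: Proposition~\ref{botu} witnesses the lower bound $C_{r,m}^1 \geq g(\L_{g,m}^1) + 1$ via the antichain $\bar\mu$, while Theorem~\ref{intheb} gives $D_{r,\bar\xi} \leq g(\L_{g,m}^1) + 1$ for every antichain sequence $\bar\xi$ of $\N^m$ (the case $D_{r,\bar\xi} < g(\L_{g,m}^1) + 1$ is trivial, and the case $D_{r,\bar\xi} \geq g(\L_{g,m}^1) + 1$ is exactly the content of the theorem). Taking the maximum yields $C_{r,m}^1 = g(\L_{g,m}^1) + 1$; the degenerate situations $r = 0$ or $m = 1$ are already handled by Proposition~\ref{basic}, and otherwise $\L_{g,m}^1 \geq 2$ so that $g(\L_{g,m}^1) = \L_{g,m}^1 + r - 2$, giving the claimed identity.

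Second, to derive the recursion $C_{r,m}^1 = A(m-1, C_{r-1,m}^1)$ for $r \geq 1$, I would use the algorithmic formula~\eqref{algobo} expressing $\L_{g,m}^1$ as $\Psi_{g,m}(1, (r, 0, \ldots, 0))$. Writing $g_r$ for the function $g$ associated to parameter $r$, the relation $g_r \equiv g_{r-1} + 1$ allows comparing the two runs of the recursion. Concretely, starting from $(1, (r, 0, \ldots, 0))$, the first step of $\Psi_{g_r, m}$ yields $(2, (r-1, 1, 0, \ldots, 0))$ (using $g_r(2) - g_r(1) = 0$), and from this state the recursion depletes the interior coordinates $u_2, \ldots, u_{m-1}$ before returning to the leading coordinate. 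A careful tracking of the increments $g(i+1) - g(i)$ that accumulate in $u_m$ at each step should match the two-parameter Ackermann recurrence $A(m-1, y) = A(m-2, A(m-1, y-1))$, giving the identity by induction on $m$ (base $m = 2$ with $A(1, y) = y + 2$ and $C_{r,2}^1 = 2r$ from Proposition~\ref{basic}(3)).

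Third, the Ackermann bounds follow from the recursion by induction on $r$. For the upper bound, the base case $r = 1$ gives $C_{1,m}^1 = A(m-1, 0) = A(m-2, 1) < A(m-1, 1) = A(m, 0)$ by strict monotonicity of $A$ in its first argument; the inductive step uses the identity $A(m, r-1) = A(m-1, A(m, r-2))$ together with strict monotonicity of $A(m-1, \cdot)$, yielding $C_{r,m}^1 = A(m-1, C_{r-1,m}^1) \leq A(m-1, A(m, r-2) - 1) \leq A(m-1, A(m, r-2)) - 1 = A(m, r-1) - 1$. The lower bound $C_{r,m}^1 \geq A(m, r-2)$ for $r \geq 2$ is symmetric, the base case being $C_{2,m}^1 = A(m-1, A(m-2, 1)) \geq A(m-1, 1) = A(m, 0)$, which uses $A(m-2, 1) \geq 1$.

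The main obstacle is the middle step: extracting the Ackermann-type recursion $C_{r,m}^1 = A(m-1, C_{r-1,m}^1)$ from the explicit recursion defining $\Psi_{g,m}$. The delicate combinatorial accounting required to match the two recursions---tracking how the $m$-tuple $(u_1, \ldots, u_m)$ evolves under the two cases of $\Psi_{g,m}$ and identifying the exact Ackermann step each phase of the process corresponds to---is the heart of the argument, and will likely require a double induction on $m$ and on the leading coordinate, possibly with auxiliary quantities counting the steps needed to deplete each intermediate coordinate.
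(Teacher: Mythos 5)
Your handling of the first identity, $C_{r,m}^1 = \L_{g,m}^1 + r - 1$, matches the paper exactly: combine Proposition~\ref{botu} (lower bound) with Theorem~\ref{intheb} (upper bound) and rewrite $g(\L_{g,m}^1)+1$ as $\L_{g,m}^1 + r - 1$ for $r \geq 1$, $m \geq 2$, handling the degenerate cases via Proposition~\ref{basic}. Your deduction of the two Ackermann bounds \emph{from the recursion}~\eqref{reAc} by induction on $r$ is also correct as far as it goes (the inequalities $A(m-1,y-1)\leq A(m-1,y)-1$ and the base cases all check out).

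The gap is in the middle step, and you flag it yourself: you propose to extract $C_{r,m}^1 = A(m-1,C_{r-1,m}^1)$ by directly unwinding the $\Psi_{g,m}$ recursion from~\eqref{algobo}, carefully tracking how the tuple evolves and matching phases of the process to Ackermann steps. You characterize this as "the heart of the argument" requiring a "delicate combinatorial accounting" and a double induction "possibly with auxiliary quantities" — in other words, you identify the task without actually performing it. The paper sidesteps this bookkeeping entirely by citing Moreno Soc\'{\i}as's theorem (Proposition~1.1 of \citep{Socias}): for $f(i)=s+i-1$ with $s\geq 1$, one has $\L_{f,m}^1 = A(m,s-1)-s$. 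With this in hand, the recursion follows from a structural observation about $\bar\mu$: the terms with first coordinate $u_1 \geq 1$ end at $(1,0,\dots,0,C_{r-1,m}^1-1)$, and the remaining terms — those with $u_1 = 0$ — form, after dropping the first coordinate, precisely the maximal antichain in $\N^{m-1}$ of degree growth bounded by $i \mapsto C_{r-1,m}^1 + i - 1$. Applying Moreno Soc\'{\i}as to that tail gives its length as $A(m-1,C_{r-1,m}^1)-C_{r-1,m}^1-1$, from which the degree of $\mu_L$, and hence $C_{r,m}^1$, reads off as $A(m-1,C_{r-1,m}^1)$. The paper also derives the two Ackermann bounds directly from Moreno Soc\'{\i}as by comparing $g$ with $h(i)=r+i-1$ and $H(i)=r+i-2$, rather than by the $r$-induction you propose; both routes are valid, but the paper's is shorter once Moreno Soc\'{\i}as is available. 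The missing ingredient in your proposal is therefore this citation — without it the recursion is left as a sketch, and reproving Moreno Soc\'{\i}as's formula from scratch via the $\Psi_{g,m}$ recursion is a substantial undertaking that your outline does not carry out.
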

\begin{proof}
By the discussion above, all that is left to prove is the ``in particular'' clause. In Proposition 1.1 of \citep{Socias} Moreno Soc\'ias shows that if $f:\N_{>0}\to\N$ is a function of the form $f(i)=s+i-1$, for some integer $s\geq 1$, then $\L_{f,m}^1=A(m,s-1)-s$. Now, by Proposition \ref{basic}, $C_{r,1}^1=r$; on the other hand, $A(0,C_{r-1,1}^1)=C_{r-1,m}^1+1=r$, so (\ref{reAc}) holds when $m=1$. Assume $m>1$.  Observe that the antichain sequence $\bar\mu$ defined above has the form
\begin{multline*}
(r,0,\dots,0),(r-1,1,0,\dots,0),(r-1,0,2,0,\dots,0),\dots,(1,0,\dots,0,C_{r-1,m}^1-1), \\
(0,C_{r-1,m}^1+1,0,\dots,0), (0,C_{r-1,m}^1,2,0,\dots,0),\dots, (0,0,C_{r,m}^1-1).
\end{multline*}
By the result of Moreno Soc\'ias, the length of the sequence in the second line equals $A(m-1,C_{r-1,m}^1)-C_{r-1,m}^1-1$. Hence, the degree of the last tuple of the sequence equals $A(m-1,C_{r-1,m}^1)-1$. Consequently, $C_{r,m}^1=A(m-1,C_{r-1,m}^1)$, as desired.

Now consider the function $h:\N_{>0}\to\N$ given by $h(i)=r+i-1$. Then $g(i)\leq h(i)$ for all $i$, and so $\L_{g,m}^1\leq \L_{h,m}^1=A(m,r-1)-r$. Hence, 
$$
C_{r,m}^1=\L^1_{g,m}+r-1\leq A(m,r-1)-1.
$$
For the second inequality consider the function $H(i)=r+i-2$. Then $H(i)\leq g(i)$ for all $i$, and so $\L_{g,m}^1\geq \L_{H,m}^1=A(m,r-2)-r+1 $. Hence, 
$$
C_{r,m}^1=\L_{g,m}^1+r-1\geq A(m,r-2).
$$
\end{proof}

We finish this subsection with some computations for small values of $m$.

\begin{corollary}\label{specificvalues}
For any integer $r\geq 0$, we have $C_{r,2}^1=2r$ and $C_{r,3}^1=3(2^r-1)$.
\end{corollary}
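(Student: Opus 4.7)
The plan is to use the recursive formula $C_{r,m}^1 = A(m-1, C_{r-1,m}^1)$ established in Theorem~\ref{UpperBound} together with the base case $C_{0,m}^1 = 0$ from Proposition~\ref{basic}(1), and to evaluate the recursion explicitly using the well-known closed forms $A(1,y) = y+2$ and $A(2,y) = 2y+3$ of the Ackermann function. The proof is then a straightforward induction on $r$ in each of the two cases.

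For the case $m = 2$, I would substitute $A(1, y) = y+2$ into the recursion to obtain $C_{r,2}^1 = C_{r-1,2}^1 + 2$. Combined with $C_{0,2}^1 = 0$, an immediate induction on $r$ yields $C_{r,2}^1 = 2r$.

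For the case $m = 3$, I would substitute $A(2, y) = 2y + 3$ into the recursion to obtain the linear recurrence $C_{r,3}^1 = 2\,C_{r-1,3}^1 + 3$ with initial value $C_{0,3}^1 = 0$. This can be solved by rewriting it as $C_{r,3}^1 + 3 = 2(C_{r-1,3}^1 + 3)$, so that the sequence $C_{r,3}^1 + 3$ is geometric with ratio $2$ and initial value $3$. Therefore $C_{r,3}^1 + 3 = 3 \cdot 2^r$, giving $C_{r,3}^1 = 3(2^r - 1)$, as desired.

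There is essentially no obstacle here: all the work was done in Theorem~\ref{UpperBound}, and this corollary is merely an explicit unwinding of the Ackermann recursion for the two small values $m = 2$ and $m = 3$, where $A(m-1, \cdot)$ has a simple closed form.
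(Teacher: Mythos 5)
Your proposal is correct and follows essentially the same approach as the paper: both proofs use the recursion $C_{r,m}^1 = A(m-1, C_{r-1,m}^1)$ from Theorem~\ref{UpperBound}, the base case $C_{0,m}^1 = 0$ from Proposition~\ref{basic}(1), and the explicit forms $A(1,y)=y+2$ and $A(2,y)=2y+3$ to close the induction on $r$.
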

\begin{proof}
We prove the result by induction on $r$. The base case $r=0$ is clear. For the induction step we consider the cases for $m=2$ and $m=3$ separately.

\medskip
\noindent \underline{$m = 2$.}  In Proposition \ref{basic}(3) we proved that $C^1_{r,2} = 2r$ using the definition of $C^n_{r,m}$, but Theorem \ref{UpperBound} provides a more direct way of computing it. Assume $C_{r-1,2}^1=2(r-1)$. Then, by (\ref{reAc}) of Theorem \ref{UpperBound}, 
$$
C_{r,2}^1=A(1,C_{r-1,m}^1)=A(1,2r-2)=2r.
$$

\medskip
\noindent \underline{$m=3$.} Assume $C_{r-1,3}^1=3(2^{r-1}-1)$. Then, by \eqref{reAc} above,
$$
C_{r,3}^1=A(2,C_{r-1,3}^1)=A(2,3(2^{r-1}-1))=2(3(2^{r-1}-1))+3=3(2^r-1).
$$

\end{proof}

\subsection{The case $n>1$.}\label{todefgn}

We now extend the results of the previous subsection to arbitrary $n\geq 1$. Let $r_1:=r$ and $g_1:\N_{>0}\to \N$ be defined as $g_1(i)=r$ and $g_1(i)=i+r-2$ for $i\geq 2$. For $n>1$, we define $r_n$ and $g_n:\N_{>0}\to \N$ recursively by 
$$
r_n:=\L_{g_{n-1},m}^{n-1}+r-(n-1)
$$
and 
$$
g_n(i) = \begin{cases} g_{n-1}(i) & \text{ if } i\leq\L_{g_{n-1},m}^{n-1} \\ r_n & \text{ if } i = \L_{g_{n-1},m}^{n-1}+1 \\ i+r_n-\L_{g_{n-1},m}^{n-1}-2 & \text{ if } i\geq \L_{g_{n-1},m}^{n-1}+2 \end{cases}
$$
Note that $r_2=\L_{g_1,m}^1+r-1=C_{r,m}^1$ (by Theorem~\ref{UpperBound}).

We will prove that
\begin{equation}\label{arbn1}
C_{r,m}^n=\L_{g_n,m}^n+r-n.
\end{equation}
This will imply that 
$$
C_{r,m}^n=C_{C_{r,m}^{n-1},m}^1 \quad \text{ for }n\geq 2.
$$
In Proposition \ref{basic} we proved that \eqref{arbn1} holds in the case $r=0$ or $m=1$. We now assume $r\geq 1$ and $m\geq 2$. In this case $\L_{g_n,m}^n\geq \L_{g_{n-1},m}^{n-1}+2$, and so by definition of $r_n$ and $g_n$ we get
$$
g_n(\L_{g_n,m}^n)+1=\L^n_{g_n,m}+r_n-\L_{g_{n-1},m}^{n-1}-1=\L_{g_n,m}^n+r-n.
$$

Thus, to prove \eqref{arbn1} it suffices to prove
\begin{equation}\label{arbn2}
C_{r,m}^n=g_n(\L_{g_n,m}^n)+1.
\end{equation}

Let $\bar\mu = (\mu_1,\dots,\mu_L)$ be the antichain sequence in $\Nn$ defined recursively as follows:
$$
\mu_1 = \max_{\unlhd}\{\alpha \in \Nn : \deg \alpha = g_n(1)\},
$$
and, as long as it is possible, 
$$
\mu_i = \max_{\unlhd}\{\alpha \in \Nn : \deg \alpha = g_n(i) \text{ and } \alpha \not \geq \mu_1,\dots,\mu_{i-1}\}.
$$

In \citep[\S3.3]{LeOv}, it is shown that $\bar\mu$ is an antichain sequence of $\Nn$ having length $L = \L_{g_n,m}^n$ (i.e., $\bar \mu$ is of maximal length among antichain sequences of $\Nn$ with degree growth bounded by $g_n$). It is also observed that 
$$
H_{\bar\mu}^L(\deg \mu_L)=H_{\bar\mu}^L(g_n(L))=0,
$$
where $H_{\bar \mu}^i$ denotes the Hilbert-Samuel function of $\bar \mu$, that is, for $i,d\geq 0$,
$$
H_{\bar\mu}^i(d) = |\{\alpha \in \Nn : \deg \alpha = d \text{ and } \alpha \not \geq \mu_j \text{ for all } j \leq i \text{ for which } \mu_j \text{ is defined}\}|.
$$

The antichain sequence $\bar \mu$ can be more explicitly constructed as follows:

Let $\bar \mu^{(1)}$ be the antichain sequence of maximal length with degree growth bounded by $f_1(i):=g_1(i)$ constructed in Section \ref{subsec1} inside of $\N^m\times\{n\}$ (i.e., the $n$-th copy of $\N^m$ in $\Nn$). Let $L_1$ be the length of $\bar\mu^{(1)}$; in other words $L_1=\L_{f_1,m}^1$. Thus, $\bar\mu^{(1)}$ is of the form 
$$
((\mu^{(1)}_1,n),\dots,(\mu^{(1)}_{L_1},n)).
$$ 
Similarly, let $\bar \mu^{(2)}$ be the antichain sequence of maximal length with degree growth bounded by $f_2(i):=g_2(i+L_1)$ inside of $\N^m\times\{n-1\}$, and let $L_2$ be the length of $\bar\mu^{(2)}$ (that is, $L_2=\L_{f_2,m}^1$). Then, 
$$
\bar\mu^{(2)}=((\mu^{(2)}_1,n-1),\dots,(\mu^{(2)}_{L_2},n-1)).
$$
Continuing in this fashion, we build $\bar \mu^{(j)}$ for $j=3,\dots n$ as the antichain sequence of maximal length with degree bounded growth bounded by 
$$
f_{j}(i)=g_j(i+L_1+\cdots+ L_{j-1})
$$
inside of $\N^m\times\{n-j+1\}$, and let $L_j$ be the length of $\bar\mu^{(j)}$ (that is, $L_j=\L_{f_j,m}^1$). Then the sequence $\bar \mu$ is the concatenation of $\bar \mu^{(1)},\dots,\bar \mu^{(n)}$; in particular, 
$$
\L_{g_n,m}^n=\L_{f_1,m}^1+\cdots+\L_{f_n,m}^1
$$
(see \citep[Proposition 3.13]{LeOv}). Also, note that this implies that 
\begin{equation}\label{tocomp}
\L_{g_n,m}^n=\L_{g_{n-1},m}^{n-1}+\L_{f_n,m}^1.
\end{equation}
From this construction of $\bar\mu$, one obtains the following recursive formula
$$
\L_{g_n,m}^n=\Psi_{f_1,m}(1,(f_1(1),0,\dots,0))+\cdots+\Psi_{f_n,m}(1,(f_n(1),0,\dots,0)),
$$
where $\Psi_{f_j,m}$ is defined as in \eqref{algobo} with $f_j$ in place of $g$.

We now prove \eqref{arbn2}. First, we show that $C_{r,m}^n\geq g(\L_{g_n,m}^n)+1$.

\begin{lemma}
With $\bar\mu$ as above, we have $D_{r,\bar\mu}=g_n(\L_{g_n,m}^n)+1$. In particular, $C_{r,m}^n \ge g_n(\L_{g_n,m}^n)+1$.
\end{lemma}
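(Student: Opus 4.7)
The plan is to mimic the structure of Proposition~\ref{botu}, taking advantage of the block structure of $\bar\mu$ as the concatenation $\bar\mu^{(1)},\dots,\bar\mu^{(n)}$ with $\bar\mu^{(j)}$ lying inside the slice $\N^m\times\{n-j+1\}$. The crucial observation is that elements of $\Nn$ with distinct second coordinates are incomparable under $\le$ and that $\gamma$ operates only on pairs sharing the same second coordinate; thus
$$
\gamma(\bar\mu)=\bigsqcup_{j=1}^n\gamma(\bar\mu^{(j)}),
$$
and for every $(\tau,l)\in\gamma(\bar\mu)$, the elements of $\bar\mu$ lying below $(\tau-{\bf i},l)$ or $(\tau-{\bf j},l)$---together with any valid connecting sequence from condition~\eqref{condition3}, which by definition must have second coordinate $l$---lie in the single slice where $l=n-j+1$. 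Consequently, condition~\eqref{condition3} holds for $\bar\mu$ at parameter $p$ if and only if it holds for each $\bar\mu^{(j)}$, viewed as an antichain in $\N^m$ via the identification $\N^m\times\{n-j+1\}\cong \N^m$, at the same parameter~$p$.

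Next, I would identify each projected slice $\bar\mu^{(j)}$ with the sequence of Section~\ref{subsec1} built for parameter $r_j$ in place of $r$. A direct calculation from the recursive definitions of $g_n$ and $r_n$ shows that $f_j(1)=r_j$ and $f_j(i)=i+r_j-2$ for $i\ge 2$, so $f_j$ is precisely the function ``$g$'' of Section~\ref{subsec1} with $r$ replaced by $r_j$; since the greedy construction rule for $\bar\mu^{(j)}$ matches that of Section~\ref{subsec1}, the projected sequence is the $\bar\mu$ of that subsection for parameter $r_j$. Applying Proposition~\ref{botu} with $r\leftarrow r_j$ then gives $D_{r_j,\bar\mu^{(j)}}=f_j(L_j)+1$. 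Moreover, condition~\eqref{condition3} is monotone in $p$ once $p$ exceeds the maximum degree of the elements of $\bar\mu^{(j)}$: the pair-set in $\gamma(\bar\mu^{(j)})$ stops growing, $\gamma(\bar\mu^{(j)})\setminus\Gamma(p)$ only shrinks, and the connecting-sequence degree requirement $\le p$ only weakens; hence the condition holds for $\bar\mu^{(j)}$ at every $p\ge f_j(L_j)+1$. For $p<r_j$ the set $\Gamma_{\bar\mu^{(j)}}(p)$ is empty, so slice $j$ is vacuously satisfied; the remaining ``bad'' range for slice $j$ is exactly $[r_j,f_j(L_j)]$.

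Combining these observations, condition~\eqref{condition3} for $\bar\mu$ fails at parameter $p$ if and only if $p\in[r_j,f_j(L_j)]$ for some~$j$. The identity $f_j(L_j)+1=r_{j+1}$ for $j<n$ (a direct check from the recursive definitions, using $r_{j+1}=L_1+\cdots+L_j+r-j$) together with $f_n(L_n)=g_n(\L_{g_n,m}^n)$ and $r_1=r$ shows that these intervals tile $[r,g_n(\L_{g_n,m}^n)]$ exactly. Hence the smallest $p\ge r$ at which condition~\eqref{condition3} holds for $\bar\mu$ is $g_n(\L_{g_n,m}^n)+1$, so $D_{r,\bar\mu}=g_n(\L_{g_n,m}^n)+1$. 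The ``in particular'' clause $C_{r,m}^n\ge g_n(\L_{g_n,m}^n)+1$ then follows immediately from the definition of $C_{r,m}^n$ as the supremum of $D_{r,\bar\alpha}$ over all antichain sequences.

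The main obstacle will be the careful bookkeeping needed to justify the slice decoupling of condition~\eqref{condition3}---especially the claim that connecting sequences are forced to stay within a single slice---and verifying the interval identity $f_j(L_j)+1=r_{j+1}$ that produces the tiling of $[r,g_n(\L_{g_n,m}^n)]$. Once these are in place, the result reduces cleanly to $n$ parallel applications of Proposition~\ref{botu}.
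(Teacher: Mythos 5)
Your proof is correct and rests on the same two pillars the paper uses: the slice decoupling of condition \eqref{condition3} and Proposition~\ref{botu} applied within each slice $\N^m\times\{n-j+1\}$. Where you differ is purely in the packaging: the paper proceeds by induction on $n$, applying the inductive hypothesis to the concatenation $\bar\mu'$ of the first $n-1$ slices to get $D_{r,\bar\mu'}=r_n$, then invoking Remark~\ref{changevar} to shift the base parameter from $r$ to $r_n$ (which quietly absorbs the ``interval tiling'' you carry out by hand), and finally reducing to a single application of Proposition~\ref{botu} on the last slice. You unwind this induction into a direct computation of the ``bad intervals'' $[r_j,f_j(L_j)]$ for each slice and verify explicitly that $f_j(L_j)+1=r_{j+1}$, so they tile $[r,g_n(\L_{g_n,m}^n)]$. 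Your version is more transparent about why the decoupling works (the paper states $D_{r_n,\bar\mu}=D_{r_n,\bar\mu^{(n)}}$ without comment, whereas you spell out that $\gamma$ and the connecting sequences are slice-local), at the cost of having to argue monotonicity of condition \eqref{condition3} above the top degree of each slice---a step the paper avoids by leaning on Remark~\ref{changevar}. Both arguments are sound; yours is a flattened, more explicit rendering of essentially the same proof.
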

\begin{proof}
We proceed by induction on $n$. The case $n=1$ is given in Proposition \ref{botu}. Assume it holds for $n-1$. Then $D_{r,\bar\mu'}=g_{n-1}(\L_{g_{n-1},m}^{n-1})+1=r_n$, where $\bar\mu'$ is the concatenation of $\bar\mu^{(1)},\dots,\bar\mu^{(n-1)}$. Since $\bar\mu$ is the concatenation of $\bar\mu'$ and $\bar\mu^{(n)}$, we have that $D_{r,\bar\mu}\geq D_{r,\bar\mu'}$. Thus, by Remark~\ref{changevar}, $D_{r,\bar\mu}=D_{r_n,\bar\mu}$. It follows that $D_{r,\bar\mu}=D_{r_n,\bar\mu^{(n)}}$. Since $\deg\mu^{(n)}_1=r_n$, Proposition \ref{botu} (applied with $\bar\mu^{(n)}$  and $r_n$ in place of $\bar\mu$ and $r$, respectively) yields 
$$
D_{r_n,\bar\mu^{(n)}}=\deg\mu^{(n)}_{L_n}+1=g_n(\L_{g_n,m}^n)+1,
$$
as desired. 
\end{proof}

We now prove that $C_{r,m}^n\leq g_n(\L_{g_n,m}^n)+1$. To do this, suppose there is an antichain sequence $\bar\alpha=(\alpha_1,\dots,\alpha_M)$ of $\Nn$ such that $D_{r,\bar\alpha}\geq g_n(\L_{g_n,m}^n)+1$. We must show that then $D_{r,\bar\alpha}\leq g_n(\L_{g_n,m}^n)+1$. We prove this in Theorem~\ref{theforn} below which establishes the relationship between the Hilbert-Samuel functions of $\bar\mu$ and $\bar\alpha$. We will make use of the following combinatorial lemma (for a proof see \citep[Lemma~1.3]{coLeOv}).

\begin{lemma}\label{techlem1}
Let $m$ and $d$ be positive integers. Suppose $a_1,\dots, a_t$ and $b_1,\dots,b_s$ are sequences of nonnegative integers such that 
$$b_1\leq b_2 =\cdots=b_s=\binom{m-1+d}{d}$$ 
and $b_s\geq a_i$ for all $i\leq t$. If $a_1+\cdots+a_t\leq b_1+\cdots+b_s$, then
$$
a_1^{\langle d\rangle}+\cdots+a_t^{\langle d\rangle}\leq b_1^{\langle d\rangle}+\cdots+b_s^{\langle d\rangle}.
$$
\end{lemma}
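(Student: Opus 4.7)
My plan is a reduction-plus-exchange argument for the extremality of the sequence $(b_j)$, whose shape (all entries equal to $M := \binom{m-1+d}{d}$ except possibly one) is precisely the configuration of maximum $\sum \phi(\,\cdot\,)$, with $\phi(x) := x^{\langle d\rangle}$, among nonnegative integer sequences with fixed sum and each term at most $M$.

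First I would reduce to equal sums. If $\sum_i a_i < \sum_j b_j$, I successively increment some $a_i$ with $a_i < b_s = M$; by the strict monotonicity of $\phi$ (property~\eqref{sim}), this only increases the LHS. So it suffices to prove the lemma under $\sum_i a_i = \sum_j b_j = (s-1)M + b_1$.

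The heart of the argument is to prove two exchange inequalities for $\phi$ on $\{0,1,\dots,M\}$:
\begin{enumerate}
\item[(A)] if $a + b \leq M$, then $\phi(a) + \phi(b) \leq \phi(a+b)$ (subadditivity below the cap);
\item[(B)] if $a + b \geq M$ and $a,b \leq M$, then $\phi(a) + \phi(b) \leq \phi(M) + \phi(a+b-M)$ (concentration at the cap).
\end{enumerate}
Given these, I would iteratively transform any admissible $(a_i)$ into $(M,\dots,M,b_1)$ without decreasing $\sum_i \phi(a_i)$: merge mergeable pairs via (A), then push non-mergeable pairs via (B). The process terminates at the claimed extremal shape, whose $\phi$-sum is exactly $\sum_j \phi(b_j)$. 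To prove (A) and (B) themselves, I would pass to the complementary function $\psi(a) := a^{(m)}$ via the identity $\phi(b) = \binom{m+d}{d+1} - \psi(M-b)$ and exploit the combinatorial description $\psi(a) = |(1,\dots,m)\cdot N_{a,d}|$ from Section~\ref{Macaulay}. Inequality (A) then reduces to the natural subadditivity of the shift operator on disjoint unions.

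The main obstacle will be (B). Since $\phi$ is \emph{not} pointwise convex in general---a quick check for $m=3$, $d=2$ gives the values $0,1,2,4,5,7,10$, whose first differences $1,1,2,1,2,3$ are not monotone---the inequality cannot follow from convexity alone. Instead, (B) encodes a ``top-heavy'' property: the jump of size $x$ in $\phi$ is maximized at the top end of the scale, i.e.\ $\phi(M) - \phi(M-x) \geq \phi(z+x) - \phi(z)$ for all $0 \leq z \leq M-x$. I would attempt to prove this via a careful analysis of the $\unlhd$-initial segments of $\Gamma(d)$---showing combinatorially that shifts ``save'' the most overlap on segments closest to all of $\Gamma(d)$---or, as a fallback, by direct manipulation of the $d$-binomial representation using Pascal's identity.
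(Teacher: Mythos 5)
The paper does not prove this lemma internally; it simply cites \citep[Lemma~1.3]{coLeOv}, so there is nothing in the paper to compare against directly. Evaluated on its own terms, your proposal has a reasonable skeleton (the reduction to equal sums and the exchange scheme, up to a minor fixable gap when the $a_i$'s cannot be raised to reach $\sum b_j$ because $t<s$), but the two exchange inequalities (A) and (B) carry the entire weight of the argument, and the proposal neither proves them nor correctly identifies what they reduce to under the $\phi\leftrightarrow\psi$ duality.

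The duality translation is backwards. Write $M=\binom{m-1+d}{d}$, $\phi(x)=x^{\langle d\rangle}$, $\psi(y)=y^{(m)}$, and use $\phi(x)=\binom{m+d}{d+1}-\psi(M-x)$ together with $\psi(M)=\binom{m+d}{d+1}$. Substituting $u=M-a$, $v=M-b$, inequality (A) (for $a+b\le M$, so $u+v\ge M$) becomes
$$\psi(M)+\psi(u+v-M)\le\psi(u)+\psi(v),$$
the ``concentration at the cap'' form, \emph{not} subadditivity. It is (B) (for $a+b\ge M$, so $u+v\le M$) that becomes subadditivity $\psi(u+v)\le\psi(u)+\psi(v)$. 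So the inequality you claim reduces to ``the natural subadditivity of the shift operator'' is actually (B), the one you flag as the main obstacle, and what you would need for (A) is precisely the hard top-heavy/concentration inequality.

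More seriously, the ``natural subadditivity of the shift operator on disjoint unions'' step is false as you state it. Subadditivity of $\psi$ does \emph{not} follow from decomposing $N_{u+v,d}$ into the top $u$ and the next $v$ monomials, because the shift of a non-top block of $v$ consecutive degree-$d$ monomials can be strictly larger than $\psi(v)$. Concretely, take $m=3$, $d=2$, so $M=6$ and the degree-$2$ monomials in decreasing $\unlhd$-order are $(2,0,0),(1,1,0),(1,0,1),(0,2,0),(0,1,1),(0,0,2)$. Then $N_{2,2}=\{(2,0,0),(1,1,0)\}$ and $(1,2,3)\cdot N_{2,2}$ has $5$ elements, so $\psi(2)=5$; but the next block $\{(1,0,1),(0,2,0)\}$ has shift of size $6>5$. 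The disjoint-union bound then gives $\psi(4)\le 5+6=11$, which is weaker than the target $\psi(4)\le 2\psi(2)=10$ and so useless. (The inequality $\psi(4)=8\le 10$ is still true, but the slack lives entirely in the overlap between the two image sets, which a disjoint-union argument discards.) Since both $\phi$ and $\psi$ are provably non-convex/non-concave — your own example $m=3$, $d=2$ gives $\phi$-increments $1,1,2,1,2,3$ and $\psi$-increments $3,2,1,2,1,1$ — no soft monotonicity or majorization argument closes the gap, and a genuine combinatorial proof of both (A) and (B) is still needed before the exchange strategy can be deployed.
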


\begin{theorem}\label{theforn}
With $\bar\mu$ and $\bar\alpha$ as above, we have that 
$$
H_{\bar\alpha}^i(d)\leq H_{\bar\mu}^i(d)
$$
for all $i,d\geq 0$. As a result, $D_{r,\bar\alpha}\leq g_n(\L_{g_n,m}^n)+1$.
\end{theorem}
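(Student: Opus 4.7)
The approach mirrors the proof of Theorem \ref{intheb}, which handled the case $n=1$, adapted to the multi-indeterminate setting $\Nn = \N^m \times \n$. The main inequality $H_{\bar\alpha}^i(d) \leq H_{\bar\mu}^i(d)$ will be established by induction on $i$, and the ``As a result'' clause will follow by taking $i = L := \L_{g_n,m}^n$ and $d = \deg \mu_L$: then $H_{\bar\alpha}^L(\deg \mu_L) \leq H_{\bar\mu}^L(\deg \mu_L) = 0$, which forces every element of $\Nn$ of degree $\deg \mu_L$ to dominate some $\alpha_j$, so that $D_{r,\bar\alpha} \leq \deg\mu_L + 1 = g_n(L)+1$ by precisely the argument used at the end of Theorem \ref{intheb}.

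The base case $i = 0$ gives equality, both sides counting the $n \binom{m-1+d}{d}$ elements of degree $d$ in $\Nn$. For the inductive step I would case-split on $d$ relative to $\deg \mu_{i+1}$. When $d < \deg \mu_{i+1}$, the argument is identical to the $n=1$ case. For $d > \deg \mu_{i+1}$, I would induct on $d$ via a Macaulay-type bound; the new difficulty is that the Hilbert-Samuel function on $\Nn$ decomposes as a sum $H_{\bar\alpha}^i(d) = \sum_{k=1}^n H_{\bar\alpha_k^{(i)}}(d)$ over the $n$ copies of $\N^m$, where $\bar\alpha_k^{(i)}$ is the restriction of $(\alpha_1,\dots,\alpha_i)$ to copy $k$, and the operation $a \mapsto a^{\langle d \rangle}$ is not additive. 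This is precisely where Lemma \ref{techlem1} enters: because $\bar\mu$ is constructed greedily in the $\unlhd$-order, filling copies of $\N^m$ one at a time starting from copy $n$, the multiset $\{H_{\bar\mu_k^{(i)}}(d)\}_{k=1}^n$ has all entries equal to either $0$ or $\binom{m-1+d}{d}$ except possibly one intermediate value (from the currently-filling copy). The hypothesis of Lemma \ref{techlem1} is thus met, yielding $H_{\bar\alpha}^{i+1}(d)^{\langle d \rangle} \leq H_{\bar\mu}^{i+1}(d)^{\langle d \rangle}$, which combined with Macaulay's theorem on $\bar\mu$ (equality, by compressedness within each copy) propagates the inductive bound to $d+1$.

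The critical case is $d = \deg \mu_{i+1}$, where I mimic the Claim in Theorem \ref{intheb}: either $H_{\bar\alpha}^{i+1}(d) < H_{\bar\alpha}^i(d)$ or $H_{\bar\alpha}^i(d) < H_{\bar\mu}^i(d)$, and in either case the inductive inequality at the next stage follows. To prove the claim, I would suppose for contradiction that $H_{\bar\alpha}^{i+1}(d) = H_{\bar\alpha}^i(d) = H_{\bar\mu}^i(d)$; chasing the inductive bound together with Lemma \ref{techlem1} and Macaulay's theorem backwards pins down a copy $k^\ast$ in which the per-copy equality $H_{\bar\alpha_{k^\ast}^{(i)}}(d) = H_{\bar\alpha_{k^\ast}^{(i)}}(d-1)^{\langle d-1 \rangle}$ must hold; the relation $H_{\bar\alpha}^{i+1}(d) = H_{\bar\alpha}^i(d)$ also forces $\deg \alpha_j \neq d$ for all $j \leq i+1$. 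Applying Corollary \ref{HSdeg} to $\bar\alpha_{k^\ast}$ inside $\N^m$ produces the connecting sequences required by condition \eqref{condition3} at the integer $p = d$ for all pairs of leaders sharing indeterminate index $k^\ast$ (the other indices being handled analogously or vacuously by the same argument in their own copies), giving $D_{r,\bar\alpha} \leq d = \deg\mu_{i+1} < g_n(L)+1$, which contradicts the standing hypothesis $D_{r,\bar\alpha} \geq g_n(L)+1$.

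The main obstacle is the combinatorial bookkeeping around Lemma \ref{techlem1}: one must verify that the structural description of $\bar\mu$ (at most one partially-filled copy at each stage, with other copies either saturated or untouched) persists across degrees so that the lemma's hypotheses hold uniformly, and that running the lemma in reverse isolates a single copy where Corollary \ref{HSdeg} can be invoked to yield condition \eqref{condition3}. Once this machinery is in place, the remaining details match the $n=1$ proof, and the final statement $D_{r,\bar\alpha} \leq g_n(\L_{g_n,m}^n)+1$ follows directly from the main inequality as sketched above.
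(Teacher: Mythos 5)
Your proposal follows essentially the same route as the paper's proof: the same decomposition of $H^i_{\bar\alpha}$ and $H^i_{\bar\mu}$ into $n$ per-copy Hilbert-Samuel functions, the same use of Lemma \ref{techlem1} (justified by the structural observation that $\bar\mu$ saturates the copies of $\N^m$ one at a time so that at most one is partially filled), the same three-way case split on $d$ versus $\deg\mu_{i+1}$, and the same contradiction via Corollary \ref{HSdeg} in the critical case. One small imprecision: in the Claim, the squeeze obtained from the induction hypothesis, Lemma \ref{techlem1}, and Macaulay's theorem together with the assumed equality $\sum_k H^{i,k}_{\bar\alpha}(d)=\sum_k H^{i,k}_{\bar\mu}(d)$ actually forces $H^{i,k}_{\bar\alpha}(d)=H^{i,k}_{\bar\alpha}(d-1)^{\langle d-1\rangle}$ for \emph{every} copy $k$ simultaneously (since the term-by-term Macaulay inequalities are sandwiched), not merely for a single distinguished $k^\ast$; this is precisely what lets Corollary \ref{HSdeg} be invoked in each copy at once to verify condition \eqref{condition3} for all indeterminate indices $l$.
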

\begin{proof}
First we make some observations. For any antichain sequence $\bar\beta$ of $\Nn$ and each $1\leq j\leq n$, we let $H_{\bar\beta}^{i,j}$ be the Hilbert-Samuel function of the subsequence of $\bar\beta$ consisting of its elements inside of $\N\times\{n-j+1\}$ (i.e., the $(n-j+1)$-th copy of $\N^m$ in $\Nn$). Then
\begin{equation}\label{usep0}
H_{\bar\beta}^{i}(d)=H_{\bar\beta}^{i,1}(d)+\cdots+H_{\bar\beta}^{i,n}(d).
\end{equation}
By the construction of $\bar\mu$, we have that 
$$
H_{\bar\mu}^{i,j}(d)=H_{\bar\mu^{(j)}}^{i}(d).
$$
Thus, if $\L_{g_j,m}^{j}< i\leq\L_{g_{j+1},m}^{j+1}$, for some $0\leq j<n$, then for $d\geq \deg\mu^{(j)}_{L_j}$ we have
\begin{equation}\label{ineq23}
0=H_{\bar\mu}^{i,0}(d)=\cdots=H_{\bar\mu}^{i,j}(d)\leq H_{\bar\mu}^{i,j+1}(d)\leq H_{\bar\mu}^{i,j+2}(d)=\cdots=H_{\bar\mu}^{i,n}(d)=\binom{m-1+d}{d}
\end{equation}
where recall that the displayed binomial equals the number of $m$-tuples of degree $d$. For the case $j=0$, we are setting $\L_{g_0,m}^{0}=0$, $\mu^{(0)}_{L_0}=(0,\dots,0)$, and $H_{\bar\mu}^{i,0}(d)=0$.  We note that the inequalities in \eqref{ineq23} will allow us to apply Lemma~\ref{techlem1} below with the $H_{\bar \alpha}^{i,j}(d)$'s in place of the $a$'s and the $H_{\bar\mu}^{i,j}(d)$'s in place of the $b$'s.

We now go back to the proof of the theorem. We proceed by induction on $i$. For the base case $i=0$, we have
$$
H_{\bar\alpha}^0(d)=n\cdot\binom{m-1+d}{d}=H_{\bar\mu}^0(d).
$$
Now assume the inequality holds for $i\geq 0$. We prove it for $i+1$. Note that, since $D_{r,\bar\alpha}\geq g_n(\L_{g_n,m}^n)+1$, the sequence $\bar\alpha$ contains at least two elements of degree at most $r$. It follows then that $H_{\bar\alpha}^1(d)\leq H_{\bar\mu}^1(d)$ and $H_{\bar\alpha}^2(d)\leq H_{\bar\mu}^2(d)$ for all $d\geq 0$. Thus, we assume $i\geq 2$. 

We have that for $d<\deg\mu_{i+1}$
$$
H_{\bar\alpha}^{i+1}(d)\leq H_{\bar\alpha}^{i}(d)\leq H_{\bar\mu}^{i}(d)=H_{\bar\mu}^{i+1}(d).
$$
Now consider the case $d=\deg\mu_{i+1}$ (note that $d>1$ since $r>0$ and $i\geq 2$). Let $0\leq j< n$ be such that $\L_{g_j,m}^{j}< i+1\leq \L_{g_{j+1},m}^{j+1}$. Note that then $d\geq \deg\mu^{(j)}_{L_j}$.

\medskip
\noindent {\bf Claim.} Either $H_{\bar\alpha}^{i+1}(d)< H_{\bar\alpha}^i(d)$ or $H_{\bar\alpha}^i(d)<H_{\bar\mu}^i(d)$.

\medskip
\noindent {\it Proof of Claim.} Towards a contradiction suppose
\begin{equation}\label{equn}
H_{\bar\alpha}^{i+1}(d)=H_{\bar\alpha}^i(d)=H_{\bar\mu}^i(d).
\end{equation}
By induction on $d$, we have $\displaystyle \sum_{k=1}^nH_{\bar\alpha}^{i,k}(d-1)\leq\sum_{k=1}^n H_{\bar\mu}^{i,k}(d-1)$. So Lemma \ref{techlem1} (which can be applied by \eqref{ineq23}) yields $\displaystyle \sum_{k=1}^nH_{\bar\alpha}^{i,k}(d-1)^{\l d-1\r}\leq\sum_{k=1}^n H_{\bar\mu}^{i,k}(d-1)^{\l d-1\r}$. Using the fact that the $\bar\mu^{(j)}$'s are all compressed, Macaulay's theorem (Theorem~\ref{Mac}) gives $\displaystyle \sum_{k=1}^n H_{\bar\mu}^{i,k}(d-1)^{\l d-1\r}=\sum_{k=1}^n H_{\bar\mu}^{i,k}(d)=H_{\bar\mu}^i(d)$. Putting these (in)equalities together, we get
$$
\sum_{k=1}^nH_{\bar\alpha}^{i,k}(d-1)^{\l d-1\r}\leq\sum_{k=1}^n H_{\bar\mu}^{i,k}(d-1)^{\l d-1\r}=H_{\bar\mu}^i(d)=H_{\bar\alpha}^i(d).
$$
This inequality and Macaulay's theorem imply that 
$$
H_{\bar\alpha}^{i,k}(d)=H_{\bar\alpha}^{i,k}(d-1)^{\l d-1\r}
$$
for $k=1,\dots,n$. These equalities, together with $H_{\bar\alpha}^{i+1}(d)=H_{\bar\alpha}^i(d)$, imply that $\deg \alpha_s\neq d$ for all $s\leq i+1$ for which $\alpha_s$ is defined. This fact and Corollary \ref{HSdeg} imply that 
$$
D_{r,\bar\alpha}\leq d=\deg\mu_{i+1}<D_{r,\bar\mu}.
$$
However, this contradicts our assumption on $D_{r,\bar\alpha}$, and so we have proven the claim.

\medskip

Hence, either 
$$
H_{\bar\alpha}^{i+1}(d)< H_{\bar\alpha}^i(d)\; \text{ or } \; H_{\bar\alpha}^i(d)< H_{\bar\mu}^i(d).
$$
Induction yields then that $H_{\bar\alpha}^{i+1}(d)< H_{\bar\mu}^i(d)$, which implies that 
$$
H_{\bar\alpha}^{i+1}(d)\leq H_{\bar\mu}^i(d)-1=H_{\bar\mu}^{i+1}(d),
$$
as desired.  

Now let $d\geq\deg \mu_{i+1}$ (note that then $d\geq \deg\mu^{(j)}_{L_j}$). By Macaulay's theorem, 
\begin{equation}\label{usep1}
H_{\bar \alpha}^{i+1,k}(d+1)\leq H_{\bar \alpha}^{i+1,k}(d)^{\langle d\rangle}
\end{equation}
and
\begin{equation}\label{usep2}
H_{\bar \mu}^{i+1,k}(d+1)=H_{\bar \mu}^{i+1,k}(d)^{\langle d\rangle}.
\end{equation}
for $k=1,\dots,n$. It then follows, by induction on $d\geq \deg\mu_{i+1}$ and using Lemma~\ref{techlem1}, that 
\begin{equation}\label{usep3}
H_{\bar \alpha}^{i+1,1}(d)^{\langle d\rangle}+\cdots+H_{\bar \alpha}^{i+1,n}(d)^{\langle d\rangle}\leq H_{\bar \mu}^{i+1,1}(d)^{\langle d\rangle}+\cdots +H_{\bar \mu}^{i+1,n}(d)^{\langle d\rangle}.
\end{equation}
Thus, putting \eqref{usep0}, \eqref{usep1}, \eqref{usep2}, and \eqref{usep3} together, we get
$$
H_{\bar \alpha}^{i+1}(d+1)\leq H_{\bar \mu}^{i+1}(d+1),
$$
and the result follows.

To prove the last statement, note that setting $i=L$ (recall $L = \L^n_{g_n,m}$) and $d=\deg\mu_L$ yields
$$
H_{\bar\alpha}^L(\deg \mu_L)\leq H_{\bar\mu}^L(\deg\mu_L)=0.
$$
Thus, for every $\beta\in\Nn$ with $\deg \beta=\deg\mu_L$ we have that $\beta\geq \alpha_j$ for some $\alpha_j \in \bar\alpha$. This implies that $D_{r,\bar\alpha}\leq \deg\mu_L +1=g_n(\L_{g_n,m}^n)+1$.
\end{proof}

We can now conclude:
\begin{theorem}\label{UpperBound2}
For all $r\geq 0$, we have
$$
C_{r,m}^n=\L_{g_n,m}^n+r-n.
$$
As a result,
$$
C_{r,m}^n=C_{C_{r,m}^{n-1},m}^1 \quad \text{ for } n\geq 2,
$$
and so, in particular,
$$
C_{r,2}^n=2^n r.
$$
\end{theorem}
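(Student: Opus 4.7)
The plan is to combine the two bounds established just above with the identity computed right before \eqref{arbn2}, then cascade into the recursive formula and finish with a short induction for $m=2$. First, the opening equality $C_{r,m}^n = \L_{g_n,m}^n + r - n$ is immediate: the preceding lemma gives $C_{r,m}^n \geq g_n(\L_{g_n,m}^n) + 1$, Theorem \ref{theforn} supplies the matching upper bound $C_{r,m}^n \leq g_n(\L_{g_n,m}^n) + 1$, and the elementary computation displayed right before \eqref{arbn2} equates $g_n(\L_{g_n,m}^n) + 1$ with $\L_{g_n,m}^n + r - n$. The degenerate situations ($r=0$ or $m=1$) are already covered by Proposition \ref{basic}.

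Next, for the recursion when $n \geq 2$, I would apply the first equality with $n-1$ in place of $n$ to obtain $r_n = \L_{g_{n-1},m}^{n-1} + r - (n-1) = C_{r,m}^{n-1}$. I then invoke the identity \eqref{tocomp}, which gives $\L_{g_n,m}^n = \L_{g_{n-1},m}^{n-1} + \L_{f_n,m}^1$, together with the key observation that the shifted function $f_n(i) = g_n\bigl(i + \L_{g_{n-1},m}^{n-1}\bigr)$ satisfies $f_n(1) = r_n$ and $f_n(i) = i + r_n - 2$ for $i \geq 2$; that is, $f_n$ is exactly $g_1$ with the parameter $r$ replaced by $r_n$. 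Applying the $n=1$ instance of the first equality (Theorem \ref{UpperBound}) to this relabeled function yields $\L_{f_n,m}^1 = C_{r_n,m}^1 - r_n + 1$, and substituting back gives
\begin{align*}
C_{r,m}^n &= \L_{g_{n-1},m}^{n-1} + \L_{f_n,m}^1 + r - n \\
&= \bigl(\L_{g_{n-1},m}^{n-1} + r - (n-1)\bigr) + C_{r_n,m}^1 - r_n \\
&= r_n + C_{r_n,m}^1 - r_n = C_{r_n,m}^1 = C_{C_{r,m}^{n-1},m}^1,
\end{align*}
which is the desired recursion.

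Finally, the explicit value $C_{r,2}^n = 2^n r$ follows by an immediate induction on $n$: Corollary \ref{specificvalues} gives the base case $C_{r,2}^1 = 2r$, and using Corollary \ref{specificvalues} once more (with $r$ replaced by $C_{r,2}^{n-1}$) together with the recursion yields $C_{r,2}^n = C_{C_{r,2}^{n-1},2}^1 = 2 C_{r,2}^{n-1} = 2 \cdot 2^{n-1} r = 2^n r$. There is no real obstacle here, since essentially all of the work was already absorbed into Theorem \ref{theforn} and the lemma preceding it; the only point that requires any care is the shift-of-function observation identifying $f_n$ with $g_1$ after rescaling the parameter, which is a direct consequence of the recursive definition of $g_n$.
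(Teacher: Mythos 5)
Your proof is correct and follows essentially the same approach as the paper: the first equality combines the lower-bound lemma with Theorem \ref{theforn} and the arithmetic preceding \eqref{arbn2}; the recursion comes from identifying $f_n$ as the function $g_1$ with parameter $r_n = C_{r,m}^{n-1}$ and applying Theorem \ref{UpperBound} together with \eqref{tocomp}; and the $m=2$ formula follows by the same short induction using Corollary \ref{specificvalues}.
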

\begin{proof}
By the discussion above, all that is left to show is the ``as a result'' clause. Note that $f_n(1)=C_{r,m}^{n-1}$ and $f(i)=i+C_{r,m}^{n-1}-2$ for $i\geq 2$. Thus, by Theorem \ref{UpperBound}, $C_{C_{r,m}^{n-1},m}^1=\L_{f_n,m}^1+C_{r,m}^{n-1}-1$. By \eqref{tocomp}, we thus have
$$
C_{r,m}^n=\L_{g_n,m}^n+r-n=\L_{f_n,m}^1+\L_{g_{n-1},m}^{n-1}+r-n=\L_{f_n,m}^1+C_{r,m}^{n-1}-1=C_{C_{r,m}^{n-1},m}^1.
$$
For the computation of $C_{r,2}^n$ we proceed by induction on $n$. The base case $n=1$ is in Corollary~\ref{specificvalues}. For the induction step we have
$$
C_{r,2}^n=C_{C_{2,m}^{n-1},m}^1=C_{2^{n-1}r,m}^1=2(2^{n-1}r)=2^nr.
$$
\end{proof}

Define the function $A_n:\N \times \N_{>0} \to \N$ by
$$
A_n(x,y) = \begin{cases} A(x,y-1) - 1 & \text{ if } n = 1 \\ A(x,A_{n-1}(x,y)-1)-1 & \text{ if } n > 1 \end{cases}
$$
where $A$ denotes the Ackermann function.  We then have the following:
\begin{corollary}\label{generalAckermannbound}
For all $r \ge 1$, we have
$$
C^n_{r,m} \le A_n(m,r).
$$
Additionally, if $r \ge 2$, then
$$
A_n(m,r-1) + 1 \le C^n_{r,m}.
$$
\end{corollary}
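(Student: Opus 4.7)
The plan is to establish both inequalities by induction on $n$, using the recursion $C^n_{r,m}=C^1_{C^{n-1}_{r,m},m}$ from Theorem~\ref{UpperBound2} together with the single-indeterminate bounds from Theorem~\ref{UpperBound} and the standard monotonicity of $A(m,\cdot)$ in its second argument.

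For the upper bound, the base case $n=1$ is exactly the inequality $C^1_{r,m}\le A(m,r-1)-1=A_1(m,r)$ supplied by Theorem~\ref{UpperBound}. For $n>1$, set $s:=C^{n-1}_{r,m}$; since $s\ge r\ge 1$ (as $D_{r,\bar\alpha}\ge r$ always), Theorem~\ref{UpperBound} gives
\[
C^n_{r,m}=C^1_{s,m}\le A(m,s-1)-1.
\]
The inductive hypothesis provides $s\le A_{n-1}(m,r)$, and the monotonicity of $A(m,\cdot)$ upgrades this to $A(m,s-1)\le A(m,A_{n-1}(m,r)-1)$. Combining the two yields $C^n_{r,m}\le A(m,A_{n-1}(m,r)-1)-1=A_n(m,r)$, as required.

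For the lower bound (assuming $r\ge 2$), the case $n=1$ is again immediate from Theorem~\ref{UpperBound}: $C^1_{r,m}\ge A(m,r-2)=A_1(m,r-1)+1$. For the inductive step, note that $s=C^{n-1}_{r,m}\ge r\ge 2$, so Theorem~\ref{UpperBound} applies to give $C^1_{s,m}\ge A(m,s-2)$. The inductive hypothesis $s\ge A_{n-1}(m,r-1)+1$, combined with the monotonicity of $A(m,\cdot)$, then yields
\[
A(m,s-2)\ge A(m,A_{n-1}(m,r-1)-1)=A_n(m,r-1)+1,
\]
completing the induction.

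There is no real obstacle beyond bookkeeping: one must check that every quantity entering an Ackermann argument is a nonnegative integer. This reduces to the easily verified facts that $A_{n-1}(m,r)\ge 1$ for $r\ge 1$ and that $C^n_{r,m}\ge r$ always (the latter being built into the definition of $D_{r,\bar\alpha}$). All remaining steps are formal applications of the two headline theorems together with the elementary monotonicity $A(x,y+1)\ge A(x,y)$.
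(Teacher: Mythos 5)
Your proof is correct and follows essentially the same route as the paper's: both establish the two inequalities by induction on $n$, with base case given by Theorem~\ref{UpperBound}, and the inductive step obtained by combining the recursion $C^n_{r,m}=C^1_{C^{n-1}_{r,m},m}$ from Theorem~\ref{UpperBound2} with the single-indeterminate bounds and the monotonicity of $A(m,\cdot)$. The only difference is that you make explicit the minor bookkeeping point that $C^{n-1}_{r,m}\ge r$, which the paper leaves implicit.
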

\begin{proof}
We prove both inequalities by induction on $n$.  The base case $n=1$ is given by Theorem \ref{UpperBound}.  Now suppose both inequalities are true for $n-1$.  Then, by induction and Theorems \ref{UpperBound} and \ref{UpperBound2}, we get
$$
C^n_{r,m} = C^1_{C^{n-1}_{r,m},m} \le A(m,C^{n-1}_{r,m} - 1) -1 \le A(m,A_{n-1}(m,r)-1) - 1 = A_n(m,r)
$$
and, if $r \ge 2$,
$$
C^n_{r,m} = C^1_{C^{n-1}_{r,m},m} \ge A(m,C^{n-1}_{r,m} - 2) \ge A(m,A_{n-1}(m,r-1)-1) = A_n(m,r-1) + 1.
$$
\end{proof}

\section{Some applications}\label{applications}

In this section we present several applications using the upper bound $C_{r,m}^n$ of $T_{r,m}^n$. We assume some familiarity with the differential ring of differential polynomials and with the notion of a characteristic set of a differential ideal. For the unfamiliar reader we suggest Chapters I and IV of  \citep{Kolchin}. 

Throughout this section we fix an $n$-tuple of differential indeterminates $x=(x_1,\dots,x_n)$.  For any set of differential polynomials $f_1,\dots,f_s\in K\{x\}$ over the differential field $(K,\partial_1,\dots,\partial_m)$, we let 
$$
(f_1,\dots,f_s) \; \text{ and } \; [f_1,\dots,f_s]
$$
denote the ideal and the differential ideal generated by the $f_i$'s, respectively.

In our first application we prove that if $S$ is a collection of differential polynomials in $n$ indeterminates of order at most $r$, then each minimal prime differential ideal containing $S$ has a characteristic set whose elements have order at most $C_{r,m}^n$. Next, we show that if $V \subseteq \mathbb{A}^n$ is a differential algebraic variety defined by differential polynomials of order at most $r$, then every finite order irreducible component of $V$ has order at most $n\cdot (C^n_{r,m})^m$.

The final two applications deal with classical problems in effective differential algebra, namely, the effective differential Nullstellensatz \citep{GKO} and the effective computation of B\'ezout-type estimates for differential algebraic varieties \citep{FreLe}. The computation of the appropriate bound in each of these problems depends on $T_{r,m}^n$ as it relies on an algebro-geometric characterization of differentially closed fields (see \citep[Proposition 4.1]{FreLe}), which is essentially a geometric translation of the definition of $T_{r,m}^n$.

\subsection{Order bounds for characteristic sets} \

Here the notion of characteristic set will be with respect to the canonical orderly ranking on the set of algebraic indeterminates $\{\partial^\xi x_i: (\xi,i)\in \Nn\}$. Characteristic sets of prime differential ideals can be computed by means of several differential-algebraic algorithms (for instance, from various modifications of the Rosenfeld-Gr\"obner algorithm). It is thus important to compute good estimates for the order of elements of a characteristic set in terms of the order of the original differential polynomials. The first attempt at this problem was made in 1956 by Seidenberg \citep[\S14]{Seidenberg} (where a solution was only suggested). Recently an explicit upper bound was found; in \citep[Proposition 1]{Kondratieva} it is stated that an upper bound for the order of the elements of a characteristic set of a prime differential ideal $P$ is
$$
A(m+7, \max(9,n,2^{9r},d)-1)
$$
where $A$ is the Ackermann function, and $r$ and $d$ bound the order and degree, respectively, of a set of radical differential generators of $P$. Our results yield an improvement of the above bound. Moreover, the bound presented here does \emph{not} depend on the degrees of the given collection of differential polynomials. It is worth pointing out that even the existence of such a bound, with no assumption on the degrees, seems to be a new result. 

Recall that by Corollary \ref{generalAckermannbound},
$$
C_{r,m}^n\leq A_n(m,r)
$$
where $A_1(x,y)=A(x,y-1)-1$ and $A_n(x,y)=A(x,A_{n-1}(x,y)-1)-1$ for $n>1$.

\begin{proposition}\label{ordchar}
Suppose $S$ is a collection of differential polynomials in $K\{x\}$ of order at most $r$. Then, each minimal prime differential ideal $P$ containing $S$
has a characteristic set whose elements have order at most $C_{r,m}^n$.
\end{proposition}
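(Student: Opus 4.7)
The plan is to show that every ``minimal leader'' (in the sense of Section~\ref{diffkerpre}, allowing $r=\infty$) of the extension $K\langle g\rangle/K$ generated by a generic zero $g=(g_1,\dots,g_n)$ of $P$ has degree at most $C_{r,m}^n$. This suffices because the leaders of the elements of a characteristic set of $P$ with respect to the canonical orderly ranking $\unlhd$ coincide with the minimal leaders of $K\langle g\rangle$, and the order of such an element equals the degree of its leader. So fix such a $g$, and suppose for contradiction that $(\tau,l)$ is a minimal leader of $K\langle g\rangle$ with $\deg\tau>C_{r,m}^n$.

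For each $s\geq 0$ set $L_s:=K(\partial^\xi g_i:(\xi,i)\in\Gamma(s))$. Each $L_s$ is a differential kernel of length $s$, and $L_{C_{r,m}^n}$ is a prolongation of $L_r$ of length $C_{r,m}^n$. Applying Theorem~\ref{construct} to $L_r$, we obtain an integer $h$ with $r\leq h\leq C_{r,m}^n$ such that the differential kernel $L_h$ admits a principal realization $f=(f_1,\dots,f_n)$. By the characterization of principal realizations noted after the definition, the minimal leaders of $K\langle f\rangle$ coincide with those of $L_h$, hence all have degree at most $h\leq C_{r,m}^n$.

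The key step is to argue that $f$ is also a generic zero of $P$, which will force $K\langle f\rangle$ and $K\langle g\rangle$ to be $K$-isomorphic as differential fields and yield the contradiction. On the one hand, $f$ being a regular realization of $L_h$ means that $(\partial^\xi f_i:(\xi,i)\in\Gamma(h))$ is a generic specialization of $(\partial^\xi g_i:(\xi,i)\in\Gamma(h))$ over $K$; since every element of $S$ has order at most $r\leq h$, this forces $f$ to satisfy $S$, so the prime differential ideal $I_f:=\{Q\in K\{x\}:Q(f)=0\}$ contains $S$. On the other hand, $g$ itself is a regular realization of $L_h$, so by Lemma~\ref{diffspec} it is a differential specialization of $f$, giving $I_f\subseteq I_g=P$. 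Minimality of $P$ among prime differential ideals containing $S$ then forces $I_f=P$, so $K\langle f\rangle\cong K\langle g\rangle$ over $K$ as differential fields. Since differentially $K$-isomorphic extensions have the same minimal leaders, this contradicts the existence of the minimal leader $(\tau,l)$ of $K\langle g\rangle$ with $\deg\tau>C_{r,m}^n$.

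The main obstacle is the preliminary identification of leaders of a characteristic set of $P$ with minimal leaders of $K\langle g\rangle$, which is the bridge between the ideal-theoretic content of the proposition and the kernel/realization machinery of Section~\ref{expri}. Once this dictionary is in hand, the rest of the argument is a clean combination of Theorem~\ref{construct}, Lemma~\ref{diffspec}, and the minimality of $P$.
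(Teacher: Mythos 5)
Your proof is correct and takes essentially the same approach as the paper: take a differential generic point $g$ of $P$, apply Theorem~\ref{construct} to extract a principal realization $f$ of some intermediate kernel $L_h$, use Lemma~\ref{diffspec} and the minimality of $P$ to conclude that $f$ and $g$ generate differentially $K$-isomorphic extensions, and then read off the order bound from the fact that $f$ is a principal realization. The paper phrases the minimality step geometrically (the irreducible variety $Z$ with generic point $f$ sits inside the variety of $S$ and contains $g$, hence equals the component $V$), whereas you phrase it ideal-theoretically ($S\subseteq I_f\subseteq I_g=P$ forces $I_f=P$), but these are the same argument.
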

\begin{proof}
Let $(\mathbb U, \partial_1,\dots,\partial_m)$ be a \emph{universal} differential field extension of $K$; that is, every irreducible differential algebraic variety over $K$ has a differential generic point in $\mathbb U$. Let 
$$W=\{u\in \mathbb U^n: f(u)=0 \text{ for all } f\in S\}$$ 
and  
$$V=\{u\in\mathbb U^n: f(u)=0 \text{ for all } f\in P\}.$$ 
Then $V$ is an irreducible differential algebraic variety over $K$ which is a component of $W$. Let $a=(a_1,\dots,a_n)$ be a differential generic point of $V$ over $K$, and set $a_i^{\xi}=\partial^\xi a_i$ for all $(\xi,i)\in \Nn$. Then the differential kernel $K(a_i^\xi:(\xi,i)\in \Gamma(r))$ has a prolongation of length $C_{r,m}^n$, namely $K(a_i^\xi:(\xi,i)\in \Gamma(C_{r,m}^n))$.

By Theorem~\ref{construct}, there is $r\leq h\leq C_{r,m}^n $ such that the differential kernel $L:=K(a_i^\xi: (\xi,i)\in \Gamma(h))$ has a principal realization, call it $b=(b_1,\dots,b_n)$. By universality of $\mathbb U$ over $K$ we can assume that $b$ is a tuple from $\mathbb U$. Since $(\partial^\xi b_i:(\xi,i)\in \Gamma(h))$ is a generic specialization of $(\partial^\xi a_i:(\xi,i)\in \Gamma(h))$ (in the algebraic sense) and the order of the elements of $S$ is bounded by $r\leq h$, the tuple $b$ is in $W$. On the other hand, Lemma \ref{diffspec} shows that the tuple $a$ is a differential specialization of $b$. Thus, the tuple $a$ is in the irreducible differential variety $Z\subseteq W$ which has $b$ as a differential generic point over $K$. But, since $V$ is a component of $W$, this implies that $Z=V$. We have thus shown that $b$ is a differential generic point of $V$. Thus, if a characteristic set of $P$ had an element of order larger than $C_{r,m}^n\geq h$ then the differential field $K\l b\r$ would 
have a minimal leader of degree larger than $h$, contradicting the fact that $b$ is a principal realization of $L$ (recall that this means that all the minimal leaders of $K\l b\r$ are contained in $L$).
\end{proof}

As we have seen in previous sections, for small values of $m$ this upper bound is computationally feasible. For instance, this establishes that for the case of two derivations ($m=2$) such characteristic sets have order at most $2^n r$ (before there was no accessible bound for $m\geq 2$).

\begin{remark}
In the case of linear differential equations (i.e., when all the differential polynomials in $S$ have degree one), one can produce a double-exponential bound for the order of the elements of a characteristic set of each prime component of $S$. This can be achieved using results of Chistov and Grigoriev. Indeed, as the differential polynomials are linear, $S$ generates a prime differential ideal $P$, any characteristic set of which is linear; then the K\"ahler differential $d$ maps a characteristic set of $P$ to a Gr\"obner basis of the D-module $d(P)$ and the orders are preserved. At this point one can apply the double-exponential bounds for Gr\"obner bases obtained in \citep{CG} to obtain similar bounds for the order of the elements of the characteristic set\footnote{We thank an anonymous referee for pointing out this strategy for the linear case.}. We leave the details of this argument to the interested reader.
\end{remark}

\newpage
\subsection{On the order of a differential algebraic variety} \

Given an (affine) differential algebraic variety $V\subseteq \mathbb A^n$ defined over $K$, we let $\mathcal I(V)$ denote its defining differential ideal; that is, 
$$
\mathcal I(V):=\{f\in K\{x\}: f(V)=0\}.
$$
Recall that the differential coordinate ring of $V$ is defined as
$$
K\{V\}:=K\{x\}/\mathcal I(V),
$$
and, if $V$ is irreducible, then the differential function field is $K\langle V \rangle:=\operatorname{Frac}(K\{V\})$. The \emph{order} of $V$ is
$$
\ord(V) = \operatorname{trdeg}_K(K\langle V\rangle).
$$
Note that in the ordinary case, $m=1$, an irreducible differential algebraic variety $V$ has finite order if and only it is of (differential) dimension zero.

Now, let $V$ be an arbitrary (not necessarily irreducible) differential algebraic variety over $K$, and $W$ be a $K$-irreducible component of $V$. The following question arises: {\it If $W$ is of finite order, can we find an upper bound for $\ord(W)$ in terms of data explicitly obtained from $V$?}

The purpose of this section is to present a positive answer to the above question. More precisely,

\begin{proposition}\label{mainprop}
Suppose $V\subseteq \mathbb A^n$ is a differential algebraic variety defined by differential polynomials over $K$ of order at most $r$. If $W$ is a $K$-irreducible component of $V$ of finite order, then 
$$
\ord(W) \leq n\cdot (C_{r,m}^n)^m.
$$
\end{proposition}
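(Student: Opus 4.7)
The plan is to translate the order of $W$ into a combinatorial count of derivatives that are transcendence-independent, and then to apply Proposition \ref{ordchar} to bound the set of ``leaders,'' hence also the complementary set of ``non-leaders.''

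First, I take $a=(a_1,\dots,a_n)$ to be a differential generic point of $W$ over $K$ inside a universal differential field extension of $K$, so that $\mathcal I(W)$ is the defining differential ideal of $a$ and $K\langle W\rangle=K\langle a\rangle$. By Proposition \ref{ordchar}, $\mathcal I(W)$ admits a characteristic set (with respect to the canonical orderly ranking) all of whose elements have order at most $C:=C_{r,m}^n$; equivalently, every minimal leader $(\eta,i)\in\N^m\times\n$ of the differential field $K\langle a\rangle$ satisfies $\deg\eta\leq C$.

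Next, I recall the standard fact that, with respect to an orderly ranking, the set
$$
T\;=\;\{(\xi,i)\in\N^m\times\n : (\xi,i)\not\geq (\eta,j)\text{ for every minimal leader }(\eta,j)\text{ of }K\langle a\rangle\}
$$
indexes a transcendence basis $\{\partial^\xi a_i:(\xi,i)\in T\}$ of $K\langle a\rangle$ over $K$. Thus $\ord(W)=|T|$. Partitioning $T=\bigsqcup_{i=1}^n T_i$ with $T_i=\{\xi\in\N^m:(\xi,i)\in T\}$, the problem reduces to showing $|T_i|\leq C^m$ for each $i$.

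Fix $i$. Since $\ord(W)<\infty$, the set $T_i$ is finite, so its complement $U_i\subseteq \N^m$ (an upper set with respect to $\leq$) is cofinite. A basic combinatorial fact about upper sets in $\N^m$ is that $U_i$ is cofinite if and only if, for every axis direction $k\in\{1,\dots,m\}$, there exists a minimal leader of the form $(c_k\mathbf{k},i)$ with $c_k\geq 1$ (otherwise the infinite set $\{c\mathbf{k}:c\in\N\}$ would lie entirely in $T_i$). Because this leader has $\deg(c_k\mathbf{k})=c_k$, Proposition \ref{ordchar} gives $c_k\leq C$. Any $\xi\in T_i$ must satisfy $\xi_k<c_k$ for every $k$ (otherwise $\xi\geq c_k\mathbf{k}\geq$ a minimal leader), so
$$
T_i\;\subseteq\;\prod_{k=1}^m\{0,1,\dots,c_k-1\}\;\subseteq\;\{0,1,\dots,C-1\}^m,
$$
and consequently $|T_i|\leq\prod_k c_k\leq C^m$. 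Summing over $i$ yields $\ord(W)=\sum_{i=1}^n|T_i|\leq n\cdot C^m=n\cdot(C_{r,m}^n)^m$.

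There is no real obstacle here beyond citing the right background facts: the identification of ``non-leader'' derivatives with a transcendence basis is standard for characteristic sets under an orderly ranking, and the only genuine work is the elementary combinatorial observation about cofinite upper sets in $\N^m$ in the third paragraph. The strength of the bound comes entirely from the degree bound on characteristic sets provided by Proposition \ref{ordchar}.
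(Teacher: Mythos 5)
Your proof is correct and takes essentially the same approach as the paper: bound the degrees of characteristic-set leaders via Proposition~\ref{ordchar}, observe that finite order forces a leader on each coordinate axis of each copy of $\N^m$, and conclude the non-leader set is contained in a box of side at most $C_{r,m}^n$. The only cosmetic difference is that you phrase the step $\ord(W)=|T|$ via the standard transcendence-basis characterization of non-leaders, whereas the paper invokes Kolchin's decomposition $\omega_{\mathcal P}=\sum_i\omega_{E_i}$ (Fact~\ref{usen1}) to the same effect.
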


\begin{remark}
In the ordinary case, $m=1$, the upper bound of Proposition \ref{mainprop} reduces to $nr$, which appears in Ritt's book \citep[Chapter 6]{Ritt}. However, up until now, no upper bound had been computed for $m>1$.
\end{remark}

We will make use of the following fact due to Kolchin. We work, as in the previous section, with the canonical orderly ranking on the set of algebraic indeterminates, and we assume familiarity with the notion of the Kolchin polynomial of a prime differential ideal. Recall that if $E$ is any subset of $\mathbb N^m$ and $U_E$ is the set of elements of $\mathbb N^m$ that are not greater than or equal to any element of $E$ (with respect to the product order of $\mathbb N^m$), then there exists a numerical polynomial $\omega_E$ such that for sufficiently large $t$ the number of elements $(u_1,\dots,u_m)\in U_E$ with $\sum u_i\leq t$ is equal to $\omega_E(t)$. 

\begin{fact}[\citep{Kolchin},\S II.12]\label{usen1}
If $\mathcal P$ is a prime differential ideal of $K\{x_1\dots,x_n\}$, $\Lambda$ is a characteristic set of $\mathcal P$, and we let $E_i$ denote the set of all $\xi\in\mathbb N^m$ such that $\partial^\xi x_i$ is a leader of $\Lambda$, then
$$
\omega_{\mathcal P}=\sum_{i=1}^n \omega_{E_i}
$$
where $\omega_{\mathcal P}$ denotes the Kolchin polynomial of $\mathcal P$.
\end{fact}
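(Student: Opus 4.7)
The plan is to reduce the statement to a purely transcendence-theoretic identity by exploiting the structural properties of a characteristic set with respect to the canonical orderly ranking. First I will recall that, by the definition of the Kolchin polynomial, for all sufficiently large $t$ one has
$$
\omega_{\mathcal P}(t)=\operatorname{trdeg}_K K\bigl(\dd^\xi x_i+\mathcal P:(\xi,i)\in\Nn,\;\deg\xi\le t\bigr).
$$
Similarly, $\omega_{E_i}(t)$ counts, for $t$ large, the number of $\xi\in\N^m$ with $\deg\xi\le t$ that do not dominate any element of $E_i$. Thus it suffices to produce, for each large $t$, a transcendence basis of the quotient field above of cardinality $\sum_{i=1}^n|\{\xi\in U_{E_i}:\deg\xi\le t\}|$.

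The natural candidate is the set
$$
\mathcal B_t:=\bigcup_{i=1}^n\bigl\{\dd^\xi x_i+\mathcal P:\xi\in U_{E_i},\;\deg\xi\le t\bigr\}.
$$
I will verify two claims about $\mathcal B_t$. First, every $\dd^\xi x_i$ with $\xi\notin U_{E_i}$ is, modulo $\mathcal P$, algebraic over the parametric indeterminates ranked $\lhd \dd^\xi x_i$; indeed, some $\eta\in E_i$ satisfies $\eta\le\xi$, so $\dd^\xi x_i$ is a derivative of the leader $\dd^\eta x_i$ of some element $f\in\Lambda$, and the standard differentiation formula expresses $\dd^\xi x_i$ modulo $\mathcal P$ as a rational function of $\dd^\eta x_i$ and strictly lower-ranked derivatives (using that the initial and separant of $f$ lie outside $\mathcal P$ by the characteristic-set property). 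Iterating this reduction shows that the field generated by $\{\dd^\xi x_i+\mathcal P:\deg\xi\le t\}$ coincides with the field generated by $\mathcal B_t$.

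Second, I will show that $\mathcal B_t$ is algebraically independent modulo $\mathcal P$. Suppose for contradiction it is not; then there is a nonzero polynomial $g\in K[y_{(\xi,i)}:\xi\in U_{E_i},\;\deg\xi\le t]$ with $g\in\mathcal P$. Since $g$ involves only parametric indeterminates (none of which is a derivative of any leader), $g$ is \emph{partially reduced} with respect to $\Lambda$, and moreover its leader (with respect to $\unlhd$) is strictly lower than any leader of $\Lambda$. This contradicts the minimality of the characteristic set: by the fundamental property that any element of $\mathcal P$ partially reduced with respect to $\Lambda$ must reduce to zero via $\Lambda$, combined with the fact that $g$ cannot be further reduced since its variables are parametric, we would obtain $g=0$.

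Combining the two claims yields $\omega_{\mathcal P}(t)=|\mathcal B_t|=\sum_{i=1}^n|\{\xi\in U_{E_i}:\deg\xi\le t\}|=\sum_{i=1}^n\omega_{E_i}(t)$ for all sufficiently large $t$. Since two numerical polynomials agreeing on a cofinite set of integers are identical, the displayed identity $\omega_{\mathcal P}=\sum_i\omega_{E_i}$ follows. The main obstacle will be the second claim: establishing algebraic independence cleanly requires carefully invoking the reduction/autoreduction properties of characteristic sets (including the behavior of initials and separants) to rule out the existence of a parametric algebraic relation inside $\mathcal P$.
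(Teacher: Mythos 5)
The paper offers no proof of this statement at all: it is imported as a Fact with a citation to Kolchin, \S II.12, so there is no internal argument to compare against. Your proof is essentially the classical one from the cited source, and it is correct in substance: principal derivatives (those $\partial^\xi x_i$ with $\xi$ dominating some element of $E_i$) are algebraic modulo $\mathcal P$ over lower-ranked derivatives because the initials and separants of $\Lambda$ lie outside $\mathcal P$, the parametric derivatives are algebraically independent modulo $\mathcal P$ because a nonzero relation among them would be a nonzero element of $\mathcal P$ reduced with respect to $\Lambda$, and counting parametric derivatives of degree at most $t$ gives $\sum_{i}\omega_{E_i}(t)$ for large $t$.

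Two corrections, neither fatal. First, the field generated modulo $\mathcal P$ by all derivatives of degree at most $t$ does not in general \emph{coincide} with the field generated by $\mathcal B_t$: when $\xi$ equals a leader $\eta\in E_i$, the element $f\in\Lambda$ only makes $\partial^\eta x_i$ \emph{algebraic} over the lower-ranked derivatives (it need not be a rational function of them, since $f$ is not linear in its leader). The statement your reduction actually establishes, and which suffices for the count, is that this field is an algebraic extension of $K(\mathcal B_t)$, so the two have the same transcendence degree over $K$. Second, your ``moreover'' claim that the leader of $g$ is strictly lower than every leader of $\Lambda$ is false for the orderly ranking: a parametric derivative can outrank every leader of $\Lambda$ (for instance any sufficiently high derivative of an indeterminate $x_j$ with $E_j=\emptyset$). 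Fortunately this claim is not needed: since $g$ involves no leader of $\Lambda$ nor any proper derivative of one, it is reduced with respect to $\Lambda$, and the characteristic-set property that no nonzero element of $\mathcal P$ is reduced with respect to $\Lambda$ already forces $g=0$, which is the desired contradiction.
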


\begin{proof}[Proof of Proposition \ref{mainprop}]
Note that, since $W$ is of finite order, $\omega_{\mathcal P}$ is a constant numerical polynomial equal to $\ord(W)$. Let $\mathcal S$ be a collection of differential polynomials over $K$ defining $V$ whose elements have order at most $r$. We also let $\mathcal P:=\mathcal I(W)$, $\Lambda$ be a characteristic set for $\mathcal P$, and $E_i$ be as in Fact \ref{usen1}.  By Fact \ref{usen1}, it suffices to show that 
$$
\omega_{E_i}\leq (C_{r,m}^n)^m
$$ 
for all $i=1,\dots,n$.  

Note that, again by Fact \ref{usen1}, each $\omega_{E_i}$ is constant. This implies that $U_{E_i}$ is finite (recall that $U_{E_i}$ is the set of elements of $\mathbb N^m$ that are not greater than or equal to any element of $E_i$). Therefore, $\omega_{E_i}=|U_{E_i}|$. Let $\xi=(u_1,\dots,u_m)\in U_{E_i}$; we claim that $u_j < C_{r,m}^n$ for all $j$. Recall that for each $j=1,\dots,m$, we let $\bf j$ denote the $m$-tuple with a $1$ in the $j$-th entry and zeros elsewhere, and note that, since $U_{E_i}$ is finite, there is a sufficiently large $t$ such that $t{\bf j}$ is greater than or equal to some element $\eta$ of $E_i$. By Proposition~\ref{ordchar}, the elements of a characteristic set of $\mathcal P$ have order at most $C_{r,m}^n$. So, by definition of $E_i$, the $m$-tuple $\eta$ must be of the form $s_j{\bf j}$ with $s_j\leq C_{r,m}^n$. Hence, if $u_j\geq C_{r,m}^n$, we would get that $\xi$ is greater than or equal to $s_j{\bf j}=\eta$, which is impossible. We have thus shown that 
$$
U_{E_i}\subseteq \Omega (C_{r,m}^n)
$$
where $\Omega(s):=\{(v_1,\dots,v_m)\in \mathbb N^m: v_j<s \text{ for all }j\}$ for $s\in\mathbb N$. The latter set clearly has cardinality $s^m$. This yields
$$
|U_{E_i}|\leq |\Omega(C_{r,m}^n)|=(C_{r,m}^n)^m.
$$
as desired.
\end{proof}

\subsection{Effective differential Nullstellensatz} \

Given a system of differential polynomial equations $f_1 = 0, \dots, f_s = 0$, the differential Nullstellensatz states that this system is consistent if and only if 1 cannot be written as a combination of the $f_i$'s and their derivatives.  Making this precise, given a differential field $(K,\partial_1,\dots,\partial_m)$ and a collection of differential polynomials $F = \{f_1,\dots,f_s\} \subseteq K\{x\}$ with $x=(x_1,\dots,x_n)$, the (weak) differential Nullstellensatz states that the system $F = 0$ has a solution in some differential field extension of $K$ if and only if $1 \notin [F]$.  

For this result to be practical, one needs to determine how many times the $f_i$'s must be differentiated.  To this end, for any integer $b\geq 0$, we define 
$$
F^{(b)} := \{\partial^{\xi} f : f \in F \text{ and } \deg\xi\leq b\},
$$
the collection of all derivatives of elements of $F$ up to order $b$.  Then, the \emph{effective differential Nullstellensatz} is concerned with the problem of finding the smallest nonnegative integer $B(m,n,r,d)$ such that for any system of differential polynomials $F \subseteq K\{x\}$, of total order $r$ and degree $d$, we have 
$$
1 \in [F] \; \text{ if and only if } \;1 \in \left(F^{(B(m,n,h,d))}\right).
$$

It has been shown that $B(m,n,r,d)$ is at least exponential in $n$ and double-exponential in $m$ (see \citep{GKO, Sadik}).  The most recent upper bound, given in \citep{GKO}, depends on 
$$
T := \begin{cases} r+1 & \text{ if } m = 1 \\ T^n_{r,m} & \text{ if } m > 1.\end{cases}
$$
The reason one needs $T=r+1$ when $m=1$ instead of $T^n_{r,1} = r$ is due to a technical aspect of the proof of the upper bound.  The upper bound is given by
$$
B(m,n,r,d) \le (n \alpha_{T-1}d)^{2^{O\left(n^3\alpha_T^3\right)}},
$$
where, for any $\ell$, $\alpha_\ell = \binom{\ell+m}{m}$.  While $T$ grows in terms of the Ackermann function, for small numbers of derivations $m$ and differential indeterminates $n$ this bound produces practical results.  For example, when $m=1$, then $T = r+1$, so in this case
$$
B(1,n,r,d) \le (n(r+1)d)^{2^{O\left(n^3(r+2)^3\right)}}.
$$
When $m=2$, then $T = T^n_{r,m} \le 2^nr$, so in this case
$$
B(2,n,r,d) \le \left(n2^{n-1}r\left(2^nr+1\right)d\right)^{2^{O\left(n^32^{6n}r^6\right)}}.
$$

\subsection{B\'ezout-type estimates} \

In differential algebraic geometry, B\'ezout-type estimates refer to the process of computing an upper bound for the degree of the Zariski closure of an affine differential algebraic variety in terms of the orders and degrees of the defining differential polynomials. In \citep{FreLe}, such a bound was computed, and it is given in terms of $T^n_{r,m}$. For the rest of this section we assume that our differential field $(K,\partial_1,\dots,\partial_m)$ is differentially closed. Recall that $x = (x_1,\dots,x_n)$ is an $n$-tuple of differential indeterminates.

For each integer $r \geq 0$, define $\nabla_r:K^n \to K^{n \cdot \alpha_r}$ by
$$
\nabla_r(x) = (\partial^\xi x_i: (\xi,i)\in \Gamma(r)),
$$
where $\alpha_r:=\binom{r+m}{m}$.  Any affine differential variety $Z\subseteq K^n$ defined by differential polynomials of order at most $r$ can be written in the form
$Z = \{v \in V : \nabla_r(v) \in W\}$
for some affine algebraic varieties $V \subseteq K^n$ and $W \subseteq K^{n \cdot \alpha_r}$. Let $\bar Z$ be the Zariski closure of $Z$ in $K^n$. Recall that, for an irreducible variety $U \subseteq K^n$, the {\it degree} of $U$ is given by
\begin{align*}
\deg U = \max \{|U \cap H_1 \cap \cdots \cap H_d| : &\text{ each } H_i \subseteq K^n \text{ is a hyperplane} \\
& \text{ and the intersection is finite }\},
\end{align*}
and if $U$ is not irreducible, then its degree is the sum of the degrees of its irreducible components.  In \citep[\S4]{FreLe} an upper bound for $\deg(\bar Z)$ was established, having the form
$$
\deg(\bar Z) \leq (\deg V)^{\alpha_{r-1}\cdot \alpha_{T'}(m+1)^{d'\alpha_{T'-1}-1}}(\deg W)^{\alpha_{T'-1}\cdot \frac{(m+1)^{d'\alpha_{T'-1}}-1}{m}},
$$
where $d' = \alpha_{r-1}\cdot \dim V$ and $T' = T^{n \cdot \alpha_{r-1}}_{1,m}$.  

Due to our results, one can compute some explicit values for small values of $m$. For example, when $m=1$, then $T' = T^{nr}_{1,1} = 1$, so we get
$$
\deg(\bar Z) \le (\deg V)^{r2^{r\cdot\dim V}}\left(\deg W\right)^{2^{r\cdot \dim V}-1}.
$$
When $m=2$, then $T' = T^{n \cdot \alpha_{r-1}}_{1,2} \le 2^{\frac{nr(r+1)}{2}}$, so the upper bound for $\deg(\bar Z)$ is triple-exponential in $n$ and $r$.

\bibliographystyle{elsarticle-harv}

\end{document}